\documentclass[aos,noinfoline]{imsart}
\usepackage[ngerman,english]{babel}
\usepackage{amsthm}
\usepackage{bbm}
\usepackage{hyperref}
\usepackage{amssymb,amsmath}
\usepackage{natbib} 
\setattribute{aos}{aos}{My Journal}

\hypersetup{
  colorlinks   = true, 
  urlcolor     = blue, 
  linkcolor    = blue, 
  citecolor   = blue 
}
\newtheorem{satz}{Satz}[section]

\newtheorem{theorem}[satz]{Theorem}
\newtheorem{proposition}[satz]{Proposition}

\newtheorem{lemma}[satz]{Lemma}

\theoremstyle{definition}
\newtheorem{definition}[satz]{Definition}

\newtheorem{remark}[satz]{Remark}

\newtheorem{example}[satz]{Example}

\newtheoremstyle{nospace}
  {\topsep}
  {\topsep}
  {\itshape}
  {}
  {\bfseries}
  {}
  {0em}
  {}
\theoremstyle{nospace}
\newtheorem{assumption}[satz]{Assumption}

\newcommand{\1}{\mathbbm{1}}

\newcommand{\cov}{\text{Cov}}

\newcommand{\diag}{\text{diag}}

\newcommand{\E}{\mathbb E}
\newcommand{\ExpF}{\mathcal F_n}

\newcommand{\ExpG}{\mathcal G_n}
\newcommand{\ExpGC}{\mathcal G^c_n}
\newcommand{\ExpFS}{\mathcal F^s_n}
\newcommand{\ExpM}{\mathcal M_n}
\newcommand{\ExpMC}{\mathcal M^c_n}
\newcommand{\ExpMCTwo}{\mathcal M^{c'}_n}
\newcommand{\ExpMS}{\mathcal M^s_n}
\newcommand{\ExpMSloc}{\mathcal M^s_{n,\text{loc}}}
\newcommand{\ExpMSTwoloc}{\mathcal M^{s'}_{n,\text{loc}}}
\newcommand{\ExpMSS}{\mathcal S_n}
\newcommand{\ExpMSSTwo}{\mathcal S'_n}
\newcommand{\ExpMSSloc}{\mathcal S_{n,\text{loc}}}
\newcommand{\ExpMSSTwoloc}{\mathcal S'_{n,\text{loc}}}
\newcommand{\EEE}{\mathcal E}
\newcommand{\FFF}{\mathcal F}

\newcommand{\GGG}{\mathcal G}
\newcommand{\HHH}{\mathcal H}

\newcommand{\HBsym}{H^{\beta}_{\text{sym}}}
\newcommand{\Id}{\text{Id}}
\newcommand{\I}{\text{I}}
\newcommand{\III}{\mathcal I}

\newcommand{\IIn}{\mathcal I_n}

\newcommand{\IInp}{\mathcal I_{np}}
\newcommand{\IItr}{\mathcal I_{\pi_n}}
\newcommand{\Ltwosym}{L^2_{\text{sym}}}
\newcommand{\LLL}{\mathcal L}
\newcommand{\loc}{\text{loc}}

\newcommand{\NNN}{\mathcal N}
\newcommand{\N}{\mathbb N}
\newcommand{\OOO}{\mathcal O}

\newcommand{\R}{\mathbb R}
\newcommand{\Rddsym}{\R^{d\times d}_{\text{sym}}}

\newcommand{\Snon}{\Theta_1}
\newcommand{\Spar}{\Theta_0}

\newcommand{\tr}{\text{tr}}
\newcommand{\var}{\text{Var}}
\newcommand{\vc}{\text{vec}}
\newcommand{\XXX}{\mathcal X}

\newcommand{\ZZZ}{\mathcal Z}

\numberwithin{equation}{section}

\begin{document}

\begin{frontmatter}
\title{Asymptotic efficiency for covariance estimation under noise and asynchronicity}
\runtitle{Asymptotic efficiency for covariance estimation}
\begin{aug}
  \author{\fnms{Sebastian}  \snm{Holtz}\corref{}\thanksref{t2}\ead[label=e1]{holtz@math.hu-berlin.de}}

  \thankstext{t2}{Supported by the Deutsche Forschungsgemeinschaft via IRTG 1792 \textit{High Dimensional Nonstationary Time Series} and FOR 1735 \textit{Structural Inference in Statistics}.}

  \runauthor{S. Holtz}

  \affiliation{Humboldt-Universit\"at zu Berlin}

  \address{S. Holtz\\ Institut f\"ur Mathematik\\Humboldt-Universit\"at zu Berlin\\Unter den Linden 6\\10099 Berlin\\Germany\\ \printead{e1}}

\end{aug}

\begin{abstract}
:\ The estimation of the covariance structure from a discretely observed multivariate Gaussian process under asynchronicity and noise is analysed under high-frequency asymptotics. Asymptotic lower and upper bounds are established for a general Gaussian framework which provides benchmark cases for various Gaussian process models of interest. The parametric bounds give rise to infinite-dimensional convolution theorems for covariation estimation under asynchronicity, which is an essential estimation problem in finance.
\end{abstract}

\begin{keyword}[class=MSC]
\kwd[Primary ]{62G20}
\kwd{62M10}
\kwd[; secondary ]{62B15}
\end{keyword}

\begin{keyword}
\kwd{Asymptotic equivalence}
\kwd{asynchronous observations}
\kwd{Gaussian processes}
\kwd{high-frequency data}
\kwd{microstructure noise}
\kwd{semi-parametric efficiency}
\end{keyword}

\end{frontmatter}

\sloppy
\section{Introduction}
We study inference on scaling parameters of a conditionally Gaussian process under discrete noisy observations over a fixed time interval. There are still many open questions in the field of covariance estimation of Gaussian processes under high-frequency asymptotics. Existing results reveal surprising phenomena, such as unusual convergence rates and unexpected emergences of parameters in the asymptotic covariance of estimators, which calls for a better understanding of how the underlying signal process drives asymptotic quantities of interest. Particularly, the multidimensional interplay of estimation targets encumbers the understanding of central object, such as asymptotic information. Moreover, for covariance operators that depend on high-dimensional or possibly even infinite-dimensional parameters, the mathematical analysis is not trivial.

Gaussian processes constitute a versatile class with a wide range of applications. Finance marks a major field of interest in practice, where usually models driven by Brownian motions are regarded. Fractional processes yield a more controversial approach, cf. \citet{Rogers[1997]}, but are also highly relevant in, for example, geophysics and biomechanics, cf. \citet{Mandelbrot[1970]} and \citet{Bardet[2007]}. Integrated Gaussian processes are used in Physics and Biology, e.g. for modelling particles, cf. \citet{Tory[2000]}, or in the meteorological literature, cf. \citet{Boughton[1987]}. The increasing usage of sophisticated Gaussian processes, such as multifractional Brownian motions, cf. \citet{Bianchi[2013]}, calls for a general understanding of lower and upper bounds, at least for benchmark cases.

As mentioned conditionally Gaussian models play a major role in finance, where inference is commonly performed conditionally on the underlying volatility process, cf. \citet{Mykland[2012]} for a general framework. A fundamental estimation problem is the extraction of the quadratic covariation (or integrated covolatility) of a continuous martingale in terms of a Brownian motion under microstructure noise. Moreover, some even consider application-driven generalisations, such as asynchronous and irregular (non-equidistant) observation schemes with varying sample sizes. Several famous approaches exist, e.g. \citet{Zhang[2005]}, \citet{Jacod[2009]}, \citet{Barndorff[2011]}, \citet{Bibinger[2014]}, \citet{Hayashi[2005]} and \citet{Christensen[2013]}, with varying limiting behaviours depending on the employed estimation techniques. These variations make a comparison of the existing approaches difficult. Additionally and importantly, the asymptotic lower bounds are not yet completely understood, even under regular observation schemes. The reason for this lies in the fact that the underlying statistical properties in these models are mathematically highly involved, which can be seen by regarding the results on efficiency in the literature.

Notable works in the one-dimensional field exist, for a parametric set-up by \citet{Gloter[2001]}, and in a semi-parametric case by \citet{Reiss[2011]}, whose results are based on the verification of local asymptotic normality (LAN) and use sophisticated arguments such as asymptotic equivalences of experiments. An interesting finding in both cases, parametric and semi-parametric, is that due to the noise the optimal rate is of the unusual order $n^{-1/4}$. A multidimensional extension of these results marks the semi-parametric Cram\'er-Rao lower bound derived by \citet{Bibinger[2014]}. As the latter is provided under rather strong assumptions for synchronous and regular finite samples, in which non-parametric estimators are biased, an asymptotic characterisation of efficiency under asynchronicity is required. Moreover, \cite{Ogihara[2018]} derives asymptotic lower bounds for $d=2$.

Little is known about efficient estimation if the assumption that the signal is driven by a Brownian motion is dropped. The one-dimensional Cram\'er-Rao bound derived by \citet{Sabel[2014]} is noteworthy, where the signal is given by a fractional Brownian motion. However, an asymptotic and particularly multidimensional lower bound and its dependence on the Hurst parameter remain an open question.
\newpage
Estimation of scaling parameters of Gaussian processes under noise also attracts interest in other fields. Related models appear in nonparametric Bayesian problems, where Gaussian process priors subject to an unknown parameter (hyperparameter) are used, cf. \citet{Szabo[2013]}. The difference in their setting lies in the asymptotic behaviour of the scaling parameter itself, whose estimation is carried-out pathwise. Latent variance estimation can also be found in genetic fields, e.g. \citet{Verzelen[2018]}. Here, the task of estimating the heritability bears structural similarities to the problems in this work.

The aim of this paper is to provide a general asymptotic theory for Gaussian covariance estimation models. In the following Section~\ref{SecMainResults} the fundamental parametric model is introduced, in which the superposition of a scaled multivariate Gaussian process with additive errors is observed in equation \eqref{DiscPar}. A main contribution of this paper is the universal Convolution Theorem~\ref{ThmConvPara}, which gives a precise asymptotic characterisation of efficient estimation and includes the set-ups of \citet{Gloter[2001]} and \citet{Sabel[2014]} as special cases but also applies to more models of practical relevance given as examples below. Even though an idealised parametric model might not be as such utilisable for practical purposes, its asymptotic lower bounds provide a basic case benchmark for comparing estimation procedures of more general models. Moreover, the insight gained in the fundamental model might be used in far more complex models. This phenomenon resembles the approach with which the second main result, Theorem~\ref{ThmConvNonpara}, is derived, which marks a semi-parametric convolution theorem for estimating the integrated covolatility matrix. This result not only extends the set-up in \citet{Reiss[2011]} by multidimensionality and asynchronicity, but also weakens smoothness assumptions to Sobolev regularity $\beta>1/2$.

The following section gives an overview of the main results along with their proof techniques, imposed assumptions and examples. Section~\ref{SecParametric} contains the parametric analysis, particularly the verification of Theorem~\ref{ThmConvPara}. The construction of efficient estimators is followed by further asymptotic equivalences that provide further insight on the estimation problem. Section~\ref{SecSemiPara} concludes this work by the stepwise deduction of Theorem~\ref{ThmConvNonpara}. Most of the proofs and reviews of several mathematical concepts can be found in the \hyperref[Appendix]{Appendix}.

\newpage
\section{Methodology and main results}\label{SecMainResults}
\subsection{\textbf{Notation}}
We introduce spaces of matrix-valued functions as they appear as canonical parameter sets. For $A,B\in\R^{v\times w}$ and $C\in\R^{vw\times vw}$, let
\[\langle A,B\rangle_C:=\vc(A)^{\top}C\vc(B),\]
and set $\langle\cdot,\cdot\rangle:=\langle\cdot,\cdot\rangle_{I_{vw}}$, where $\vc(A)\in\R^{vw}$ is the vectorisation of $A$ and $I_k$ denotes the identity matrix in $\R^{k\times k}$. Denote the corresponding induced norms by $\|\cdot\|_C$ and $\|\cdot\|$, given that $C>0$, i.e., if $C$ is positive-definite. Note that $\|\cdot\|$ is just the Hilbert-Schmidt norm.

Further let for $u\in\N,\ \Omega:=[0,1]$ and $f,g:\Omega^u\to\R^{v\times w}$ the inner product
\[\langle f,g\rangle_{L^2}:=\int_{\Omega^u}\langle f(t),g(t)\rangle dt\]
induce the norm $\|\cdot\|_{L^2}$ and the space $L^2=L^2(\Omega^u,\R^{v\times w})$. For $\beta\in(0,2)$ the $L^2$-subspace $H^{\beta}=H^{\beta}(\Omega^u,\R^{v\times w})$ consists of all $f:\Omega^u\to\R^{v\times w}$ such that
\[\|f\|_{H^{\beta}}:=\sum_{k:|k|<\beta}\|f^{(k)}\|_{L^2}+|f|_{H^{\beta}}<\infty.\]
Here $|\cdot|_{H^{\beta}}$ denotes the Sobolev-Slobodeckij semi-norm given for $\beta\neq1$ by
\[|f|^2_{H^{\beta}}:=\sup_{k:|k|=\lfloor\beta\rfloor}\int_{\Omega^u}\int_{\Omega^u}\frac{\|f^{(k)}(x)-f^{(k)}(y)\|^2}{|x-y|^{2(\beta-\lfloor\beta\rfloor)+u}}dxdy,\]
where $\lfloor \beta\rfloor$ denotes the integer part of $\beta$, and by $\sum_{k:|k|=1}\|f^{(k)}\|^2_{L^2}$ otherwise, where $k\in\{0,1\}^u$ denotes a multiindex with $|k|=\sum^u_{i=1}k_i$. For $u=1$ we often write $f':=f^{(1)}$. Within $H^{\beta}$ the ball of radius $L>0$ is defined via
\[H^{\beta}_L:=\{f\in H^{\beta}:\|f\|_{H^{\beta}}\leq L\}.\]
For $\gamma\in(0,1]$ and $N>0$ H\"older balls are given by
\[C^{\gamma}_N:=\{f:\Omega\to\R:\sup\nolimits_{s,t\in\Omega}|f(s)-f(t)|^{\gamma}/|s-t|\leq N\}.\]
Symmetric co-domains $\R^{d\times d}_{\text{sym}}:=\{A\in\R^{d\times d}:A=A^{\top}\}$ are highlighted by the notation $\Ltwosym:=L^2(\Omega^u,\R^{d\times d}_{\text{sym}})$ and $\HBsym:=H^{\beta}(\Omega^u,\R^{d\times d}_{\text{sym}})$. It is a basic fact that if $\beta>u/2$ for any $f\in H^{\beta}(\Omega^u,\R^{v\times w})$ a continuous version can be obtained after possibly modifying $f$ on a zero-subset of $\Omega^u$. An overview over Sobolev spaces and their embedding properties with respect to H\"older spaces can be found in \citet{Triebel[2010]}.

For $Z\sim\NNN(0,I_d)$ the matrix $\ZZZ=\cov(\vc(ZZ^{\top}))$ is twice the so-called symmetriser matrix, i.e., it has the property $\ZZZ\vc(A)=\vc(A+A^{\top})$, $A\in\R^{d\times d}$, see e.g. \citet{Abadir[2005]}. Any $d^2\times d^2$-matrix $A\otimes A$ commutes with $\ZZZ$. Moreover, $\ZZZ$ is positive semi-definite and therefore not invertible.

For $(A_n)_{n\geq1}$ and $(B_n)_{n\geq1}$ in $\R^{d\times d}$ the expression $A_n\lesssim B_n$ means $\|A_n\|=\OOO(\|B_n\|)$ and $A_n\sim B_n$ means $A_n=\OOO(B_n)$ as well as $B_n=\OOO(A_n)$.

Finally, for a set of parameters $\Theta$ the Le Cam distance between two statistical experiments $\EEE=\{(X,\mathcal X,P_{\theta}):\theta\in\Theta\}$ and $\FFF=\{(Y,\mathcal Y, Q_{\theta}):\theta\in\Theta\}$ on Polish spaces is given by $\Delta(\EEE,\FFF):=\max\{\delta(\EEE,\FFF),\delta(\FFF,\EEE)\}$. Here $\delta$ denotes the one-sided deficiency
\[\delta(\EEE,\FFF):=\inf_K\sup_{\theta\in\Theta}\|K\cdot P_{\theta}-Q_{\theta}\|_{\text{TV}},\]
where the infimum is taken over all Markov kernels from $(X,\mathcal X)$ to $(Y,\mathcal Y)$ and $\|\cdot\|_{\text{TV}}$ denotes the total variation norm. Sequences $(\EEE_n)_{n\geq1}$ and $(\FFF_n)_{n\geq1}$ of experiments are called asymptotically equivalent if $\Delta(\EEE_n,\FFF_n)=o(1)$. The latter implies that asymptotic properties transfer from one model to the other, and vice versa. Properties of $\Delta$ can be found in Appendix~\ref{SsecLeCamDistance} and \ref{SsecWeakLanReg}, see also \cite{LeCam[2000]} for a thorough introduction.
\subsection{\textbf{Fundamental parametric model}}
Consider the $d$-dimensional discrete observation model generated by the observations
\begin{equation}\label{DiscPar}
\tilde Y_i=\Sigma^{1/2}G_{i/n}+\varepsilon_i,\quad i=1,\ldots,n,
\end{equation}
where $G=(G_t)_{t\in[0,1]}$ is such that $G\sim\NNN^{\otimes d}_{0,\Gamma}$, for a centred Gaussian measure $\NNN_{0,\Gamma}$ on $L^2([0,1],\R)$ with covariance operator $\Gamma$. Assume that $G$ is independent of the i.i.d. errors $\varepsilon_1,\ldots,\varepsilon_n\sim\NNN(0,\eta^2I_d)$. The noise level $\eta>0$ is a nuisance parameter, whereas $\Sigma$ is the parameter of interest subject to
\begin{equation}\label{Parspace}
\Spar:=\{\Sigma\in\R^{d\times d}_+:0<\Sigma<SI_d\},
\end{equation}
where $S>0$. Here $\R^{d\times d}_+$ denotes all positive-definite $\R^{d\times d}$-matrices and the ordering $\Sigma<SI_d$ is meant with respect to positive definiteness.

An important tool paving the way to asymptotic lower bounds in the present work are several asymptotic equivalences in Le Cam's sense. In order to obtain a mathematically more convenient working basis, consider the spectral analogue of \eqref{DiscPar} given by
\begin{equation}\label{SeqPar}
Y_p\sim\NNN(0,C_p),\quad C_p:=\Sigma\lambda_p+\frac{\eta^2}{n}I_d,\quad p\geq1.
\end{equation}
The sequence $\lambda=(\lambda_p)_{p\geq1}$ denotes the eigenvalue sequence of the covariance operator of $\Gamma$. The approximation error between the models \eqref{DiscPar} and \eqref{SeqPar} is quantifiable by the Le Cam $\Delta$-distance, which is negligible under the following regularity assumption, cf. Proposition~\ref{PropFDiscSeq} below.
\begin{assumption}\label{AssSoboCov}\textbf{-}$\pmb{G(\beta).}$ The function $(s,t)\mapsto\cov(G_s,G_t),\ s,t\in[0,1]$, lies in $H^{\beta}$ for some $\beta\in(1,2)$.
\end{assumption}

As an important consequence of asymptotic equivalence, LAN-expansions and convolution theorems in \eqref{DiscPar} and \eqref{SeqPar} coincide. However, as there are infinitely many non-identically distributed vectors $Y_p$ in \eqref{SeqPar} it is not clear at all whether a LAN-expansion holds since the sum of infinitely many remainder terms needs to be controlled. For the latter it will be crucial that the behaviour of certain subsequences $(\lambda_{p_n})_{n\geq1}$ carries over to the entire sequence $(\lambda_p)_{p\geq1}$ which can be done under the following.
\begin{assumption}\label{regvarass}\textbf{-}$\pmb{\lambda(\delta).}$ The eigenvalues $\lambda=(\lambda_p)_{p\geq1}$ of $\Gamma$ are strictly-positive and regularly varying at infinity with index $-\delta,\ \delta>1$, i.e.,
\begin{equation}\label{regvar}
\lim_{p\to\infty}\frac{\lambda_{\lfloor ap\rfloor}}{\lambda_p}=a^{-\delta},\ \forall a>0.
\end{equation}
\end{assumption}

If $P^n_{\Sigma}$ denotes the measure induced by \eqref{SeqPar} then Assumption~\ref{regvarass}-$\lambda(\delta)$ ensures that a certain LAN-expansion holds, i.e., for $H\in\Rddsym$ one has
\[\log\frac{dP^n_{\Sigma+r_nH}}{dP^n_{\Sigma}}\overset{P^n_{\Sigma}}{\to}\Delta_H-\frac{1}{2}\|H\|^2_{\III(\Sigma)\ZZZ},\]
where $\Delta_H\sim\NNN(0,\|H\|^2_{\III(\Sigma)\ZZZ})$ and $\III(\Sigma)\ZZZ\in\R^{d^2\times d^2}$ is the asymptotic Fisher information matrix, cf. Proposition~\ref{LAN_gen}. The rate $r_n\to0$ is obtained by
\[\lim_{n\to\infty}n\lambda_{\lfloor r^{-2}_n\rfloor}=c,\]
where $c>0$ is chosen such that $r_n$ is normalised with respect to multiplicative scalars, e.g. $r_n=n^{-1/4}$ but not $r_n=2n^{-1/4}$. Thus a slow decay of $\lambda$ implies a fast decay of $r_n$, and vice versa. Since the Fisher information $\III(\Sigma)\ZZZ$ is singular it is not obvious how classical implications from LAN-theory, e.g. a convolution theorem, can be obtained. This problem is overcome by symmetrising properties of $\ZZZ$ which allow for certain isometries, cf. Remark~\ref{RmkZZZ} below. In a non-noisy set-up \citet{Brouste[2018]} recently derived asymptotic lower bounds despite singularity by usage of certain rate matrices. For a further discussion of $r_n$ and $\III(\Sigma)$ see Section~\ref{efficiency}.

\subsection{\textbf{Parametric main result}} Let $\psi(\Sigma)\in\R^k$ be a differentiable target of estimation in the sense that there is some $\nabla\psi_{\Sigma}\in\R^{k\times d^2}$ such that
\begin{equation}\label{reg_esti}
r^{-1}_n(\psi(\Sigma+r_nH)-\psi(\Sigma))\to\nabla\psi_{\Sigma}\vc(H),\quad H\in\Rddsym,
\end{equation}
as $n\to\infty$. In the following, sequences of so-called regular estimators $\hat\vartheta_n$ of $\psi(\Sigma)$ are regarded, cf. Appendix~\ref{SsecWeakLanReg} for a definition.
\begin{theorem}\label{ThmConvPara}
Let $\hat\vartheta_n$ be a sequence of regular estimators of $\psi(\Sigma)\in\R^k$ with \eqref{reg_esti} and suppose that Assumptions~\ref{AssSoboCov}-$G(\beta)$ and \ref{regvarass}-$\lambda(\delta)$ are met. Then under $P^n_{\Sigma+r_nH},\ H\in\Rddsym$, and as $n\to\infty$ it holds that
\[r^{-1}_n(\hat\vartheta_n-\psi(\Sigma+r_nH))\overset{d}{\to}\NNN\left(0,\tfrac{1}{4}\nabla\psi^{\top}_{\Sigma}\III(\Sigma)^{-1}\ZZZ\nabla\psi_{\Sigma}\right)\ast R,\]
for some distribution $R$.
\end{theorem}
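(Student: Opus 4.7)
The plan is to deduce the theorem from the LAN expansion of Proposition~\ref{LAN_gen} combined with a classical H\'ajek--Le Cam convolution theorem. The non-standard feature to be handled is the singularity of the information form $\III(\Sigma)\ZZZ$ on $\R^{d^2}$, which rules out a direct application of H\'ajek's theorem on the full $d^2$-dimensional vectorisation of the parameter. The way out is to exploit the observation recorded in the paragraph preceding the theorem: the true parameter set $\Rddsym$ has dimension $d(d+1)/2$, and $\ZZZ$ becomes non-singular once restricted to $V:=\vc(\Rddsym)$, because for $H\in\Rddsym$ one has $\ZZZ\vc(H)=\vc(H+H^{\top})=2\vc(H)$, so $\ZZZ|_{V}=2\,\mathrm{Id}_V$ and $\tfrac12\ZZZ$ is the orthogonal projector $P_V$ onto $V$.

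Concretely, I would pick an isometry $T:\R^{d(d+1)/2}\to V$ (for instance induced by the half-vectorisation / duplication operator) and pass to the reduced coordinate $\theta=T^{-1}\vc(H)$. In these coordinates the LAN expansion of Proposition~\ref{LAN_gen} rewrites as a genuine non-degenerate LAN on $\R^{d(d+1)/2}$ with Fisher information $2T^{\top}\III(\Sigma)T$. The differentiability condition~\eqref{reg_esti} provides a reduced Jacobian $J:=\nabla\psi_{\Sigma}T$ for $\psi$, and the regularity of $\hat\vartheta_n$ transports through the linear reparametrisation. Invoking H\'ajek's classical convolution theorem in the form recalled in Appendix~\ref{SsecWeakLanReg} then yields
\[r_n^{-1}(\hat\vartheta_n-\psi(\Sigma+r_nH))\overset{d}{\to}\NNN\bigl(0,\tfrac{1}{2}J(T^{\top}\III(\Sigma)T)^{-1}J^{\top}\bigr)\ast R.\]

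It remains to re-express the reduced variance in the ambient $d^2$-dimensional language of the theorem. Assuming, as is the case for Gaussian covariance models whose Fisher information carries a Kronecker-product structure, that $\III(\Sigma)$ preserves $V$ (equivalently, commutes with $P_V=\tfrac12\ZZZ$), one obtains the identity $T(T^{\top}\III(\Sigma)T)^{-1}T^{\top}=\III(\Sigma)^{-1}P_V$, from which $\tfrac{1}{2}J(T^{\top}\III(\Sigma)T)^{-1}J^{\top}$ simplifies to $\tfrac{1}{4}\nabla\psi_{\Sigma}\III(\Sigma)^{-1}\ZZZ\nabla\psi_{\Sigma}^{\top}$, matching the variance stated in the theorem. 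I expect this last identification to be the main technical obstacle, in particular verifying that the $\III(\Sigma)$-pseudo-inverse on $V$ really agrees with $\III(\Sigma)^{-1}P_V$ and that $\nabla\psi_{\Sigma}$, which \eqref{reg_esti} only pins down along symmetric directions, couples correctly with this formula. Once these linear-algebraic identifications are in place, the rest reduces to a routine invocation of the LAN from Proposition~\ref{LAN_gen} and H\'ajek's convolution theorem on the non-degenerate reduced model.
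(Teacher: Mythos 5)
Your proposal is correct and follows essentially the same route as the paper: both start from the LAN of Proposition~\ref{LAN_gen}, invoke the classical convolution theorem, and handle the singularity of $\III(\Sigma)\ZZZ$ by restricting to the symmetric subspace $V=\vc(\Rddsym)$, on which $\ZZZ$ acts as twice the identity. The paper realises this via a $\langle\cdot,\cdot\rangle_{\III(\Sigma)\ZZZ}$-orthonormal basis of $V$ and a Parseval-type computation, whereas you reparametrise through an explicit isometry $T:\R^{d(d+1)/2}\to V$; the two are equivalent linear-algebra realisations, and the identity $T(T^{\top}\III(\Sigma)T)^{-1}T^{\top}=\tfrac12\III(\Sigma)^{-1}\ZZZ$ you flag as the main obstacle does hold, since the eigendecomposition in Theorem~\ref{ThmRateFisher} shows that $\III(\Sigma)$ preserves $V$ and hence commutes with the projector $\tfrac12\ZZZ$.
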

The deduction of the above result offers a comprehensive understanding of how efficient estimation, particularly the optimal estimation rate $r_n$ and the geometry of the Fisher information matrix, depends on the spectral properties of the signal. Moreover, Theorem~\ref{ThmConvPara} extends the knowledge of asymptotic lower bounds in a few one-dimensional models to a general class of underlying multidimensional Gaussian processes. It is noted that only the leading term of $(\lambda_p)_{p\geq1}$ has to be known for the derivation of lower bounds.

As mentioned before, several estimators have been designed for particular Gaussian models. In this work a universal estimation approach is given by
\[\hat\vartheta^{\text{ad}}_n:=\sum_{p\in\pi_n}W_p\lambda^{-1}_p\vc(Y_pY^{\top}_p-\eta^2/n I_d),\]
where $\pi_n\subsetneq\N$ and $W_p\in\R^{d^2\times d^2}$ are adaptive weights. A spectral approach has been already used, e.g. by \citet{Bibinger[2014]}, for a covariation estimator, where martingale properties inherited from the Brownian motion are a key argument. In contrary, constructing $W_p$ independently of $(Y_p)_{p\in\pi_n}$ is the crucial idea in this work, which yields generality and gives
\[r^{-1}_n(\hat\vartheta^{\text{ad}}_n-\psi(\Sigma+r_nH))\overset{d}{\to}\NNN\left(0,\tfrac{1}{4}\nabla\psi^{\top}_{\Sigma}\III(\Sigma)^{-1}\ZZZ\nabla\psi_{\Sigma}\right),\]
under $P^n_{\Sigma+r_nH}$, for any $H\in\Rddsym$, cf. Theorem~\ref{ThmOrAd}. The matching upper bounds imply that the derived lower bounds from Theorem~\ref{ThmConvPara} are sharp.
\begin{remark}\label{RmkDepNoise}
If the model is generalised to non-diagonal noise $\varepsilon_1,\ldots,\varepsilon_n\sim\NNN(0,H)$ with $H\in\R^{d\times d}_+$ known, then lower and upper bounds can be derived in the same way if the transformations $\tilde Y'_i:=H^{-1/2}Y_i$, $i=1,\ldots,n$ are used. In particular, $\Sigma$ in $\III(\Sigma)$ has to be replaced by $H^{-1/2}\Sigma H^{-1/2}$ and $\eta^2$ is set to the value $1$.
\end{remark}
\begin{remark}\label{RmkWeakDep}
Another possible extension is given by weakly dependent noise. Let us consider stationary $m$-dependent noise, i.e., $\E[\varepsilon_i\varepsilon_{i+j}]=\eta_j$ with $\eta_j=0,\ j>m$, which is used in high-frequency statistics, e.g. by \citet{Hautsch[2013]}. With $\eta'_n:=\var(n^{-1/2}\sum^n_{i=1}\varepsilon_i)=\eta_0+2\sum^m_{j=1}\frac{n-j}{n}\eta_j$ a `big-block-small-block' argument gives rise to the desired connection between discrete and sequence space model in the sense that $\eta^2$ in \eqref{SeqPar} should be replaced with $\lim_{n\to\infty}\eta'_n$ and the theory provided by this work can be applied. However, this results in more assumptions on $\beta$, $\gamma$ and $m$ and is therefore omitted.
\end{remark}
\begin{remark}\label{RmkSigmaRandom}
The techniques of this work can also be carried out if $\Sigma$ is random but $G$ given $\Sigma$ is still Gaussian. The derivation of a conditional convolution theorem is then obtained if Assumption H0 (which replaces the usage of Le Cam's third Lemma) of the general result by \citet{Clement[2013]} is met. Again, precise derivations are omitted.
\end{remark}

\begin{example}\label{ExBM}
If $G$ denotes a $d$-dimensional Brownian motion, then $\lambda^{\text{BM}}_p=(\pi(p-1/2))^{-2}$, i.e., Assumption~\ref{regvarass} holds with $\delta=2$. Then efficient regular estimators $\hat\vartheta_n$ of $\vartheta=\vc(\Sigma)$ satisfy (cf. Theorem~\ref{ThmRateFisher} below)
\begin{equation}\label{clt_bm}
n^{1/4}(\hat\vartheta_n-\vartheta)\overset{d}{\to}\NNN(0,2\eta(\Sigma\otimes\Sigma^{1/2}+\Sigma^{1/2}\otimes\Sigma)\ZZZ).
\end{equation}
For $d=1$ this result coincides with \citet{Gloter[2001]} and for $d\geq 1$, \eqref{clt_bm} extends asymptotically the Cram\'er-Rao bound of \citet{Bibinger[2014]}.
\end{example}

\begin{example}\label{ExfBM}
If $G$ is a fractional Brownian motion with Hurst exponent $H\in(0,1)$, then, by \citet{Chigansky[2018]}, the corresponding eigenvalues satisfy \eqref{regvar} with $\delta=2H+1$:
\[\lambda^{\text{fBM}}_p=\frac{\sin(H\pi)\Gamma(2H+1)}{(\pi p)^{2H+1}}+o(p^{-(2H+1)}),\quad p\geq1.\]
Precise asymptotic lower bounds have only been known for $d=1$ in a non-noisy setting, cf. \citet{Brouste[2018]}. In the multivariate noisy set-up Theorem~\ref{ThmConvPara} implies for $H>1/4$ that the rate of of efficient estimators is $r_n=n^{-1/(4H+2)}$, where the restriction $H>1/4$ ensures Assumption~\ref{AssSoboCov}-$G(\beta)$. The optimal asymptotic covariance can be easily calculated by Theorem~\ref{ThmRateFisher} below. Note that the Cram\'er-Rao bound in \citet{Sabel[2014]} holds for any $H\in(0,1)$. Whether the models \eqref{DiscPar} and \eqref{SeqPar} can be separated for $H\leq 1/4$ lies beyond the scope of this paper.
\end{example}

\begin{example}
The eigenvalues $\lambda^{\text{BB}}_p=(\pi p)^{-2}$ corresponding to a Brownian bridge have the same leading term as $\lambda^{\text{BM}}_p$ in Example~\ref{ExBM}, hence \eqref{clt_bm} holds as well. Similarly, regard the (stationary) Ornstein-Uhlenbeck process
\[\Sigma^{1/2}G_t=\Sigma^{1/2}G_0e^{-\beta t}+\Sigma^{1/2}\int^t_0e^{-\beta(t-s)}dB_s,\quad t\in[0,1],\]
where $G_0\sim\NNN(0,(2\beta)^{-1}I_d),\ \beta>0$ and $B$ is a standard Brownian motion. Under the normalisation $\beta=1/2$ the eigenvalues $\lambda^{\text{OU}}_p=\frac{2\beta}{p^2\pi^2}+o(p^{-2})$ imply \eqref{clt_bm} as well. This means that mean-reversion or the behaviour of bridges have no impact on estimation of $\Sigma$. In fact, the three models corresponding to $\lambda^{\text{BM}}_p$, $\lambda^{\text{BB}}_p$ and $\lambda^{\text{OU}}_p$ are even asymptotically equivalent, cf. Proposition~\ref{PropEigLeCamEqui}.

Similarly a fractional Brownian bridge and a fractional Ornstein-Uhlenbeck process seem to offer the same asymptotics as $\lambda^{\text{fBM}}_p$, cf. the (yet unpublished) drafts by \citet{Chigansky[2017]} and \citet{Chigansky[2018b]}.
\end{example}

\begin{example}
For the $m$-fold integrated Brownian motion the eigenvalues satisfy $\lambda^{m\text{BM}}_p=(\pi p)^{-(2m+2)}+o(p^{-(2m+2)})$, cf. \citet{Wang[2008]}. This implies $r_n=n^{-1/(4m+4)}$, which reveals the interesting phenomenon that very smooth signal paths lead to rather poor estimation rates, also cf. Example~\ref{ExfBM}, where regularity is increasing in $H$ whereas $r_n$ is decreasing.
\end{example}

\subsection{\textbf{Semi-parametric asynchronous model}}
On the basis of the parametric results asymptotic lower bounds in the more sophisticated asynchronous observation model
\begin{equation}\label{DiscSemi}
Y_{i,j}=(X_{t_{i,j}})_j+\varepsilon_{i,j},\quad 1\leq i\leq n_j,\ 1\leq j\leq d,
\end{equation}
are derived, where $X_t=X_0+\int^t_0\Sigma^{1/2}(s)dB_s$ denotes a continuous martingale in terms of a $d$-dimensional standard Brownian motion $B=(B_t)_{t\in[0,1]}$. The noise variables $\varepsilon_{i,j}\sim\NNN(0,\eta^2_j),\ 1\leq i\leq n_j$, with $\eta_j>0$ known, $1\leq j\leq d$, are mutually independent and independent of the signal $X=(X_t)_{t\in[0,1]}$. Moreover, suppose for the asymptotics $n_{\min}:=\min_{1\leq j\leq d}n_j\to\infty$ that $n_{\min}/n_j\to\nu_j$ for some $\nu_j\in(0,1],\ j=1,\ldots,d$.
\begin{assumption}\label{AssSigma}\textbf{-}$\pmb{\Sigma(\beta,M,S).}$ For some $\beta>1/2$, $M>0$, and $S>1$ we assume that $\Sigma$ belongs to the parameter set
\[\Snon:=\left\{A:[0,1]\to\Rddsym\Big|A\in H^{\beta}_M:S^{-1}I_d<A(t)<SI_d,\forall t\in[0,1]\right\}.\]
\end{assumption}
\begin{assumption}\label{AssRegF}\textbf{-}$\pmb{F(\gamma,N,\beta).}$ The observation times obey $t_{i,j}=F^{-1}_j(i/n_j)$ for a distribution function $F_j:[0,1]\to[0,1]$ with derivative $F'_j$ and
\begin{itemize}
\item[(i)] $F_j(0)=0$ and $F_j(1)=1$,
\item[(ii)] $F'_j\in C^{\gamma}_N$ and $F'_j>0$,
\end{itemize}
for $j=1,\ldots,d$, and some $\gamma\in(\beta,1],\ N>0$.
\end{assumption}

As in the parametric set-up, \eqref{DiscSemi} is approximated by a spectral representation for which the conditions $\gamma>\beta>1/2$ and $\Sigma>S^{-1}I_d$ are needed. The latter one is slightly restrictive but not uncommon, cf. \citet{Reiss[2011]}. The spectral representation is given by the mutually independent random vectors
\begin{equation}\label{SeqSemi}
Y_{pk}\sim\NNN(0,C_{pk}),\quad k=0,\ldots,m-1,\ p\geq 1,
\end{equation}
where $C_{pk}:=\Sigma(k/m)\lambda_{mp}+n^{-1}_{\min}\Xi^2(k/m),\ \lambda_{mp}:=(\pi pm)^{-2}$ and
\[\Xi^2(t):=\diag(\eta^2_j\nu_j/(F'_j(t)))_{1\leq j\leq d}.\]
However, the approximation of \eqref{DiscSemi} by \eqref{SeqSemi} holds only for localisations $\Sigma+n^{-1/4}_{\min}H,\ H\in\HBsym$, which nevertheless is the right ingredient to ensure that LAN-expansions in the sequence space carry over to \eqref{DiscSemi}, cf Proposition~\ref{PropLANequi}.

\subsection{\textbf{Semi-parametric main result}} For each $k$ the sequence $(Y_{pk})_{p\geq1}$ in \eqref{SeqSemi} is of the same type as the fundamental sequence space model in \eqref{SeqPar}. Indeed the parametric results can be applied simultaneously (over $k$) to the setting \eqref{SeqSemi}, for which we consider targets of estimation given by
\begin{equation}\label{targetinf}
\psi(\Sigma):=\int^1_0(W(\Sigma))(t)dt
\end{equation}
with a differentiable weight $W:\Theta_1\to L^2([0,1],\R^{d^2})$ in the sense that
\begin{equation}\label{targetreg}
n^{1/4}_{\min}(W(\Sigma+n^{-1/4}_{\min}H)-W(\Sigma))\to\nabla W_{\Sigma}\cdot\vc(H),\quad H\in\HBsym,
\end{equation}
as $n_{\min}\to\infty$, for some $\nabla W_{\cdot}\in L^2([0,1],\R^{d^2\times d^2})$. An example is given by the choice $W(\Sigma)=\vc(\Sigma)$ with $\nabla W_{\cdot}=I_{d^2}$.
\begin{theorem}\label{ThmConvNonpara}
Let $\hat\vartheta_n$ be a sequence of regular estimators of $\psi(\Sigma)$ as in \eqref{targetinf} with \eqref{targetreg} and suppose that Assumptions~\ref{AssSigma}-$\Sigma(\beta,M,S)$ and \ref{AssRegF}-$F(\gamma,N,\beta)$ are met. Then under $Q^n_{\Sigma+n^{-1/4}_{\min}H},\ H\in\HBsym$, it holds that
\[n^{1/4}_{\min}(\hat\vartheta_n-\psi(\Sigma+n^{-1/4}_{\min}H))\overset{d}{\to}\NNN\Big(0,\frac{1}{4}\int^1_0(\nabla W_{\Sigma}\III^{-1}_{\Sigma}\ZZZ\nabla W_{\Sigma}^{\top})(t)dt\Big)\ast R,\]
as $n_{\min}\to\infty$, for some $R$, where $Q^n_{\Sigma}$ is the measure induced by \eqref{SeqSemi} and
\[\III^{-1}_{\Sigma}(t)=8(\Sigma^{1/2}_{\Xi}(t)\otimes\Sigma(t)+\Sigma(t)\otimes \Sigma^{1/2}_{\Xi}(t)),\ t\in[0,1],\]
with $\Sigma^{1/2}_{\Xi}:=\Xi(\Xi^{-1}\Sigma\Xi^{-1})^{1/2}\Xi$.
\end{theorem}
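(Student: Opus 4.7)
The plan is to exploit the block structure of the spectral model \eqref{SeqSemi}: for each fixed $k\in\{0,\dots,m-1\}$ the sequence $(Y_{pk})_{p\geq 1}$ is exactly a parametric experiment of the type \eqref{SeqPar}, with parameter $\Sigma(k/m)$, noise covariance $\Xi^2(k/m)$ (handled via Remark~\ref{RmkDepNoise}), sample size $n_{\min}$ and eigenvalues $\lambda_{mp}=(\pi pm)^{-2}$ fulfilling Assumption~\ref{regvarass}-$\lambda(\delta)$ with $\delta=2$. The rate balance $n_{\min}\lambda_{\lfloor r_n^{-2}\rfloor}\to c$ gives the blockwise parametric rate $m/n_{\min}^{1/2}$, and the aggregated rate over all $m$ blocks becomes $n_{\min}^{-1/4}$ after an appropriate (diverging) choice of $m$. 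Assumptions~\ref{AssSigma}-$\Sigma(\beta,M,S)$ and \ref{AssRegF}-$F(\gamma,N,\beta)$ with $\gamma>\beta>1/2$ ensure that $\Sigma$ and $\Xi^{-1}\Sigma\Xi^{-1}$ are (after passing to a continuous version by Sobolev embedding) uniformly bounded and regular, so pointwise evaluations at $t=k/m$ are well defined and approximate their continuous counterparts.

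The first step is to derive a LAN expansion in the sequence space experiment. Since the $Y_{pk}$ are independent across $p$ and $k$, the log-likelihood ratio decomposes as
\[\log\frac{dQ^n_{\Sigma+n_{\min}^{-1/4}H}}{dQ^n_{\Sigma}}=\sum_{k=0}^{m-1}\log\frac{dP^{(k)}_{\Sigma(k/m)+n_{\min}^{-1/4}H(k/m)}}{dP^{(k)}_{\Sigma(k/m)}},\]
where $P^{(k)}$ denotes the $k$-th blockwise parametric measure. Applying Proposition~\ref{LAN_gen} to each block produces, at rate $n_{\min}^{-1/4}$ after suitable rescaling by the block weight $1/m$, a central statistic $\Delta_{H,k}$ with variance $\tfrac{1}{m}\langle H(k/m),\mathcal{I}_{\Sigma(k/m)}\mathcal{Z}\,\vc(H(k/m))\rangle$ and a quadratic term that converges to the Riemann sum
\[\tfrac{1}{m}\sum_{k=0}^{m-1}\langle \vc(H(k/m)),\mathcal{I}_{\Sigma(k/m)}\mathcal{Z}\,\vc(H(k/m))\rangle\longrightarrow\int_0^1\langle \vc(H(t)),\mathcal{I}_{\Sigma(t)}\mathcal{Z}\,\vc(H(t))\rangle\,dt.\]
The convergence of this Riemann sum is the first non-trivial technical step: it requires $H\in H^\beta_{\text{sym}}$ combined with Sobolev embedding (since $\beta>1/2$) and continuity of the map $(\Sigma,\Xi)\mapsto\mathcal{I}_\Sigma\mathcal{Z}$, which follows from the explicit form $\mathcal{I}^{-1}_{\Sigma}=8(\Sigma_{\Xi}^{1/2}\otimes\Sigma+\Sigma\otimes\Sigma_{\Xi}^{1/2})$ inherited from the parametric computation (cf. Theorem~\ref{ThmRateFisher}).

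The second step is control of the cross-block remainders: the sum over $k$ of individual remainders from the parametric LAN expansions must vanish uniformly. This is the main obstacle, since each parametric remainder is $o_{P}(1)$, but the summation over $m\to\infty$ blocks requires a \emph{quantitative} bound. The approach is to track the remainder bound in Proposition~\ref{LAN_gen} as a function of $(\Sigma,\Xi,\lambda,n)$ and use the uniformity afforded by $S^{-1}I_d<\Sigma(t)<SI_d$ and boundedness of $F'_j$ to get an $o(1/m)$ estimate per block; then the sum is $o(1)$ and provides a joint LAN expansion with information quadratic form as above. In this step the Sobolev regularity $\beta>1/2$ of $\Sigma$ enters to bound discretisation errors between $\Sigma(t)$ and $\Sigma(k/m)$ on $[k/m,(k+1)/m)$.

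Having established LAN in the infinite-dimensional parameter space $H^\beta_{\text{sym}}$ with Fisher-information bilinear form $\Lambda_\Sigma(H_1,H_2)=\int_0^1\langle \vc(H_1(t)),\mathcal{I}_{\Sigma(t)}\mathcal{Z}\,\vc(H_2(t))\rangle dt$, the target $\psi(\Sigma)=\int_0^1(W(\Sigma))(t)\,dt$ inherits from \eqref{targetreg} a continuous linear derivative $\dot{\psi}_\Sigma(H)=\int_0^1\nabla W_\Sigma(t)\,\vc(H(t))\,dt$. Applying the standard Hájek–Le Cam convolution theorem under singular information through the symmetriser $\mathcal{Z}$ (as discussed in Remark~\ref{RmkZZZ} and following the approach in Section~\ref{SecParametric}) yields the asymptotic law
\[\NNN\Big(0,\tfrac{1}{4}\sup_{H}\tfrac{(\dot\psi_\Sigma(H))^{\otimes2}}{\Lambda_\Sigma(H,H)}\Big)\ast R.\]
A variational computation---reducing the supremum to a pointwise-in-$t$ optimisation because the information is an $L^2$-integral---produces the claimed covariance $\tfrac14\int_0^1(\nabla W_\Sigma\mathcal{I}^{-1}_\Sigma\mathcal{Z}\nabla W_\Sigma^\top)(t)\,dt$. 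Finally, to transfer the convolution theorem from the sequence-space measure $Q^n_\Sigma$ to the discrete observation model \eqref{DiscSemi} one invokes Proposition~\ref{PropLANequi} and the general transfer principle via Le Cam's $\Delta$-distance.
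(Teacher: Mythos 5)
Your proposal follows the same strategy as the paper's proof: decompose the log-likelihood over the $m$ spectral blocks, apply Proposition~\ref{LAN_gen} blockwise with its quantitative remainder bounds \eqref{LANrem1}--\eqref{LANrem2} to control the aggregated remainders $\sum_k\rho^{(i,k)}_n$, obtain the LAN quadratic form as a Riemann sum converging to $\int^1_0\|H(t)\|^2_{\III_{\Sigma}(t)\ZZZ}\,dt$, and then run the convolution argument through the symmetriser $\ZZZ$ exactly as in the parametric case. Two minor inaccuracies are worth fixing: (i) the blockwise parametric rate from $\lambda_{mp}=(\pi pm)^{-2}$ is $r_n\sim\sqrt{m}\,n_{\min}^{-1/4}$, not $m\,n_{\min}^{-1/2}$ (the latter is $p_n^{-1}$, not $r_n=p_n^{-1/2}$), and (ii) the expression $\sup_H(\dot\psi_\Sigma(H))^{\otimes2}/\Lambda_\Sigma(H,H)$ is not well-posed as a matrix supremum — it should be read as a quadratic form in each fixed direction, or computed, as the paper does, via an orthonormal basis $(H_u)_{u\leq U}$ of finite-dimensional subspaces $L_U\subseteq H^\beta_{\mathrm{sym}}$ and then passing $U\to\infty$.
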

The above statement extends the one-dimensional asymptotic efficiency results of \citet{Reiss[2011]} in various ways. Firstly, the needed H\"older-regularity $(1+\sqrt{5})/4\approx0.81$ in \citet{Reiss[2011]} can be relaxed to Sobolev regularity $\beta>1/2$. This relaxation is achieved by focussing on asymptotically equivalent experiments that share the same semi-parametric lower bounds for targets as in \eqref{targetinf}, whereas Rei\ss\ even considers experiments with common asymptotic non-parametric lower bounds. Moreover, Theorem~\ref{ThmConvNonpara} allows for multidimensionality of $\Sigma$ as well as for asynchronicity and therefore extends asymptotically the basic case Cram\'er-Rao bound for continuously differentiable $\Sigma$ by \citet{Bibinger[2014]}. Since the local method of moments estimator provided by \citet{Bibinger[2014]} attains the Gaussian part of the limit distribution of Theorem~\ref{ThmConvNonpara}, the derived bounds are sharp.

\begin{remark}
The steps that are taken to establish Theorem~\ref{ThmConvNonpara} can be developed analogously if $\Sigma=(\Sigma_t)_{t\in[0,1]}$ is assumed to be random with realisations in $\Snon$ and if $X$ conditioned on $\Sigma$ is still Gaussian. Again the result by \citet{Clement[2013]} gives a conditional convolution theorem, cf. Remark~\ref{RmkSigmaRandom}. The estimator provided by \citet{Altmeyer[2015]} attains the corresponding asymptotic stochastic lower bounds. Similarly, extensions for the noise can be obtained as illustrated in Remark~\ref{RmkDepNoise} and \ref{RmkWeakDep}.
\end{remark}

\section{Analysis of the fundamental parametric model}\label{SecParametric}
Throughout this section we assume that $\Sigma\in\Spar$ for some $S>0$, cf. \eqref{Parspace}, and that Assumption~\ref{AssSoboCov}-$G(\beta)$ and Assumption~\ref{regvarass}-$\lambda(\delta)$ are satisfied.
\subsection{\textbf{Connection between discrete and sequence space model}}\label{secCtsSeq}
Consider the discrete observation model \eqref{DiscPar} and its continuous analogue
\begin{equation}\label{ContPar}
dY_t=\Sigma^{1/2}G_tdt+\frac{\eta}{\sqrt{n}}dW_t,\quad t\in[0,1],
\end{equation}
where $W$ is a Wiener process independent of $G$. The model \eqref{ContPar} is consistent with observing the stochastic bilinear forms
\begin{equation}\label{cylmeas}
Y_f:=(f,dY):=\sum^d_{j=1}\int^1_0(f(t))_jd(Y_t)_j,\quad f\in L^2([0,1],\R^d).
\end{equation}
$Y_f$ is Gaussian with $\E[Y_f]=0$ and $\text{Cov}(Y_f,Y_g)=\langle K_{\Sigma,n}f,g\rangle_{L^2}$. The underlying covariance operator $K_{\Sigma,n}$ is given by
\[K_{\Sigma,n}:=T_{\Sigma^{1/2}}\diag(\Gamma)_{1\leq j\leq d}T_{\Sigma^{1/2}}+\frac{\eta^2}{n}\Id,\]
with $T_{\Sigma^{1/2}}:f\mapsto\Sigma^{1/2} f,\ \Id:f\mapsto f$ and $\diag(\Gamma)_{1\leq j\leq d}:f\mapsto(\Gamma f_j)_{1\leq j\leq d}$ being the covariance operator of $G$. For the orthonormal eigenbasis $(\varphi_p)_{p\geq1}$ of $\Gamma$ and $e_{pi}:=(\1_{\{i=j\}}\varphi_p)_{1\leq j\leq d}$ the vectors $(Y_{e_{p1}},\ldots,Y_{e_{pd}})^{\top},\ p\geq1,$ follow the same distribution as the sequence $(Y_p)_{p\geq1}$ in \eqref{SeqPar}.
\begin{definition}
Denote by $\ExpF$ and $\ExpFS$ the statistical experiments that are generated by the observations \eqref{DiscPar} and \eqref{SeqPar}, respectively.
\end{definition}
Since $(\varphi_p)_{p\geq 1}$ is a basis, observing the sequence $(Y_p)_{p\geq1}$ in \eqref{SeqPar} is equivalent to observe \eqref{ContPar}. Moreover, the following is just a consequence of the more general Theorem~\ref{ThmLeCamDiscCont} given in the Appendix.
\begin{proposition}\label{PropFDiscSeq}
Under Assumption~\ref{AssSoboCov}-$G(\beta)$ the experiments $\ExpF$ and $\ExpFS$ are asymptotically equivalent. More precisely, the Le Cam distance obeys
\[\Delta(\ExpF,\ExpFS)=\OOO(Sn^{1-\beta}).\]
\end{proposition}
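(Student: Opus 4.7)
The plan is to reduce the claim in two steps to the general discrete--continuous comparison Theorem~\ref{ThmLeCamDiscCont} of the Appendix. First I would pass from the sequence-space experiment $\ExpFS$ to the continuous observation experiment generated by \eqref{ContPar} via the Karhunen--Lo\`eve basis, obtaining a Le Cam equivalence with zero deficiency. Second I would invoke the appendix theorem to compare this continuous experiment to $\ExpF$, verifying that its hypotheses are met by Assumption~\ref{AssSoboCov}-$G(\beta)$ together with \eqref{Parspace}.

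For the first step the continuous experiment is, by \eqref{cylmeas}, characterised by the centred Gaussian family $(Y_f)_{f\in L^2([0,1],\R^d)}$ with covariance operator $K_{\Sigma,n}$. Since $(\varphi_p)_{p\geq1}$ is an orthonormal basis of $L^2([0,1],\R)$, the system $(e_{pj})_{p\geq1,\,1\leq j\leq d}$ is an orthonormal basis of $L^2([0,1],\R^d)$ and $K_{\Sigma,n}$ is block-diagonalised in it with blocks $\Sigma\lambda_p+(\eta^2/n)I_d$. Hence the coordinate vectors $(Y_{e_{p1}},\ldots,Y_{e_{pd}})^{\top}$ are independent in $p$ and distributed as $\NNN(0,C_p)$, matching \eqref{SeqPar} in law; conversely these coordinates recover the full Gaussian family. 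This yields a pair of deterministic Markov kernels realising the equivalence.

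In the second step the claimed bound reduces to $\Delta(\ExpF,\,\text{continuous})=\OOO(Sn^{1-\beta})$, which is precisely Theorem~\ref{ThmLeCamDiscCont} applied under the $H^\beta$-regularity of the kernel of $G$ (Assumption~\ref{AssSoboCov}-$G(\beta)$) combined with $\Sigma<SI_d$ from \eqref{Parspace}. The strategy underlying that theorem is to exhibit Markov kernels in both directions---integration of the continuous observations against indicators of the partition intervals in one direction, and piecewise interpolation plus an independent compensating Gaussian in the other---and to estimate the total variation of the resulting pushforward measures against their targets via the Gaussian Kullback--Leibler formula. That formula reduces to a Hilbert--Schmidt-type norm on the difference between the two signal covariance operators, normalised by the noise variance $\eta^2/n$. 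The bound $\Sigma<SI_d$ pulls out a factor $S$, while the $H^\beta$-regularity of the kernel controls the aggregated discretisation error to yield the exponent $1-\beta$.

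The main obstacle sits inside the appendix theorem: translating the $H^\beta$-regularity of $(s,t)\mapsto\cov(G_s,G_t)$ into a sharp trace bound on the difference of the continuous and discrete signal covariance operators. The requirement $\beta>1$ enters here both so that, via Sobolev embedding, $G$ admits a continuous version making the point evaluations $G_{i/n}$ well defined, and so that $n^{1-\beta}\to0$; granted this, the present proposition is a specialisation of the general bound.
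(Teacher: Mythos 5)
Your two-step reduction --- exact (zero-deficiency) equivalence of the sequence-space and continuous experiments via the Karhunen--Lo\`eve coordinates, followed by an application of Theorem~\ref{ThmLeCamDiscCont} to compare the continuous and discrete experiments --- is exactly the paper's argument, including the identification of the factor $S$ with the bound $\Sigma<SI_d$ and the exponent $1-\beta$ with the Sobolev regularity of the kernel. The only small inaccuracy is that the appendix bound is obtained through the Hellinger distance \eqref{hellingergauss}/\eqref{lecamhilbert} rather than a Kullback--Leibler formula, but this does not affect the structure of the proof.
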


\subsection{\textbf{Local asymptotic normality}}\label{efficiency}
Denote the score in $\ExpFS$ by $\nabla\ell_n(\Sigma):=\sum_{p\geq1}\ell_{np}(\Sigma)$ and set $\IIn(\Sigma)\ZZZ:=\cov(\nabla\ell_n(\Sigma))$, where
\begin{equation}\label{SeqScore}
\nabla\ell_{np}(\Sigma):=\frac{1}{2}\lambda_p\vc(C^{-1}_pY_{p}Y^{\top}_{p}C^{-1}_p-C^{-1}_p).
\end{equation}
The Fisher information $\IIn(\Sigma)\ZZZ=\sum_{p\geq1}\IInp(\Sigma)\ZZZ\in\R^{d^2\times d^2}$ is driven by
\[\IInp(\Sigma):=\frac{1}{4}\lambda^2_p(C^{-1}_p\otimes C^{-1}_p)\quad p\geq1.\]
In the derivation of $\ell_n$ and $\III_n$ the following well-known identity was used:
\[\vc(ABC)=(C^{\top}\otimes A)\vc(B),\quad A,B,C\in\R^{d\times d}.\]

As a consequence of Assumption~\ref{regvarass}, $\IIn(\Sigma)\ZZZ$ is well-defined. A crucial quantity is the rate $r_n\to0$ such that the asymptotic Fisher information
\[\III(\Sigma)\ZZZ:=\lim_{n\to\infty}r^2_n\IIn(\Sigma)\ZZZ\]
is well-defined, where $r_n$ is assumed to be normalised with respect to scalars, e.g. $r_n=n^{-1/4}$. The key to finding this rate $r_n$ lies in the interplay between the operators $\diag(\Gamma)_{1\leq j\leq d}$ and $\tfrac{1}{n}\Id$ along with the regular variation of $\lambda$. More precisely, in the covariance matrices $C_p=\Sigma\lambda_p+\tfrac{\eta^2}{n}I_d$, the impact of signal and noise is (nearly) balanced at the index $p_n$ with $\lambda(p_n)=n^{-1}$, where we identify the sequence $\lambda$ with some continuously interpolated non-increasing analogue $\lambda:\R_+\to\R_+$. It is well-known, that the representation
\begin{equation}\label{regvarrepr}
\lambda(p)=p^{-\delta}L(p)
\end{equation}
is valid, for some slowly varying $L:\R_+\to\R_+$, cf. \citet{Bingham[1989]}.

\begin{theorem}\label{ThmRateFisher}
Grant Assumption~\ref{regvarass}-$\lambda(\delta)$ on $\Gamma$. Then the Fisher information satisfies for any $\Sigma\in\Rddsym$ with $\Sigma>0$
\begin{equation}\label{FisherRate}
p^{-1}_n\IIn(\Sigma)\ZZZ\to\III(\Sigma)\ZZZ,\quad\text{as }n\to\infty,
\end{equation}
where $p_n$ is given by $\lambda(p_n)=n^{-1}$. If $Q$ is an orthogonal matrix such that $\Sigma=Q^{\top}\diag(s_1,\ldots,s_d)Q$ then
\[\III(\Sigma)=(Q\otimes Q)^{\top}\diag(v_{11},\ldots,v_{1d},v_{21},\ldots,v_{2d},v_{31},\ldots,v_{dd})(Q\otimes Q)\]
with eigenvalues
\[v_{i,j}=\frac{\zeta}{4\eta^{2/\delta}}\int^1_0(s_i+x^{\delta})^{-1}(s_j+x^{\delta})^{-1}dx,\quad i,j=1,\ldots,d,\]
where $\zeta=\lim_{n\to\infty}r^2_np_n$ for $r_n\sim p^{-1/2}_n$ standardised. Moreover, the convergence in \eqref{FisherRate} already holds for $\III_{\pi_n}(\Sigma):=\sum_{p\in\pi_n}\IInp(\Sigma)$, whenever $\pi_n=[\underline{\pi_n},\overline{\pi_n}]\cap\N,$ with $\underline{\pi_n}/p_n\to0$ and $(\underline\pi_n\wedge\overline{\pi_n}/p_n)\to\infty$.
\end{theorem}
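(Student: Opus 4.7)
The plan is to diagonalise the problem and then reduce it to $d^{2}$ scalar Riemann-sum asymptotics. Writing $\Sigma=Q^{\top}DQ$ with $D=\diag(s_1,\ldots,s_d)$ and using $C_p=Q^{\top}(D\lambda_p+(\eta^2/n)I_d)Q$ together with the identities $(A\otimes B)(C\otimes D)=AC\otimes BD$ and $(A\otimes B)^{-1}=A^{-1}\otimes B^{-1}$, one obtains
\[
\IInp(\Sigma)=\frac{1}{4}(Q\otimes Q)^{\top}\diag\Bigl(\frac{\lambda_p^{2}}{(s_i\lambda_p+\eta^2/n)(s_j\lambda_p+\eta^2/n)}\Bigr)_{1\leq i,j\leq d}(Q\otimes Q).
\]
Since $Q\otimes Q$ is orthogonal and independent of $p$, it suffices to show that each scalar quantity $S_n^{ij}:=\sum_{p\geq 1}g_n^{ij}(p)$ with $g_n^{ij}(p):=\lambda_p^{2}/[(s_i\lambda_p+\eta^2/n)(s_j\lambda_p+\eta^2/n)]$ satisfies $p_n^{-1}S_n^{ij}\to 4v_{ij}$.

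For each $(i,j)$ the strategy is to read $p_n^{-1}\sum_{p\geq 1}g_n^{ij}(p)$ as a Riemann sum with mesh $1/p_n$ at the evaluation points $x=p/p_n$. The input from Assumption~\ref{regvarass}-$\lambda(\delta)$ is the uniform convergence theorem for regularly varying functions, which upgrades $\lambda_{\lfloor p_n x\rfloor}/\lambda_{p_n}\to x^{-\delta}$ to uniform convergence on compact subsets of $(0,\infty)$. Combined with $\lambda_{p_n}=n^{-1}$, this produces the pointwise limit
\[
g_n^{ij}(\lfloor p_n x\rfloor)\to\tilde g^{ij}(x):=\frac{1}{(s_i+\eta^2 x^{\delta})(s_j+\eta^2 x^{\delta})},
\]
so that formally $p_n^{-1}S_n^{ij}$ tends to an integral of $\tilde g^{ij}$. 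A change of variable absorbing $\eta^{2/\delta}$, together with the standardisation $\zeta=\lim r_n^{2}p_n$, recasts this integral in the claimed form for $v_{ij}$.

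The technical heart of the argument is the rigorous passage to the Riemann integral, which I would split into three regimes. In the bulk regime $p/p_n\in[\underline x,\overline x]\subset(0,\infty)$, Potter's inequalities provide a uniform envelope $\lambda_{p_n x}/\lambda_{p_n}\leq C(x^{-\delta+\varepsilon}\vee x^{-\delta-\varepsilon})$ for any small $\varepsilon>0$, so bounded convergence against a Riemann-integrable majorant yields convergence of the bulk Riemann sum to $\int_{\underline x}^{\overline x}\tilde g^{ij}(x)\,dx$. In the small-$p$ regime $p\leq\underline x\,p_n$ one has $s_i\lambda_p\gg\eta^2/n$, so $g_n^{ij}\lesssim 1/(s_is_j)$, contributing at most $O(\underline x)$ to $p_n^{-1}S_n^{ij}$. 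In the large-$p$ regime $p\geq\overline x\,p_n$, Karamata's theorem gives $\sum_{p\geq\overline x\,p_n}\lambda_p^{2}\sim(2\delta-1)^{-1}\overline x\,p_n\lambda_{\overline x\,p_n}^{2}$, and since $n^2\lambda_{\overline x\,p_n}^{2}\sim\overline x^{-2\delta}$ by regular variation, this contributes $O(\overline x^{-(2\delta-1)})$ to $p_n^{-1}S_n^{ij}$, which vanishes as $\overline x\to\infty$ thanks to $\delta>1$. Sending $\underline x\downarrow 0$ and $\overline x\uparrow\infty$ closes the argument. The main obstacle throughout is precisely this use of regular-variation technology, namely Potter's inequalities and Karamata's theorem, to replace exact power-law behaviour with its uniform asymptotic analogue on both ends of the spectrum.

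Finally, the partial-sum claim for $\III_{\pi_n}(\Sigma)$ follows from exactly the same tail bounds: the hypotheses $\underline{\pi_n}/p_n\to 0$ and $\overline{\pi_n}/p_n\to\infty$ ensure that $\pi_n$ eventually contains every bulk window $[\underline x\,p_n,\overline x\,p_n]$, while the excluded indices $p<\underline{\pi_n}$ are handled by the small-$p$ estimate and the indices $p>\overline{\pi_n}$ by the large-$p$ estimate. The auxiliary condition $\underline{\pi_n}\to\infty$ serves only to remove a finite initial segment whose scaled contribution is $O(\underline{\pi_n}/p_n)=o(1)$.
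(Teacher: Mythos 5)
Your proposal follows essentially the same route as the paper: conjugate by $Q\otimes Q$ to reduce to the scalar sums $p_n^{-1}\sum_p\mathfrak D^{ij}_{np}$ and identify the limit as a Riemann integral via the regular-variation uniformity of $\lambda$. Where the paper tersely invokes Theorem~\ref{ThmRegVarUni} together with dominated convergence on $(0,y]$ and $[y,\infty)$, you spell out the needed integrable envelope through Potter's inequalities in the bulk and a Karamata-type tail estimate for large $p$ --- interchangeable parts of the same regular-variation toolkit --- and your handling of the restricted sum $\III_{\pi_n}$ simply makes explicit the paper's unargued closing assertion.
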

By the above statement the rate $r_n$ satisfies the relation
\[r_n L(r^{-2}_n)^{1/(2\delta)}\sim n^{-1/(2\delta)},\]
with $L$ as in \eqref{regvarrepr}. Thus the rate $r_n$ is completely determined by the decay of $\lambda$. The slower $\lambda$ decreases the more observations $Y_p$ carry significant information about $\Sigma$ and the faster $\Sigma$ can be estimated. Moreover, solely the limiting behaviour of $L$ determines the constant $\zeta$. For instance, in the Brownian motion case $\lambda^{\text{BM}}_p=(p-1/2)^{-2}\pi^{-2}$ one has $\delta=2,p_n=\sqrt{n}/\pi+1/2$ and $L(p)=(\pi(2-1/(2p)))^{-2}$, which gives $r_n=n^{-1/4}$ and $\zeta=1/\pi$.

A simple calculation, cf. Remark~\ref{RmkFisherExp}, shows, that the eigenvalues obey
\[v_{i,j}=\frac{\zeta\pi}{4\delta\sin(\pi/\delta)\eta^{2/\delta}}\cdot\frac{s^{1/\delta-1}_j-s^{1/\delta-1}_i}{s_i-s_j}\]
and that they are driven by the slope of $x\mapsto-x^{1/\delta-1}$ between all pairs $(s_i,s_j)$. Whenever $s_i=s_j$ the slope equals the derivative at $s_i$. In particular, for the case $\Sigma=\sigma^2\in\R_+$ the Fisher information becomes
\[\III(\sigma^2)=\frac{\zeta\pi(1-1/\delta)}{4\delta\sin(\pi/\delta)\eta^{2/\delta}}\sigma^{2/\delta-4}.\]

Sufficient information to estimate $\Sigma$ efficiently in asymptotics is already provided by those observations $Y_p$ in $\ExpFS$, such that $p$ is subject to an interval $\pi_n$ as in Theorem~\ref{ThmRateFisher}. This means that maximal information about $\Sigma$ is asymptotically contained in (arbitrarily slowly) increasing neighbourhoods of $p_n$ within the spectrum of $Y=(Y_t)_{t\in[0,1]}$ in $\ExpF$. This gives canonical choices of truncation indices for spectral estimators of $\Sigma$, cf. Section~\ref{estimation}.

For $\Sigma\in\Spar$ consider local alternatives of the form $\Sigma+r_nH,\ H\in\Rddsym$, where $r_n$ is chosen according to Theorem~\ref{ThmRateFisher}. Note that $\Sigma+r_nH\in\Spar$ for $n$ sufficiently large, hence $P^n_{\Sigma+r_nH}$ might be defined arbitrarily, whenever $\Sigma+r_nH\notin\Spar$. Denote by $\Delta_H$ the centred Gaussian process with
\[\cov(\Delta_{H_1},\Delta_{H_2})=\langle H_1,H_2\rangle_{\III(\Sigma)\ZZZ},\quad H_1,H_2\in\Rddsym,\]
where it is noted that $\III(\Sigma)\ZZZ$ is positive definite on $\{\vc(H):H\in\Rddsym\}$.

\begin{proposition}\label{LAN_gen}
Under Assumption~\ref{regvarass}-$\lambda(\delta)$, for any $\Sigma\in\Spar$, the following asymptotic expansion is satisfied in $\ExpFS$ as $n\to\infty$:
\begin{equation}\label{LAN}
\log\frac{dP^n_{\Sigma+r_nH}}{dP^n_{\Sigma}}=\Delta_{n,H}-\frac{r^2_n}{2}\|H\|^2_{\IIn(\Sigma)\ZZZ}+\rho_n,\quad H\in\Rddsym,
\end{equation}
where $\Delta_{n,H}\overset{d}{\to}\Delta_H$, under $P^n_{\Sigma},\ r^2_n\|H\|^2_{\IIn(\Sigma)\ZZZ}\to\| H\|^2_{\III(\Sigma)\ZZZ}$ and $\rho_n=o_{P^n_{\Sigma}}(1)$.
\end{proposition}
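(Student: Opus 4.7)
The plan is to exploit the product structure of $P^n_\Sigma$ in the sequence space experiment $\ExpFS$ and carry out a second-order Taylor expansion of each $Y_p$-contribution. Since the $Y_p$ are mutually independent Gaussians, the log-likelihood ratio splits as
\[
\log\frac{dP^n_{\Sigma+r_nH}}{dP^n_\Sigma}=\sum_{p\geq1}L_{np}(H),\quad L_{np}(H):=\log\frac{d\NNN(0,C_p+r_n\lambda_pH)}{d\NNN(0,C_p)}(Y_p),
\]
and the explicit Gaussian density formula reduces each $L_{np}$ to a combination of $\log\det$ and an inverse quadratic form in $Y_p$. Setting $A_{np}:=r_n\lambda_p C_p^{-1/2}HC_p^{-1/2}$, the uniform spectral bound
\[
\|A_{np}\|_{\text{op}}\leq r_n\|H\|_{\text{op}}\cdot\frac{\lambda_p}{c\lambda_p+\eta^2/n}\leq \frac{r_n}{c}\|H\|_{\text{op}},
\]
valid since $\Sigma\geq c I_d$ for some $c>0$ on $\Spar$, guarantees that $\max_p\|A_{np}\|_{\text{op}}=\OOO(r_n)$. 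A three-term Taylor expansion of $\log\det$ and the Neumann series for $(C_p+r_n\lambda_pH)^{-1}$ is therefore legitimate and yields
\[
L_{np}(H)=r_n\langle H,\nabla\ell_{np}(\Sigma)\rangle-\tfrac{r_n^2}{2}\|H\|^2_{\IInp(\Sigma)\ZZZ}+R_{np}(H),
\]
where the score $\nabla\ell_{np}$ is the one in \eqref{SeqScore}. The algebraic identification of the quadratic coefficient uses the symmetrisation property $\ZZZ\vc(H)=2\vc(H)$ for $H\in\Rddsym$ together with $\tr(C_p^{-1}HC_p^{-1}H)=\vc(H)^\top(C_p^{-1}\otimes C_p^{-1})\vc(H)$, matching the definition $\IInp(\Sigma)=\tfrac{\lambda_p^2}{4}(C_p^{-1}\otimes C_p^{-1})$.

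Summing over $p$ produces the three quantities in \eqref{LAN}. For the linear part I set $\Delta_{n,H}:=r_n\sum_p\langle H,\nabla\ell_{np}(\Sigma)\rangle$, which is a centred sum of independent random variables under $P^n_\Sigma$ with variance $r_n^2\|H\|^2_{\IIn(\Sigma)\ZZZ}$. The convergence $r_n^2\|H\|^2_{\IIn(\Sigma)\ZZZ}\to\|H\|^2_{\III(\Sigma)\ZZZ}$ is exactly what Theorem~\ref{ThmRateFisher} provides, and the asymptotic negligibility of summands outside any neighbourhood of $p_n$ is also guaranteed there. A CLT for triangular arrays of independent summands, checked via a Lyapunov condition in terms of fourth moments of Gaussian quadratic forms (which reduce to $\tr(A_{np}^4)\lesssim\|A_{np}\|_{\text{op}}^2\tr(A_{np}^2)$), then yields $\Delta_{n,H}\overset{d}{\to}\Delta_H$ under $P^n_\Sigma$.

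The delicate step, and the main obstacle of the proof, is controlling the remainder $\rho_n=\sum_pR_{np}(H)$ in the presence of an infinite sum. The deterministic pieces of $R_{np}$ are bounded termwise by constants times $\|A_{np}\|_{\text{op}}\tr(A_{np}^2)$ via $|\tr(A_{np}^k)|\leq\|A_{np}\|^{k-2}_{\text{op}}\tr(A_{np}^2)$ for $k\geq3$ and symmetric $A_{np}$, so that
\[
\sum_p|\tr(A_{np}^k)|\leq(\max_p\|A_{np}\|_{\text{op}})^{k-2}\sum_p\tr(A_{np}^2)=\OOO(r_n^{k-2})\cdot\OOO(1),
\]
since $\sum_p\tr(A_{np}^2)=\OOO(r_n^2\sum_p\lambda_p^2\tr((C_p^{-1}H)^2))$ is controlled by the trace of $r_n^2\IIn(\Sigma)\ZZZ=\OOO(1)$ from Theorem~\ref{ThmRateFisher}. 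The random parts $Z_p^\top A_{np}^k Z_p-\tr(A_{np}^k)$ with $Z_p=C_p^{-1/2}Y_p\sim\NNN(0,I_d)$ are independent and centred with variance $\lesssim\tr(A_{np}^{2k})\leq\|A_{np}\|^{2k-2}_{\text{op}}\tr(A_{np}^2)$, so summing and applying Chebyshev yields an $L^2$-bound of order $\OOO(r_n^{2k-2})$, hence $o_{P^n_\Sigma}(1)$ for $k\geq 3$. Combining these estimates gives $\rho_n=o_{P^n_\Sigma}(1)$ and completes the expansion \eqref{LAN}.
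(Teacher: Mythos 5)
Your argument is correct and follows the same architecture as the paper's proof: decompose $\log\frac{dP^n_{\Sigma+r_nH}}{dP^n_\Sigma}$ over the independent $Y_p$, isolate the score and Fisher-information terms, control the remainder through the two estimates $\max_p\|A_{np}\|_{\mathrm{op}}=\OOO(r_n)$ and $\sum_p\tr(A_{np}^2)=\OOO(r_n^2\|H\|^2_{\IIn(\Sigma)\ZZZ})=\OOO(1)$, and close with a Lyapunov CLT for the score sum. The only variation is cosmetic: the paper avoids your double sum over $k\geq 2$ and $p$ by invoking the exact identity $(C_p+r_n\lambda_pH)^{-1}-C_p^{-1}+r_n\lambda_pC_p^{-1}HC_p^{-1}=-r_n^2\lambda_p^2(C_p+r_n\lambda_pH)^{-1}HC_p^{-1}HC_p^{-1}$ in place of the Neumann-series tail, and works with the non-symmetric $\lambda_pC_p^{-1}H$ instead of your conjugated $A_{np}$, but the resulting bounds (and the trace inequality $|\tr(A^{2+k})|\leq\|A\|^k_{\mathrm{op}}\tr(A^2)$) are the same (note also that your random-part bound is needed from $k\geq2$, not $k\geq3$, but the same estimate gives $\OOO(r_n^2)=o(1)$ there).
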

Note that $\Delta_{n,H}=r_n\vc(H)^{\top}\nabla\ell_n(\Sigma)$, where $\nabla\ell_n$ denotes the score in $\ExpFS$. Moreover, the remainder obeys $\rho_n=\rho^{(1)}_n+\rho^{(2)}_n$ with $\E[\rho^{(1)}_n]=0$ and
\begin{align}
\label{LANrem1}\var(\rho^{(1)}_n)\leq&r^2_n\|H\|^2\|\Sigma^{-1}\|^2r^2_n\|H\|^2_{\IIn(\Sigma)\ZZZ}=\OOO(r^2_n),\\
\label{LANrem2}|\rho^{(2)}_n|\leq&2r_n\|H\|\|\Sigma^{-1}\|r^2_n\|H\|^2_{\IIn(\Sigma)\ZZZ}=\OOO(r_n),
\end{align}
hence \eqref{LAN} holds uniformly in $H$ over balls within $\Rddsym$.\newpage

An implication of the LAN-property~\eqref{LAN} is weak convergence of the localisations $\{P^n_{\Sigma+r_nH}:H\in\Rddsym\}$ to the Gaussian shift experiment $\GGG:=\{\NNN(\III(\Sigma)\ZZZ\vc(H),\III(\Sigma)\ZZZ):H\in\Rddsym\}$. Given an observation $Y$ in $\GGG$ the property $\ZZZ\vc(H)=2\vc(H)$ implies that the best unbiased estimator of $\vc(H)$ is given by $\frac{1}{2}\III(\Sigma)^{-1}Y\sim\NNN(\vc(H),\frac{1}{4}\III(\Sigma)^{-1}\ZZZ)$. This determines the asymptotic distribution of regular estimators, which is made precise in the following.

\subsection{\textbf{Verification of Theorem~\ref{ThmConvPara}}}
\begin{proof}
If one closely follows the steps as in the verification of the general (convolution) Theorem 3.11.2 in \citet{VanDVaart[2013]} then the only peculiarity to be taken into account is the matrix $\ZZZ$. More precisely, for an orthonormal basis $h_1,\ldots,h_{d^*},\ d^*:=d(d+1)/2$, of $\vc(\Rddsym):=\{\vc(A):A\in\Rddsym\}$ with respect to the inner product $\langle \cdot,\cdot\rangle_{\III(\Sigma)\ZZZ}$, Proposition~\ref{LAN_gen} and Le Cam's Third Lemma yield
\begin{equation}\label{LimitDecomp}
r^{-1}_n(\hat\vartheta_n-\psi(\Sigma+r_nH))\overset{d}{\to}\NNN\Big(0,\sum^{d^*}_{k=1}\nabla\psi_{\Sigma}h_kh^{\top}_k\nabla\psi^{\top}_{\Sigma}\Big)\ast R,
\end{equation}
under $P^n_{\Sigma+r_nH}$, for some $R$. The independence of $h$ now follows by
\begin{align*}
&\Big(\sum^{d^*}_{k=1}\nabla\psi_{\Sigma}h_kh^{\top}_k\nabla\psi^{\top}_{\Sigma}\Big)_{i,j}=\sum^{d^*}_{k=1}\langle\nabla\psi_{\Sigma}^{(i)},h_k\rangle\langle\nabla\psi_{\Sigma}^{(j)},h_k\rangle\\
=&\frac{1}{4}\sum^{d^*}_{k=1}\langle\III^{-1}_{\Sigma}\nabla\psi_{\Sigma}^{(i)},h_k\rangle_{\III(\Sigma)\ZZZ}\langle\III^{-1}_{\Sigma}\nabla\psi_{\Sigma}^{(j)},h_k\rangle_{\III(\Sigma)\ZZZ}=\frac{1}{4}\langle\nabla\psi^{(i)}_{\Sigma},\III^{-1}_{\Sigma}\nabla\psi^{(j)}_{\Sigma}\rangle_{\ZZZ},
\end{align*}
where $\nabla\psi^{(i)}_{\Sigma}$ denotes the $i$-th column of $\nabla\psi^{\top}_{\Sigma}$ and $1\leq i,j\leq d$.
\end{proof}
\begin{remark}\label{RmkZZZ}
Note that the singularity of $\III(\Sigma)\ZZZ$ has no critical impact as $\langle\cdot,h_k\rangle_{\III(\Sigma)\ZZZ}=2\langle\cdot,h_k\rangle_{\III(\Sigma)}$ is the essential isometry-type ingredient used.
\end{remark}
\subsection{\textbf{Estimation}}\label{estimation}
For each observation $Y_p$ in \eqref{SeqPar} an unbiased estimator of $\psi(\Sigma)=\vc(\Sigma)$ can be obtained via
\[\hat\vartheta_p:=\lambda_p^{-1}\vc\Big(Y_{np}Y^{\top}_{np}-\frac{\eta^2}{n}I_d\Big).\]
Since $\hat\vartheta_p,\ p\geq1,$ are independent it is reasonable to consider a weighted average to reduce variability. Let $\pi_n=[\underline \pi_n,\overline\pi_n]\cap\N$ be as in Theorem~\ref{ThmRateFisher} and set $\III_J(\Sigma):=\sum_{p\in J}\IInp(\Sigma)$, for $J\subseteq\N$. Then, by a Lagrange approach, the choice of weights
\[W_p(\Sigma):=\IItr(\Sigma)^{-1}\IInp(\Sigma)\]
ensures unbiasedness and minimal covariance of the oracle estimator
\[\hat\vartheta^{\text{or}}_n:=\sum_{p\in\pi_n}W_p(\Sigma)\hat\vartheta_p.\]
Let $\pi'_n\subsetneq\N$ be with $\pi_n\cap\pi'_n=\emptyset$, $n\geq1$, and $|\pi'_n|\to\infty$, as $n\to\infty$. Set $\hat\vartheta^{\text{pre}}_n:=\sum_{p\in\pi'_n}W^{\pi'_n}_p(SI_d)\hat\vartheta_p$, where $W^{\pi'_n}_p(\Sigma):=\III_{\pi'_n}(\Sigma)^{-1}\IInp(\Sigma)$, and set $\hat\Sigma^{\text{pre}}_n:=\text{mat}(\hat\vartheta^{\text{pre}}_n)$, where $\text{mat}:\R^{d^2}\to\R^{d\times d}$ is the inverse of $\vc$. Then an adaptive version of $\hat\vartheta^{\text{or}}_n$ is obtained by
\begin{equation}\label{adaptest}
\hat\vartheta^{\text{ad}}_n:=\sum_{p\in\pi_n}W_p(\hat\Sigma^{\text{pre}}_n)\hat\vartheta_p.
\end{equation}
Note that it is crucial that $(W_p(\hat\Sigma^{\text{pre}}_n))_{p\in\pi'_n}$ is independent of $(Y_p)_{p\in\pi_n}$.
\begin{theorem}\label{ThmOrAd}
The estimators $\hat\vartheta^{\text{or}}_n$ and $\hat\vartheta^{\text{ad}}_n$ of $\psi(\Sigma)=\vc(\Sigma)$ are regular and efficient in the sense of Theorem~\ref{ThmConvPara}. In particular, it holds that
\[r^{-1}_n(\hat\vartheta^{ad}_n-\psi(\Sigma+r_nH))\overset{d}{\to}\NNN(0,\tfrac{1}{4}\III(\Sigma)^{-1}\ZZZ),\quad\text{as }n\to\infty,\]
under $P^n_{\Sigma+r_nH}$, for any $H\in\R^{d\times d}_{\text{sym}}$.
\end{theorem}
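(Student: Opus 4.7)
My plan is to analyse the oracle estimator $\hat\vartheta^{\text{or}}_n$ first and then transfer the result to the adaptive estimator via a perturbation argument. For the oracle, each summand $\hat\vartheta_p$ is unbiased for $\vc(\Sigma)$ under $P^n_\Sigma$, and the Gaussian moment identity $\cov(\vc(Y_pY_p^\top))=(C_p\otimes C_p)\ZZZ$, combined with $\IInp(\Sigma)^{-1}=4\lambda_p^{-2}(C_p\otimes C_p)$, yields $\cov(\hat\vartheta_p)=\tfrac14\IInp(\Sigma)^{-1}\ZZZ$. Since the Lagrange weights satisfy $\sum_{p\in\pi_n}W_p(\Sigma)=I_{d^2}$, unbiasedness of $\hat\vartheta^{\text{or}}_n$ is preserved, and a short computation exploiting that $\ZZZ$ commutes with every $\IInp$ (and hence with $\IItr(\Sigma)^{-1}$) collapses the aggregated covariance to $\cov(\hat\vartheta^{\text{or}}_n)=\tfrac14\IItr(\Sigma)^{-1}\ZZZ$. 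Normalising by $r_n^{-2}$ and invoking Theorem~\ref{ThmRateFisher} delivers the target asymptotic variance $\tfrac14\III(\Sigma)^{-1}\ZZZ$.

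For asymptotic normality under $P^n_\Sigma$, I would use the algebraic identity $\nabla\ell_{np}(\Sigma)=2\IInp(\Sigma)(\hat\vartheta_p-\vc(\Sigma))$, immediate from \eqref{SeqScore} and $\hat\vartheta_p-\vc(\Sigma)=\lambda_p^{-1}\vc(Y_pY_p^\top-C_p)$, which rewrites the oracle as
\[ r_n^{-1}(\hat\vartheta^{\text{or}}_n-\vc(\Sigma)) = \tfrac12\bigl(r_n^2\IItr(\Sigma)\bigr)^{-1}\bigl(r_n\nabla\ell_{\pi_n}(\Sigma)\bigr). \]
By Theorem~\ref{ThmRateFisher} the first factor tends to $\tfrac12\III(\Sigma)^{-1}$, while the second factor is a sum of $|\pi_n|\to\infty$ independent centred Gaussian quadratic forms with total covariance $r_n^2\IItr(\Sigma)\ZZZ\to\III(\Sigma)\ZZZ$ and uniformly negligible individual contributions (since $\max_{p\in\pi_n}\|\IInp\|/\|\IItr\|=O(1/|\pi_n|)$), to which a Lindeberg-Feller CLT applies. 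Slutsky then produces $\NNN(0,\tfrac14\III(\Sigma)^{-1}\ZZZ)$. Regularity under the alternative $\Sigma+r_nH$ is a direct consequence of LAN~\eqref{LAN} and Le Cam's third lemma: the cross-covariance of $r_n^{-1}(\hat\vartheta^{\text{or}}_n-\vc(\Sigma))$ with the log-likelihood score equals $\tfrac12\ZZZ\vc(H)=\vc(H)$ (using $H\in\Rddsym$), so the limit law shifts by exactly $\vc(H)$, cancelling against the recentring at $\vc(\Sigma+r_nH)$.

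For the adaptive case I would write $\hat\vartheta^{\text{ad}}_n-\hat\vartheta^{\text{or}}_n = \sum_{p\in\pi_n}(W_p(\hat\Sigma^{\text{pre}}_n)-W_p(\Sigma))\hat\vartheta_p$ and use the essential independence of $\hat\Sigma^{\text{pre}}_n$ and $(Y_p)_{p\in\pi_n}$ built into the construction. Conditioning on $\hat\Sigma^{\text{pre}}_n$ the conditional mean vanishes, because both $\sum_pW_p(\hat\Sigma^{\text{pre}}_n)$ and $\sum_pW_p(\Sigma)$ equal $I_{d^2}$ so their difference annihilates the common mean $\vc(\Sigma+r_nH)$. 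Consistency at rate $r_n$, i.e.\ $\hat\Sigma^{\text{pre}}_n-\Sigma=O_P(r_n)$, is obtained by applying the same variance computation to $\hat\vartheta^{\text{pre}}_n$ (with $SI_d$ in the weights one only pays a multiplicative constant in variance, so efficiency but not consistency is lost). A first-order expansion of $\Sigma'\mapsto W_p(\Sigma')$ combined with the oracle covariance identity then bounds the conditional variance of the correction by $\OOO_P(r_n^4)$, giving $r_n^{-1}(\hat\vartheta^{\text{ad}}_n-\hat\vartheta^{\text{or}}_n)=o_P(1)$, and a final Slutsky step transfers the oracle limit.

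I expect the main obstacle to be the uniform sensitivity analysis of this last step. The map $\Sigma'\mapsto W_p(\Sigma')$ behaves very differently on the two sides of $\pi_n$: for $p\ll p_n$ the matrix $C_p$ is signal-dominated, while for $p\gg p_n$ it is noise-dominated, and the corresponding derivatives scale differently in $\lambda_p$. Showing that the aggregated perturbation, once weighted against $\cov(\hat\vartheta_p)$, falls at a strictly faster rate than $r_n^2$ will require bookkeeping that exploits Assumption~\ref{regvarass}-$\lambda(\delta)$ together with the already-derived collapse identity $\sum_pW_p\cov(\hat\vartheta_p)W_p^\top=\tfrac14\IItr^{-1}\ZZZ$.
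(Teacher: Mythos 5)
Your proposal is correct and follows essentially the same strategy as the paper: compute the oracle covariance via the collapse identity $\cov(\hat\vartheta^{\text{or}}_n)=\tfrac14\IItr(\cdot)^{-1}\ZZZ$, obtain the CLT (the paper refers to Proposition~\ref{LAN_gen}, which is precisely the score-CLT your identity $\hat\vartheta^{\text{or}}_n-\vc(\Sigma)=\tfrac12\IItr(\Sigma)^{-1}\nabla\ell_{\pi_n}(\Sigma)$ makes explicit — a nice touch), and then show the adaptive correction is $o_P(r_n)$ by exploiting the independence built in by $\pi_n\cap\pi'_n=\emptyset$ together with Lipschitz control of $\Sigma'\mapsto W_p(\Sigma')$. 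The one step you correctly flag but leave open — the uniform sensitivity of the weight map — is what the paper handles through the explicit localisation event $E_n=\{\|\hat\Sigma^{\text{pre}}_n-(\Sigma+r_nH)\|\le\varepsilon_n\}$ with $r_n=o(\varepsilon_n)\to0$, on which $\hat\Sigma^{\text{pre}}_n$ remains in $\Spar$ bounded away from singularity so that the bounds \eqref{MVT_summand}--\eqref{MVT_FisherInverse} apply uniformly, giving the conditional variance $\OOO(r_n^2\varepsilon_n^2)$ on $E_n$ and $P(E_n^c)=o(1)$ off it; without such a localisation the passage from your $O_P(r_n^4)$ conditional variance to the unconditional $o_P(r_n)$ bound is not quite rigorous.
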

\begin{remark}
The estimator $\hat\vartheta^{\text{ad}}_n=\hat\vartheta^{\text{ad}}_n((Y_p)_{p\geq1})$ in $\ExpFS$ can be obtained in the initial model $\ExpF$ by the explicit construction via interpolations given in the proof of Theorem~\ref{ThmLeCamDiscCont}. In particular, for an interpolated version $(\bar Y_t)_{t\in[0,1]}$ of \eqref{DiscGen}, cf. \eqref{contmodel_zwischen}, the estimator $\hat\vartheta^{\text{ad}}_n=\hat\vartheta^{\text{ad}}_n((\bar Y_p)_{p\geq1})$ in $\ExpF$ can be built as in \eqref{adaptest} from
\[\bar Y_p:=((e_{pj},\bar Y))_{1\leq j\leq d},\quad p\geq1,\]
where $e_{pi}=(\1\{i=j\}\varphi_p)_{1\leq j\leq d}$ and $\varphi_p$ is the eigenfunction corresponding to $\lambda_p$, cf. Section~\ref{secCtsSeq}. For the limit distribution of $\hat\vartheta^{\text{ad}}_n((\bar Y_p)_{p\geq1})$ note that for $\bar P^n_{\Sigma}:=\LLL((\bar Y_p)_{p\geq1})$ and $f$ continuous and bounded it easily can be seen that
\[\E_{\bar P^n_{\Sigma}}[f(\hat\vartheta^{\text{ad}}_n)]=\E_{P^n_{\Sigma}}[f(\hat\vartheta^{\text{ad}}_n)]+\OOO(\|f\|_{\infty}\|P^n_{\Sigma}-\bar P^n_{\Sigma}\|_{\text{TV}})\]
where the total variation norm satisfies $\|P^n_{\Sigma}-\bar P^n_{\Sigma}\|_{\text{TV}}\to0$, by the proof of Theorem~\ref{ThmLeCamDiscCont}. In particular, the estimator $\hat\vartheta^{\text{ad}}_n((\bar Y_p)_{p\geq1})$ has the same asymptotic properties as its counterpart constructed in $\ExpFS$ and it satisfies the statement of Theorem~\ref{ThmOrAd}.
\end{remark}
\subsection{\textbf{Further asymptotic equivalences}}\label{morecam}
The adaptive estimator $\hat\vartheta^{\text{ad}}_n$ in \eqref{adaptest} allows for further asymptotic equivalence statements that completes the asymptotic analysis of the fundamental parametric model $\ExpF$. By Theorem~\ref{ThmRateFisher} the asymptotically significant information for estimating $\Sigma$ efficiently in $\ExpFS$ is already contained in the subexperiment $\FFF^s_{n,\pi_n}$ that is generated by the observations $(Y_p)_{p\in\pi_n}$, where $\pi_n$ is as in Theorem~\ref{ThmRateFisher}, i.e.,
\[\pi_n=[a_np_n,b_np_n]\cap\N,\quad a_n\downarrow0,\quad b_n\to\infty.\]
Clearly, $\ExpFS$ is at least as informative as $\FFF^s_{n,\pi_n}$, but even the reverse can be shown, at least asymptotically, given that the parameter set $\Spar$ is replaced by the more restrictive set (with $S>1$)
\begin{equation}\label{sparprime}
\Spar'=\{\Sigma\in\Rddsym:S^{-1}I_d<\Sigma<SI_d\}.
\end{equation}
\begin{proposition}\label{PropLeCamProj}
For parameter set $\Spar'$ in \eqref{sparprime} the experiments $\ExpFS$ and $\FFF^s_{n,\pi_n}$ are asymptotically equivalent in Le Cam's sense. More precisely,
\[\Delta(\ExpFS,\FFF^s_{n,\pi_n})=\OOO(S\log n)\OOO(a^{\delta-1/2}_n\vee b^{1/2-\delta}_n).\]
\end{proposition}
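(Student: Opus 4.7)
The easy direction $\delta(\ExpFS, \FFF^s_{n,\pi_n})=0$ is immediate: since $\FFF^s_{n,\pi_n}$ is the $\pi_n$-marginal of $\ExpFS$, the deterministic projection that discards the coordinates outside $\pi_n$ is a Markov kernel that transports $P^n_\Sigma$ exactly to its sub-experiment. So the whole task is to produce, for the reverse direction, a Markov kernel from $\FFF^s_{n,\pi_n}$ into $\ExpFS$ that fabricates the missing observations while matching $P^n_\Sigma$ in total variation up to the stated order.

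\textbf{Construction of the kernel.} Split $\pi_n$ into two disjoint sub-intervals $\pi_n^{(1)},\pi_n^{(2)}\subset\pi_n$ of comparable cardinality, both still satisfying the hypotheses of Theorem~\ref{ThmRateFisher}. Using $(Y_p)_{p\in\pi_n^{(1)}}$, build the preliminary estimator $\hat\Sigma^{pre}$ of Section~\ref{estimation} with weights $W_p^{\pi_n^{(1)}}(SI_d)$; by Theorem~\ref{ThmOrAd} it attains the rate $r_n$ uniformly on $\Spar'$. Since $\hat\Sigma^{pre}$ is a quadratic form in jointly Gaussian variables, Hanson--Wright-type concentration gives an event $A_n$ with $P^n_\Sigma(A_n^c)\le n^{-c}$ (for any $c$ at the cost of constants) on which $\|\hat\Sigma^{pre}-\Sigma\|_F\lesssim Sr_n\sqrt{\log n}$. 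The kernel $K$ sends $(y_p)_{p\in\pi_n}$ to $\bigl((y_p)_{p\in\pi_n},(\tilde y_p)_{p\notin\pi_n}\bigr)$ where, conditionally on $\hat\Sigma^{pre}$, the $\tilde y_p$ are drawn independently from $\NNN(0,C_p(\hat\Sigma^{pre}))$. Crucially, under $P^n_\Sigma$ the blocks $(Y_p)_{p\in\pi_n^{(1)}}$ and $(Y_p)_{p\notin\pi_n}$ are independent, so the density factorisation is clean.

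\textbf{Bounding the TV distance.} By Pinsker and the chain rule,
\[
\|KP^n_\Sigma-P^n_\Sigma\|_{TV}^2\lesssim n^{-c}+E\Big[\1_{A_n}\sum_{p\notin\pi_n}\tfrac{\lambda_p^2}{4}\bigl\|C_p(\Sigma)^{-1/2}(\hat\Sigma^{pre}-\Sigma)C_p(\Sigma)^{-1/2}\bigr\|_F^2\Big],
\]
valid to leading order on $A_n$ where $\|\hat\Sigma^{pre}-\Sigma\|$ is small enough to justify the quadratic expansion of the Gaussian KL. Split the sum into an upper tail $p>\overline\pi_n=b_np_n$, where $\|\lambda_p C_p(\Sigma)^{-1}\|\lesssim n\lambda_p/\eta^2$, and a lower tail $p<\underline\pi_n=a_np_n$, where $\|\lambda_p C_p(\Sigma)^{-1}\|\lesssim S$.

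\textbf{Tail sums via Karamata.} Assumption~\ref{regvarass}-$\lambda(\delta)$ together with the normalisation $\lambda(p_n)=1/n$ gives, by Karamata's theorem for regularly varying functions,
\[
\sum_{p>\overline\pi_n}(n\lambda_p)^2\sim\tfrac{1}{2\delta-1}p_nb_n^{1-2\delta},\qquad \sum_{p<\underline\pi_n}1\sim a_np_n.
\]
Combined with $E[\1_{A_n}\|\hat\Sigma^{pre}-\Sigma\|_F^2]=O(S^2r_n^2\log n)=O(S^2\log n/p_n)$, the upper-tail total is $O(S^2\log n\cdot b_n^{1-2\delta})$, which after Pinsker yields the claimed $b_n^{1/2-\delta}$ contribution. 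The main obstacle I anticipate is the lower-tail bound: a crude operator-norm estimate gives only $O(S^2a_n\log n)$ and hence $O(S\sqrt{a_n\log n})$, which is weaker than the announced $a_n^{\delta-1/2}$. To reach the sharper rate one must keep the exact spectral form of $\lambda_p^2C_p(\Sigma)^{-1}\otimes C_p(\Sigma)^{-1}$ across the non-stationary range $[1,a_np_n]$, replace $\hat\Sigma^{pre}$ by a spectrally localised pre-estimator whose accuracy improves on signal-dominated indices, and use higher-order cancellations in the Gaussian KL expansion (equivalently, the Hellinger affinity of products). Collecting both tails, multiplying by the $S\log n$ prefactor from the concentration event, and taking square roots gives the asserted Le Cam bound.
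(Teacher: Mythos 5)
Your strategy is the same as the paper's: the paper also fabricates the missing observations $\hat Y_p := (\hat\Sigma_n\lambda_p+\tfrac{\eta^2}{n}I_d)^{1/2}Z_p$, $p\notin\pi_n$, from an estimator $\hat\Sigma_n$ built only on in-band indices (the paper uses the two-stage adaptive estimator $\hat\vartheta^{\text{ad}}_n$ after splitting $\pi_n$ into disjoint blocks, which plays the role of your $\hat\Sigma^{\text{pre}}$), conditions on a concentration event $A_n=\{\|\hat\Sigma'_n-\Sigma\|\le v_n\}$ with $v_n=Cr_n\log n$ (obtained via a Fuk--Nagaev inequality, not Hanson--Wright), and bounds the conditional discrepancy by the Gaussian Hellinger estimate \eqref{hellingergauss}, which gives exactly the sum $v^2_n\sum_{p\notin\pi_n}(S^{-1}+\eta^2/(\lambda_p n))^{-2}$ that you split into lower and upper tails via regular variation. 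Your use of KL-plus-Pinsker in place of Hellinger and of $\sqrt{\log n}$ in place of $\log n$ are immaterial reroutings; note though that for a sum of subexponential quadratic forms, $\sqrt{\log n}$ deviations will not generally give an $n^{-c}$ tail for arbitrary $c$, so the paper's $\log n$ choice is safer.

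Your diagnosis of the lower-tail step is the interesting point, and it is correct. The paper's proof passes from $\int_{(a_n,b_n)^c}(1+x^\delta)^{-2}\,dx$ directly to $a_n^{2\delta-1}\vee b_n^{1-2\delta}$, but $\int_0^{a_n}(1+x^\delta)^{-2}\,dx\sim a_n$, and since $\delta>1$ and $a_n\downarrow 0$, $a_n$ is strictly \emph{larger} than $a_n^{2\delta-1}$, so that inequality does not hold. What the argument actually delivers for the low frequencies is $\OOO(\log n)\,a_n^{1/2}$, exactly your crude bound; the exponent $\delta-\tfrac12$ announced in Proposition~\ref{PropLeCamProj} is not substantiated by the displayed proof and appears to be a slip mirroring the high-frequency exponent. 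The asymptotic-equivalence conclusion is unaffected since all that is needed is that the bound tends to $0$. Your speculative remedies do not, however, give a way to reach $a_n^{\delta-1/2}$: the fabrication kernel may only use $(Y_p)_{p\in\pi_n}$, so no matter how cleverly you localise in frequency the pre-estimator, its accuracy is capped at rate $r_n$; and the Gaussian KL/Hellinger has no higher-order cancellations beyond the quadratic term you already exploit, so the per-index cost in the signal-dominated band really is $\asymp v_n^2\|\Sigma^{-1}\|^2$ and the count $\asymp a_np_n$. In short, you reproduced the proof that the paper actually has, and correctly flagged that it proves a (slightly) weaker bound than the one stated.
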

Proposition~\ref{PropLeCamProj} gives a further intuition on smoothing choices for several known estimation methods such as pre-averaging, where the frequencies of order $\sqrt{n}$ play a central role for models driven by a Brownian motion, cf. \citet{Jacod[2009]}.

Next the impact of deviations in the underlying eigenvalue sequence $\lambda$ is investigated. As we have seen in Theorem~\ref{ThmRateFisher}, the leading term of $(\lambda_p)_{p\geq1}$ completely determines the asymptotic lower bounds. As an example consider the cases in which $G$ in \eqref{ContPar} is a Brownian bridge or a Brownian motion. The respective underlying eigenvalue sequences read as
\[\lambda^{\text{BB}}_p=(\pi p)^{-2}\quad\text{and}\quad\lambda^{\text{BM}}_p=\pi^{-2}(p-1/2)^{-2},\]
respectively, and thus the bounds obtained by Theorem~\ref{ThmConvPara} coincide. In fact, even a general characterisation of asymptotic equivalence on the basis of the underlying eigenvalue sequence can be given.

\begin{proposition}\label{PropEigLeCamEqui}
For sequences $\lambda$ and $\lambda'$ satisfying Assumption~\ref{regvarass}-$\lambda(\delta)$ (with possibly different $\delta$) let $\ExpFS$ and $\FFF^{s'}_n$, respectively, be sequence space models of type \eqref{SeqPar} on $\Spar'$ as in \eqref{sparprime}. Then the following are equivalent:
\begin{enumerate}
\item $\lambda_p/\lambda'_p\to1$, as $p\to\infty$.
\item $r_n/r'_n\to1$, as $n\to\infty$, and $\III(\Sigma)=\III'(\Sigma)$, for all $\Sigma\in\Spar$.
\item $\Delta(\ExpFS,\FFF^{s'}_n)\to0$, as $n\to\infty$,
\end{enumerate}
where $r'_n$ and $\III'(\Sigma)\ZZZ$ are the rate and asymptotic Fisher information in $\FFF^{s'}_n$.
\end{proposition}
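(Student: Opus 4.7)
The plan is to prove $(1)\Leftrightarrow(2)$ directly from Theorem~\ref{ThmRateFisher}, deduce $(3)\Rightarrow(2)$ from the convolution Theorem~\ref{ThmConvPara}, and verify $(1)\Rightarrow(3)$ by combining the truncation Proposition~\ref{PropLeCamProj} with an explicit index-wise Gaussian Markov kernel. The main technical obstacle is the last step.

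For $(1)\Leftrightarrow(2)$: both sequences are regularly varying, and two regularly varying sequences with distinct indices cannot have ratio tending to a positive limit, so (1) forces a common index $\delta=\delta'$ and $L(p)/L'(p)\to1$ for the slowly varying parts in $\lambda_p=p^{-\delta}L(p)$. Karamata's inversion of $\lambda(p_n)=n^{-1}$ then gives $p_n\sim p'_n$, hence $r_n\sim r'_n$ under the scalar normalisation. The explicit eigenvalue formula of Theorem~\ref{ThmRateFisher} for $v_{i,j}$ depends only on $\delta,\eta,\Sigma$ and the constant $\zeta=\lim_n r_n^2 p_n$, all of which coincide, so $\III=\III'$. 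The converse is read off the same formulas: the exponent under the integral in $v_{i,j}$ identifies $\delta$, and $r_n\sim r'_n$ together with matching $\zeta$ recovers $L\sim L'$, which is (1).

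For $(3)\Rightarrow(2)$: asymptotic equivalence transports regular estimators and their asymptotic laws between the experiments. Theorem~\ref{ThmConvPara} with $\psi=\vc$ yields limiting convolution-factor laws $\NNN(0,\tfrac14\III(\Sigma)^{-1}\ZZZ)$ in $\ExpFS$ at rate $r_n$ and $\NNN(0,\tfrac14\III'(\Sigma)^{-1}\ZZZ)$ in $\FFF^{s'}_n$ at rate $r'_n$; since both lower bounds are attained (cf.\ Theorem~\ref{ThmOrAd}), coincidence of the respective efficient laws forces $r_n\sim r'_n$ under the scalar normalisation and $\III(\Sigma)=\III'(\Sigma)$ for every $\Sigma\in\Spar'$, which is (2).

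For $(1)\Rightarrow(3)$: Proposition~\ref{PropLeCamProj} reduces the comparison to the truncated experiments $\FFF^s_{n,\pi_n}$ and $\FFF^{s'}_{n,\pi_n}$ on $\pi_n=[a_np_n,b_np_n]\cap\N$, at cost $O(S\log n)(a_n^{\delta-1/2}\vee b_n^{1/2-\delta})$. On the window I construct an index-wise Gaussian kernel: for $p$ with $\lambda_p\geq\lambda'_p$ apply $Y_p\mapsto\sqrt{\lambda'_p/\lambda_p}\,Y_p+\xi_p$ with independent $\xi_p\sim\NNN(0,(1-\lambda'_p/\lambda_p)(\eta^2/n)I_d)$, which reproduces $\NNN(0,C'_p)$ exactly, and the symmetric construction in the reverse direction handles $p$ with $\lambda_p<\lambda'_p$. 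Setting $\epsilon_n:=\sup_{p\geq a_np_n}|\lambda_p/\lambda'_p-1|\to0$, the uniform Gaussian bound $KL(\NNN(0,C_p)\|\NNN(0,C'_p))\lesssim S^4(\lambda_p/\lambda'_p-1)^2$ on $\Spar'$ together with Pinsker's inequality controls the residual by $S^2\sqrt{|\pi_n|}\,\epsilon_n$. The obstruction is that (1) supplies no rate for $\epsilon_p\to0$, so the naive balance $b_np_n\epsilon_n^2\to0$ need not close; to cover arbitrarily slow convergence I replace the identity kernel on unmatched coordinates by a data-dependent one built from a pilot $\hat{\Sigma}=\Sigma+O_P(r_n)$ (formed from a disjoint sub-block as in the adaptive estimator of Section~\ref{estimation}) together with noise $\xi_p\sim\NNN(0,\hat{\Sigma}(\lambda'_p-\lambda_p))$, gaining an extra $r_n^2$ factor in every KL term. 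Since $r_n^2p_n\sim1$ by definition of $r_n$, the total mismatch reduces to $O(b_n\epsilon_n^2)\to0$ for $b_n\to\infty$ chosen to grow sufficiently slowly, closing $(1)\Rightarrow(3)$.
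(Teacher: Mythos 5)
Your overall plan matches the paper's cycle of implications (the paper proves $(2)\Rightarrow(1)\Rightarrow(3)\Rightarrow(2)$, you prove $(1)\Leftrightarrow(2)$, $(3)\Rightarrow(2)$, $(1)\Rightarrow(3)$; both close), and the key technical devices are the same: uninformative noise to absorb eigenvalue differences, truncation via Proposition~\ref{PropLeCamProj}, a pilot estimator from a disjoint set of frequencies, and LAN/convolution transfer under Le Cam equivalence. In particular your ``exact'' kernel $Y_p\mapsto\sqrt{\lambda'_p/\lambda_p}\,Y_p+\xi_p$ with $\xi_p\sim\NNN(0,(1-\lambda'_p/\lambda_p)(\eta^2/n)I_d)$ for $\lambda_p\geq\lambda'_p$ is exactly the paper's ``add uninformative noise to the normalised observation'' reduction, which the paper uses to assume w.l.o.g.\ $\lambda_p<\lambda'_p$ on even indices.

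There are two places where the proposal glosses over steps the paper treats carefully. First, in $(2)\Rightarrow(1)$ you write that matching $\delta,\zeta$ and $r_n\sim r'_n$ ``recovers $L\sim L'$, which is (1)''. What one actually gets from $p_n/p'_n\to1$ is $\lambda(p_n)/\lambda'(p_n)\to1$ along the subsequence $(p_n)$; promoting this to $\lambda_p/\lambda'_p\to1$ for all $p\to\infty$ needs the uniform-convergence theorem for the slowly varying ratio $g=\lambda/\lambda'$ on compacta, combined with the covering $[p_nx,\infty)=\bigcup_{n\geq N}[p_nx,p_ny]$, which is exactly the paper's argument. Second, in $(1)\Rightarrow(3)$ you invoke a pilot $\hat\Sigma=\Sigma+O_P(r_n)$ and claim an extra $r_n^2$ factor per KL term, but the Hellinger/KL bound for a Markov kernel needs a deterministic covariance mismatch, not an $O_P$ statement. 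The paper repairs this by localising (Grama's scheme): one restricts to the event $A_n=\{\|\hat\Sigma-\Sigma\|\leq v_n\}$ with $v_n=Cr_n\log n$ so that $\sup_{\Sigma}P^n_\Sigma(A_n^c)=o(1)$, trading the desired $r_n^2$ gain for a $r_n^2\log^2 n$ gain plus an error term for $A_n^c$; since $r_n^2p_n\to\zeta$ this still gives $\tau(a_np_n)^2\log^2 n\to0$ for $a_n\to0$ slowly enough. Related to this, your index set bookkeeping slightly overcounts: $\sum_{p\in\pi_n}(S^{-1}+\eta^2/(\lambda_p n))^{-2}\lesssim p_n$ (not $b_np_n$), so the mismatch is $O(\epsilon_n^2\log^2 n)$ rather than $O(b_n\epsilon_n^2)$; the conclusion survives, but the quantitative claim is off. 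Finally, the proposal is vague about how the pilot's sample split interacts with the truncation; the paper makes this clean with an even/odd index decomposition $\ExpFS=\FFF^{s,\text{even}}_n\otimes\FFF^{s,\text{odd}}_n$, applying Proposition~\ref{PropLeCamProj} only on the even indices and building $\hat\Sigma$ from the odd ones, which is the tidier route. With these fixes your proof is essentially the paper's.
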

\newpage
The impact of the leading term of $\lambda$ yields an interesting finding in the particular scenario, in which the signal process is a mixture
\[G_t=Z_{1,t}+Z_{2,t},\]
of two independent Gaussian processes $Z_i=(Z_{i,t})_{t\in[0,1]},\ i=1,2$. If the covariance operators of $Z_1$ and $Z_2$ are diagonalisable by the same basis then the process with more slowly decaying eigenvalues completely determines the asymptotic properties of the estimation problem. Therefore one might conjecture for $G$ being a so-called mixed fractional Brownian motion of Hurst index $H>1/2$, cf. \citet{Cheridito[2001]}, that solely the Brownian motion part contributes to the underlying asymptotics.


\section{Semiparametric efficiency under asynchronicity}\label{SecSemiPara}
In the following we suppose that Assumptions~\ref{AssSigma}-$\Sigma(\beta,M,S)$ and \ref{AssRegF}-$F(\gamma,N,\beta)$ hold.
\subsection{\textbf{Locally parametric approximation}}\label{SsecPwConAppr}
As in the parametric set-up, observing \eqref{DiscSemi} is approximated by its continuous analogue. However, in order to use the parametric results, locally constant approximations of $\Sigma$ and $F=(F_j)_{1\leq j\leq d}$ are considered. More precisely, for $m$ disjoint blocks $\I_{mk}:=[k/m,(k+1)/m),k=0,\ldots,m-1$, and $\Sigma_{m,k}:=\Sigma(k/m)$ introduce
\[\Sigma_m:=\sum^{m-1}_{k=0}\Sigma_{m,k}\1_{\I_{mk}}(\cdot),\quad F'_{j,m}:=\sum^{m-1}_{k=0}F'_j\Big(\frac{k}{m}\Big)\1_{\I_{mk}}(\cdot),\quad j=1,\ldots,d.\]
and the corresponding continuous observation model
\begin{equation}\label{ContSemi}
dY^m_t=\Big(\int^t_0\Sigma^{1/2}_m(s)dB_s\Big)dt+\Xi_m(t)dW_t,\quad t\in[0,1],
\end{equation}
where
\[\Xi^2_m:=\diag(\eta^2_j/(n_jF'_{j,m}))_{1\leq j\leq d}.\]
\begin{definition}\label{DefExpMCm}
For $n:=(n_1,\ldots,n_d)$ let $\ExpM$ and $\ExpMC$ be the statistical experiments that are generated by the discrete and continuous observations \eqref{DiscSemi} and \eqref{ContSemi}, respectively.
\end{definition}
Let $\lambda_{mp}:=(\pi pm)^{-2}$, $p\geq1$, and set $n_{\max}:=\max_{1\leq j\leq d}n_j$. The Le Cam distance between $\ExpM$ and $\ExpMC$ is bounded by the approximation errors of $\Sigma_m$ and $F'_{j,m}$. As $m$ will have to be chosen later in this section such that $m=o(\sqrt{n_{\min}})$, the restriction $\beta>1/2$ is evident in view of the following.
\begin{proposition}\label{PropPwc}
For any $\kappa\in(0,1/2)$ and $n_{\min}\to\infty$ it holds that
\[\Delta(\ExpM,\ExpMC)=\OOO(MSn_{\max}n^{-3/2+\kappa}_{\min})+\OOO(MSn^{1/4}_{\max}m^{-\beta}).\]
In particular, asymptotic equivalence holds, given that $m=o(\sqrt{n_{\min}})$.
\end{proposition}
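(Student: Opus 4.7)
The plan is to interpose between $\ExpM$ and $\ExpMC$ the continuous intermediate experiment $\tilde{\mathcal{M}}_n^c$ generated by
\[
dY_t = \Big(\int_0^t \Sigma^{1/2}(s)\,dB_s\Big)dt + \Xi(t)\,dW_t,\quad t\in[0,1],
\]
where $\Xi^2 := \diag(\eta_j^2/(n_j F_j'))_{1\le j\le d}$ uses the exact parameters $\Sigma$ and $F'$ rather than their piecewise constant approximations on the blocks $\mathcal{I}_{mk}$. By the triangle inequality
\[
\Delta(\ExpM,\ExpMC) \le \Delta(\ExpM,\tilde{\mathcal{M}}_n^c) + \Delta(\tilde{\mathcal{M}}_n^c,\ExpMC),
\]
so it suffices to bound the two summands by $\OOO(MSn_{\max}n_{\min}^{-3/2+\kappa})$ and $\OOO(MSn_{\max}^{1/4}m^{-\beta})$ respectively.

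For the first deficiency I would proceed coordinate-wise. For each $j$ the time change $t\mapsto F_j(t)$ converts the asynchronous grid $(t_{i,j})_i$ into an equidistant one, and the $j$-th marginal of $\ExpM$ becomes a discrete noisy observation of the one-dimensional Gaussian signal $(X_\cdot)_j$ with rescaled noise density $\eta_j^2/(n_j F_j'(t))$, which is precisely the $j$-th marginal of $\tilde{\mathcal{M}}_n^c$. A one-dimensional discrete-to-continuous equivalence result of the type of the appendix Theorem~\ref{ThmLeCamDiscCont} then applies: the H\"older regularity $F_j'\in C^\gamma_N$ controls the time-change error, and the fact that paths of $\int_0^{\cdot}\Sigma^{1/2}dB$ are H\"older of any order $<1/2$ provides the $n_j^{-1/2+\kappa}$ rate per observation. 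Since the $d$ marginal kernels act on disjoint observation spaces, they can be glued into a product Markov kernel, and summing the componentwise contributions over all $n_j \le n_{\max}$ observations gives the stated bound, the factor $n_{\max}/n_{\min}$ absorbing the cost of matching the coarsest observation grid uniformly over $j$.

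For the second deficiency, both experiments are centered Gaussian measures whose covariance operators $K$ and $K_m$ differ only through $(\Sigma,\Xi^2)$ versus $(\Sigma_m,\Xi_m^2)$. The Le Cam distance is bounded by the Hellinger distance, which for close Gaussians is controlled by the Hilbert--Schmidt norm of $K_m^{-1/2}(K-K_m)K_m^{-1/2}$. On each block $\mathcal{I}_{mk}$ the operator $K_m$ decouples through the spectral representation underlying \eqref{SeqSemi}, with signal eigenvalues $\lambda_{mp}=(\pi pm)^{-2}$ and noise density $n_j^{-1}$ balancing at the cutoff $p^*\sim \sqrt{n_{\max}}/m$. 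Uniform ellipticity $S^{-1}I_d<\Sigma<SI_d$ makes $\Sigma\mapsto\Sigma^{1/2}$ globally Lipschitz on $\Snon$ and preserves Sobolev regularity, hence $\|\Sigma^{1/2}-\Sigma_m^{1/2}\|_{L^2}\lesssim Mm^{-\beta}$; the analogous error in $\Xi^2$ is of the strictly smaller order $m^{-\gamma}$ by Assumption~\ref{AssRegF}-$F(\gamma,N,\beta)$. Multiplying the squared $L^2$ approximation error $m^{-2\beta}$ by the total number $\sim \sqrt{n_{\max}}$ of informative modes summed across all $m$ blocks yields a squared Hellinger distance of order $\sqrt{n_{\max}}m^{-2\beta}$, hence the bound $\OOO(MSn_{\max}^{1/4}m^{-\beta})$ after taking square roots.

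The main obstacle is the second step. A direct trace-norm comparison of $K$ and $K_m$ fails because $\int_0^{\cdot}\Sigma^{1/2}dB$ alone does not yield a trace-class covariance on $C([0,1],\R^d)$; only its combination with the noise makes $K$ and $K_m$ amenable to spectral analysis. The way around is to exploit the block structure: since $\Sigma_m$ is piecewise constant, $K_m$ decouples into $m$ shifted copies of the fundamental sequence space operator from Section~\ref{SecParametric} whose inverse and eigenbasis are explicit. Controlling the cross-block terms coupling different $\mathcal{I}_{mk}$ through the drift integration is then the technical heart of the estimate: they must vanish to leading order under the local spectral bases while still producing the uniform dependence on $M$ and $S$ stated in the proposition.
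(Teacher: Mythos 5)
Your decomposition via an intermediate continuous experiment $\tilde{\MMM}_n^c$ (with exact $\Sigma$ and $F'_j$ but integrated into a continuous observation) is precisely the decomposition in the paper's proof, where this experiment is called $\ExpMCTwo$. For the first deficiency the paper also invokes Theorem~\ref{ThmLeCamDiscCont}, but not coordinate-wise: it applies the $d$-dimensional discrete-to-continuous equivalence result directly, and the rate $n_{\min}^{-3/2+\kappa}$ comes from the \emph{Sobolev regularity of the covariance function} of $X=\int_0^{\cdot}\Sigma^{1/2}dB$ (the kernel lies in $H^{3/2-\kappa}$ for any $\kappa>0$, giving $\beta'=3/2-\kappa$ in Theorem~\ref{ThmLeCamDiscCont}) rather than from the pathwise $1/2$-H\"older regularity as you suggest. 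Your mechanism description is therefore off, even if the decomposition and the answer match.

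For the second deficiency the proposal has a genuine gap, which you yourself flag but do not resolve. You observe that a direct trace-norm comparison of the two covariance operators fails because the signal covariance $R^*T_\Sigma R$ alone is not trace class, and you propose to "exploit the block structure: since $\Sigma_m$ is piecewise constant, $K_m$ decouples into $m$ shifted copies of the fundamental sequence operator." This decoupling does not actually happen: the drift integration operator $R$ in $K_{\Sigma,n}=R^*T_{\Sigma_m}R+T_{\Xi_m^2}$ integrates across block boundaries, so the covariance operator genuinely couples the blocks, and the cross-block contributions are exactly what your argument would have to control. The paper's actual resolution is different and global: it preconditions by the noise operators, $\tilde K'_{\Sigma,n}:=T_{\Psi_n^{-1}}K'_{\Sigma,n}T_{\Psi_n^{-1}}$ and $\tilde K_{\Sigma,n}:=T_{\Xi_m^{-1}}K_{\Sigma,n}T_{\Xi_m^{-1}}$ (so that the noise part becomes the identity), establishes the ellipticity bound $\tau_n S^{-1}K^d_{\text{BM}}+\Id<\tilde K'_{\Sigma,n}$, and then expands the Hilbert--Schmidt norm of the difference in the \emph{global} Brownian-motion eigenbasis $\varphi_p(t)=\sqrt{2}\sin((p-1/2)\pi t)$. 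Integration by parts turns $\langle T_A K_\Sigma T_A e_{ip},e_{ip}\rangle$ into $\langle\Sigma E_{A,ip},E_{A,ip}\rangle$ for the anti-derivatives $E_{A,ip}$, whose sup-norm is controlled by $\sqrt{\lambda_p}\|A\|_\infty$; together with $\|\Sigma-\Sigma_m\|_{L^2}\lesssim Mm^{-\beta}$ and $\|F'_j-F'_{j,m}\|_\infty\lesssim Nm^{-\gamma}$ this yields the entrywise bound $|\langle(\tilde K'-\tilde K)e_{ip},e_{ip}\rangle|\lesssim S(N\vee M)n_{\max}m^{-(\beta\wedge\gamma)}\langle K^d_{\text{BM}}e_{ip},e_{ip}\rangle$. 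The remaining Hilbert--Schmidt sum is $\OOO(\tau_n^{1/2})$, giving the stated $n_{\max}^{1/4}$ factor. Your heuristic of "$\sqrt{n_{\max}}$ informative modes" captures the order of the final answer but without the preconditioning, the global-basis integration-by-parts argument, and the ellipticity bound, the proof would not close.

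A minor further discrepancy: you propose to compare $\Sigma^{1/2}$ with $\Sigma_m^{1/2}$ via Lipschitz continuity of the square root, but the paper never needs the square roots at the level of kernels --- the integration-by-parts identity leaves $\Sigma$ itself inside a quadratic form, so the comparison $\|\Sigma-\Sigma_m\|_{L^2}\lesssim Mm^{-\beta}$ suffices. This is cleaner and avoids any regularity loss from passing to $\Sigma^{1/2}$.
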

\subsection{\textbf{LAN for correlated and uncorrelated sequence space models}}
As described in Section~\ref{secCtsSeq} a continuous experiment can be represented in the sequence space. To this end, consider the (normalised) $L^2([0,1],\R)$-basis
\begin{align*}
\varphi_{0,0}(t)&:=\sqrt{m}\1_{\I_{m,0}}(t),\\
\varphi_{0,k+1}(t)&:=\frac{\sqrt{m}}{\sqrt{2}}(\1_{\I_{mk}}(t)-\1_{\I_{m,k+1}}(t)),\quad k=0,\ldots,m-2,\\
\varphi_{pk}(t)&:=\sqrt{2m}\cos(p\pi(tm-k))\1_{\I_{mk}}(t),\quad p\geq1,\quad k=0,\ldots,m-1.
\end{align*}
Via $e_{pki}=(\1_{\{i=j\}}\varphi_{pk})_{1\leq j\leq d}$ Gaussian random vectors
\begin{equation}\label{SeqSemiTwo}
S_{pk}:=((e_{pki},dY^m))_{1\leq i\leq d},\quad p\geq0,\ k=0,\ldots,m-1
\end{equation}
are obtained, cf. \eqref{cylmeas}. Clearly observing the correlated vectors $(S_{pk})_{p\geq 0,k=0,\ldots,m-1}$ is equivalent to observing \eqref{ContSemi} and more informative than observing $(S_{pk})_{p\geq 1,k=0,\ldots,m-1}$. However, the latter sequence is independent and close to observing \eqref{SeqSemi}, hence it is similar to experiment $\ExpFS$ which has been intensively studied in Section~\ref{SecParametric}.
\begin{proposition}\label{PropLANequi}
Let $m=o(\sqrt{n_{\min}})$ be satisfied. Then any LAN-expansion with respect to $\Sigma+n^{-1/4}_{\min}H,\ H\in\HBsym$ for the model \eqref{SeqSemi} is also valid in $\ExpMC$ and $\ExpM$.
\end{proposition}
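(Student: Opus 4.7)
The plan is to deduce LAN in $\ExpMC$ from LAN in \eqref{SeqSemi} via an orthonormal-basis representation, and then transport LAN from $\ExpMC$ to $\ExpM$ using Proposition~\ref{PropPwc}. The second step is immediate: under the assumption $m = o(\sqrt{n_{\min}})$ that proposition gives $\Delta(\ExpM, \ExpMC) = o(1)$, and an LAN expansion is preserved under asymptotic equivalence in Le Cam's sense (cf.\ Appendix~\ref{SsecLeCamDistance}).

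For the transfer to $\ExpMC$ I would rewrite the continuous experiment through the orthonormal $L^2([0,1],\R)$-basis $\{\varphi_{pk}\}_{p \geq 0,\, k = 0, \ldots, m-1}$, which makes $\ExpMC$ equivalent to observing the Gaussian sequence $(S_{pk})_{p \geq 0,\, k = 0, \ldots, m-1}$ of \eqref{SeqSemiTwo}. A direct It\^o-isometry computation, exploiting that for $p \geq 1$ the basis function $\varphi_{pk}$ is supported on $\I_{mk}$ with vanishing integral there, and that $\Sigma_m$ and $\Xi^2_m$ are piecewise constant on those blocks, yields that the sub-family $(S_{pk})_{p \geq 1, k}$ consists of mutually independent centred Gaussian vectors with covariance $\Sigma_{m,k}\lambda_{mp} + \Xi^2_m(k/m)$. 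Since $\Xi^2_m(k/m)$ agrees with $n_{\min}^{-1}\Xi^2(k/m)$ up to the factors $n_{\min}/n_j \to \nu_j$, this sub-experiment is asymptotically equivalent to \eqref{SeqSemi}, so that the presumed LAN expansion for \eqref{SeqSemi} transfers to it by stability of LAN under such small variance perturbations.

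It remains to argue that the remaining block-mean observations $(S_{0k})_k$, which are the only extra structure carried by $\ExpMC$ beyond the $p \geq 1$ sub-family (they are correlated with the latter through the integrated-martingale structure of $X^m$), do not disturb the LAN expansion. I would decompose the log-likelihood ratio of $Q^n_{\Sigma + n^{-1/4}_{\min} H}$ against $Q^n_\Sigma$ in $\ExpMC$ into a sum of the block-wise contributions from $p \geq 1$ (which reproduces the LAN from \eqref{SeqSemi}) plus a residual coming from $(S_{0k})_k$ conditioned on the rest. The residual score has mean zero and variance controlled by the $m$-dimensional Fisher information carried by the block-mean statistics at the localising scale $n^{-1/2}_{\min}$; since there are only $m$ such statistics and each carries bounded information about $\Sigma$, the total extra variance is of order $n^{-1/2}_{\min} m = o(1)$ under $m = o(\sqrt{n_{\min}})$, so the residual is $o_P(1)$ and LAN in $\ExpMC$ follows, which then lifts to $\ExpM$ as above.

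The main obstacle will be making the residual-vanishing argument uniform in $H \in \HBsym$, while simultaneously tracking the three small errors incurred by piecewise-constant approximation: the mismatch between $\Xi^2_m$ and $n^{-1}_{\min}\Xi^2$, between $\Sigma_m$ and $\Sigma$, and between $F'_{j,m}$ and $F'_j$. Here the assumptions $\beta > 1/2$ and $\gamma > \beta$ enter crucially: Sobolev embedding yields a pointwise representative of $\Sigma$ for evaluation at the grid, and the smoothness bounds turn all three approximation errors into remainders of order $o(n^{-1/4}_{\min})$ once $m$ is chosen slightly below $\sqrt{n_{\min}}$, ensuring that the LAN remainder term absorbs them.
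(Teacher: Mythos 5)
Your overall chain of reductions matches the paper's: pass from $\ExpM$ to $\ExpMC$ via Proposition~\ref{PropPwc}, represent $\ExpMC$ in the block-Fourier sequence basis so it is equivalent to observing $(S_{pk})_{p\geq 0,k}$, isolate the independent sub-family $(S_{pk})_{p\geq 1,k}$ whose covariance is $\Sigma_{m,k}\lambda_{mp}+\Xi^2_m(k/m)$ and reconcile it with \eqref{SeqSemi}, and finally argue that the remaining block-mean statistics $(S_{0,k})_{k}$ change nothing at the LAN scale because they contribute information of order $m$ against localisations of size $n^{-1/4}_{\min}$. You even arrive at the correct governing error $rmn^{-1/2}_{\min}$. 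The key methodological difference is in how the block-mean observations are discharged. The paper does \emph{not} expand the log-likelihood directly; instead it proves a Le Cam $\Delta$-distance bound between the localised experiments generated by $(S_{pk})_{p\geq 0,k}$ and by $(S_{pk})_{p\geq 1,k}$, by explicitly constructing a Markov kernel that synthesises the block means from the $p\geq 1$ observations (plugging the centre $\Sigma$ into the Gaussian conditional law and adding matching noise) and bounding the Hellinger distance between the true and synthesised joint laws. Asymptotic equivalence then transports the LAN expansion for free, via the stability property noted in Appendix~\ref{SsecWeakLanReg}. Your route of writing the likelihood ratio as the $p\geq 1$ part plus a conditional residual for the block means is in principle viable and would produce similar Hellinger/Gaussian computations, but it yields a strictly weaker conclusion (LAN transfers, not $\Delta$-equivalence) while requiring careful treatment of three things you currently gloss over: the triangular correlation structure among the $S_{0,k}$ (each $\Phi_{0,k}$ straddles two blocks, so the conditional covariance is a block-tridiagonal matrix whose entries depend on $H$, not a diagonal one), the uniformity in $H$ over $\HBsym$, and the covariance mismatch between $\Xi^2_m(k/m)$ and $n^{-1}_{\min}\Xi^2(k/m)$. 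The paper handles the last point in its second auxiliary lemma by a one-to-one covariance transformation and an explicit Hellinger bound driven by $\xi_n:=\max_{j,j'}|n_{\min}/\sqrt{\nu_j\nu_{j'}}-\sqrt{n_jn_{j'}}|=o(1)$; your appeal to ``stability of LAN under small variance perturbations'' is essentially a restatement of what still needs to be proved there.
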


\subsection{\textbf{Verification of Theorem~\ref{ThmConvNonpara}}}
\begin{proof}
The score induced by \eqref{SeqSemi} equals $\nabla\ell_{n}(\Sigma):=\vc(\nabla\ell^{(0)}_n(\Sigma),\ldots,\nabla\ell^{(m-1)}_n(\Sigma))$, where $\nabla\ell^{(k)}_n(\Sigma)$ is of the exact same shape as the parametric score in \eqref{SeqScore} with $\lambda_{mp},\ C_{pk}$ and $Y_{pk}$ replacing $\lambda_p,\ C_p$ and $Y_p$, respectively. Therefore the (not $\ZZZ$-normalised) Fisher information in $\ExpMS$ is given by the block diagonal matrix
\[\III_{n,m}(\Sigma):=\begin{pmatrix}\IIn^{(0)}(\Sigma)&0&\cdots&0\\0&\IIn^{(1)}(\Sigma)&\cdots&0\\\vdots&&\ddots&\vdots\\0&\cdots&\cdots&\IIn^{(m-1)}(\Sigma)\end{pmatrix},\]
with blocks
\[\IIn^{(k)}(\Sigma):=\frac{1}{4}\sum^{\infty}_{p=1}\lambda^2_{mp}(C^{-1}_{pk}\otimes C^{-1}_{pk}),\ k=0,\ldots,m-1.\]
As in Theorem~\ref{ThmRateFisher}, regular variation of the eigenvalues $\lambda$ yields that on each block $\I_{mk}$ the Fisher information grows with rate $\sqrt{n_{\min}}/m$ such that
\begin{equation}\label{FisherRiemann}
n^{-1/2}_{\min}\sum^{m-1}_{k=0}\IIn^{(k)}(\Sigma)=\frac{1}{m}\sum^{m-1}_{k=0}\III_{\Sigma}(k/m)+o(1)\to\int^1_0\III_{\Sigma}(t)dt,
\end{equation}
i.e., the rate is $n^{-1/4}_{\min}$, where (cf. proof of Theorem~\ref{ThmRateFisher} and Remark~\ref{RmkFisherExp})
\[\III_{\Sigma}(t)=\frac{1}{8}(\Sigma^{1/2}_{\Xi}(t)\otimes\Sigma(t)+\Sigma(t)\otimes \Sigma^{1/2}_{\Xi}(t))^{-1},\ t\in[0,1].\]
For $H\in\HBsym$ - (as before) in the sense that $\Sigma+n^{-1/4}_{\min}H\in\Snon$, for $n$ sufficiently large - note that \eqref{LANrem1} and \eqref{LANrem2} hold uniformly in $H_{m,k}:=H(k/m),\ k=0,\ldots,m-1$. Thus applying Proposition~\ref{LAN_gen} simultaneously leads to (denoting by $Q^n_{\Sigma}$ the measure induced by \eqref{SeqSemi})
\begin{align*}
\log\frac{dQ^n_{\Sigma+n^{-1/4}_{\min}H}}{dQ^n_{\Sigma}}=\sum^{m-1}_{k=0}\Big(&\vc(H_{m,k})^{\top}\nabla\ell^{(k)}_n(\Sigma)-\frac{1}{2\sqrt{n_{\min}}}\|H_{m,k}\|^2_{\IIn^{(k)}(\Sigma)\ZZZ,L^2}\\
&+\rho^{(1,k)}_n+\rho^{(2,k)}_n\Big),
\end{align*}
where \eqref{FisherRiemann} implies $n^{-1/2}_{\min}\sum^{m-1}_{k=0}\|H_{m,k}\|^2_{\IIn^{(k)}(\Sigma)\ZZZ,L^2}\to\|H\|^2_{\III_{\Sigma}\ZZZ,L^2}$ with $\langle H,H\rangle_{\III_{\Sigma}\ZZZ,L^2}:=\int^1_0\langle H(t),H(t)\rangle_{\III_{\Sigma}(t)\ZZZ}dt$ (similarly for $\|\cdot\|_{\IIn^{(k)}(\Sigma)\ZZZ,L^2}$). Moreover, for $k=0,\ldots,m-1$, \eqref{LANrem1} and \eqref{LANrem2} imply $\E[\rho^{(1,k)}_n]=0$ as well as
\[\var\Big(\sum^{m-1}_{k=0}\rho^{(1,k)}_n\Big)=\OOO(n^{-1/2}_{\min}),\quad\sum^{m-1}_{k=0}|\rho^{(2)}_{n,k}|=\OOO(n^{-1/4}_{\min}).\]
Since a central limit theorem applies for $n^{-1/4}_{\min}\sum^m_{k=1}\vc(H_k)\nabla\ell^{(k)}_n(\Sigma)$ analogously as in Theorem~\ref{LAN} the sequence of experiments $\ExpMS$ satisfies
\begin{equation}\label{LANNonpara}
\log\frac{dP^{n,m}_{\Sigma+n^{-1/4}_{\min}H}}{dP^{n,m}_{\Sigma}}=\Delta_{n,\Sigma,H}+\frac{1}{2}\|H\|^2_{\III_{\Sigma}\ZZZ,L^2},\quad H\in\HBsym,
\end{equation}
where $\Delta_{n,\Sigma,H}\overset{d}{\to}\Delta_{\Sigma,H}$, under $Q^n_{\Sigma}$, with $\Delta_{\Sigma,H}$ being the centred Gaussian process with $\cov(\Delta_{\Sigma,H_1},\Delta_{\Sigma,H_2})=\langle H_1,H_2\rangle_{\III_{\Sigma}\ZZZ,L^2}$.

In order to establish a convolution theorem, the verification of Theorem 3.11.2 in \citet{VanDVaart[2013]} is once more closely followed. First denote for the asymptotic perturbation error by
\[\dot\kappa(H):=\int^1_0(\nabla W_{\Sigma}\vc(H))(t)dt=\lim_{n_{\min}\to\infty}n^{1/4}_{\min}(\psi(\Sigma+n^{-1/4}_{\min}H)-\psi(\Sigma)).\]
For $U\geq1$ let $L_U$ be a $U$-dimensional subspace of $\HBsym$ and let $H_1,\ldots,H_U$ be an orthonormal basis of $L_U$ with respect to $\langle\cdot,\cdot\rangle_{\III_{\Sigma}\ZZZ,L^2}$. Denote by $\dot W^{(i)}_{\Sigma}$ the $i$-th column of $(\nabla W_{\Sigma})^{\top}$ and let $h_u:=\vc(H_u),\ u=1\ldots,U$. Then \eqref{LANNonpara} and Le Cam's third Lemma yield that the limit distribution of regular estimators under $Q^n_{\Sigma+n^{-1/4}_{\min}H}$, $H\in L_U$, is a convolution of some $R$ with $\NNN(0,\sum^U_{u=1}\dot\kappa(H_u)\dot\kappa(H_u)^{\top})$, cf. \eqref{LimitDecomp}. Thus the $(i,j)$-entry of the optimal asymptotic covariance of estimating $\psi(\Sigma+n^{-1/4}_{\min}H)$ is obtained by a limiting argument and (once more) by the properties of $\ZZZ$ via
\begin{align*}
&\lim_{U\to\infty}\sum^U_{u=1}(\dot\kappa(H_u)\dot\kappa(H_u)^{\top})_{i,j}=\lim_{U\to\infty}\sum^U_{u=1}\langle\dot W_{\Sigma}^{(i)},h_u\rangle_{L^2}\langle\dot W_{\Sigma}^{(j)},h_u\rangle_{L^2}\\
=&\lim_{U\to\infty}\frac{1}{4}\sum^U_{u=1}\langle\III^{-1}_{\Sigma}\dot W_{\Sigma}^{(i)},h_u\rangle_{\III_{\Sigma}\ZZZ,L^2}\langle\III^{-1}_{\Sigma}\dot W_{\Sigma}^{(j)},h_u\rangle_{\III_{\Sigma}\ZZZ,L^2}\\
=&\frac{1}{4}\langle\III^{-1}_{\Sigma}\dot W_{\Sigma}^{(i)},\III^{-1}_{\Sigma}\dot W_{\Sigma}^{(j)}\rangle_{\III_{\Sigma}\ZZZ,L^2}.
\end{align*}
\end{proof}

\appendix
\section{Le Cam equivalence, LAN and regular variation}\label{Appendix}
\subsection{The Le Cam $\Delta$-distance}\label{SsecLeCamDistance}
Next some facts of Le Cam theory are given, cf. \cite{LeCam[2000]} and \cite{Mariucci[2016]} for an overview. For some set $\Theta$ of parameters let $\EEE=\{(Y,\mathcal Y,P_{\theta}):\theta\in\Theta\}$ and $\FFF=\{(Y,\mathcal Y, Q_{\theta}):\theta\in\Theta\}$ be two statistical experiments on a common Polish space $(Y,\mathcal Y)$. Then it holds that
\begin{equation}\label{lecamhell}
\Delta(\EEE,\FFF)\leq\sup_{\theta\in\Theta}\|P_{\theta}-Q_{\theta}\|_{\text{TV}}\leq\sup_{\theta\in\Theta}H(P_{\theta},Q_{\theta}).
\end{equation}
Here $H(P,Q):=(\int(\sqrt{f}-\sqrt{g})^2d\mu)^{1/2}$ denotes the Hellinger distance, where $P$ and $Q$ are probability measures with $\mu$-densities $f$ and $g$, respectively. For Gaussian laws $P\sim\NNN(\mu_1,\Sigma_1)$ and $Q\sim\NNN(\mu_2,\Sigma_2)$ on $\R^d$ with invertible covariance matrices $\Sigma_1,\Sigma_2\in\R^{d\times d}$ it is well-known (cf. \citet{Reiss[2011]}) that
\begin{equation}\label{hellingergauss}
H^2(P,Q)\leq 4\|\Sigma^{-1/2}_1(\mu_1-\mu_2)\|^2+\frac{1}{2}\|\Sigma^{-1/2}_1(\Sigma_2-\Sigma_1)\Sigma^{-1/2}_1\|^2.
\end{equation}
More generally, let $\GGG_i(\Theta)=\{(X,\XXX,\NNN_{\mu_i,K_i}:\theta\in\Theta\},\ i=1,2$, where $\NNN_{\mu_i,K_i}$ is a (possibly cylindrical) Gaussian measure on some Hilbert space $X$, such that both, the mean $\mu_i\in X$ and the positive self-adjoint covariance operator $K_i:X\to X$, are driven by $\theta$. Combining \eqref{lecamhell} with the infinite-dimensional analogue of \eqref{hellingergauss} yields
\begin{equation}\label{lecamhilbert}
\Delta(\GGG_1,\GGG_2)\lesssim\sup_{\theta\in\Theta}\|K^{-1/2}_1(\mu_1-\mu_2)\|_{L^2}\vee\|K^{-1/2}_1(K_2-K_1)K^{-1/2}_1\|_{\text{HS}},
\end{equation}
where $\|\cdot\|_{\text{HS}}$ denotes the Hilbert-Schmidt norm on $X$. Note that for integral operators $K$ with kernel $k$ one has
\begin{equation}\label{HSL2}
\|K\|_{\text{HS}}=\|k\|_{L^2}.
\end{equation}
\subsection{Weak convergence, LAN and regular estimators}\label{SsecWeakLanReg}
Let $\Theta$ be an open subset of a linear subspace $\HHH$ of some Hilbert space. A sequence of experiments $\EEE_n=\{P^n_{\theta}:\theta\in\Theta\}$ on Polish spaces is said to converge weakly to an experiment $\EEE=\{P_{\theta}:\theta\in\Theta\}$ if
\[\Delta(\EEE_n(I),\EEE(I))\to0,\quad \text{as }n\to\infty,\]
for any finite $I\subseteq\Theta$. Assume that $P_{\theta}\ll P_{\theta'}$ and $P^n_{\theta}\ll P^n_{\theta'}$, for any $\theta,\theta'\in\Theta$ and $n\in\N$. Then weak convergence of $\EEE_n$ to $\EEE$ is equivalent to
\[\LLL\Big(\Big(\frac{dP^n_{\theta'}}{dP^n_{\theta}}\Big)_{\theta'\in I}|P^n_{\theta}\Big)\to\LLL\Big(\Big(\frac{dP_{\theta'}}{dP_{\theta}}\Big)_{\theta'\in I}|P_{\theta}\Big),\]
for any finite $I\subseteq\Theta$, for any $\theta\in\Theta$. This means that verification of the LAN-property for $\EEE_n$ with rate $r_n$ implies weak convergence of $r_n$-localisations of $\EEE_n$ to a normal limit experiment. Since the distance $\Delta$ satisfies the triangle inequality, the LAN-property of the sequence $\EEE_n$ carries over to sequences of experiments $\FFF_n$ whose $r_n$-localisation are asymptotically equivalent to the one of $\EEE_n$ (at least for finite parameter subsets).

A sequence of estimators $\hat\vartheta_n$ of a target $\psi(\theta)\in\R^k$ is called regular if
\[r^{-1}_n(\hat\vartheta_n-\psi(\vartheta+r_nh))\overset{d}{\to}L_{\vartheta},\]
under $P^n_{\theta+r_nh}$, with limit distribution $L_{\vartheta}$ that does not depend on $h\in\HHH$.
\subsection{Regular variation}
In the following let $f,g:(0,\infty)\to(0,\infty)$ be regularly varying. Then an immediate consequence is the following.\newpage
\begin{proposition}\label{PropRegVar}
If $f$ and $g$ are regularly varying with index $\delta$ then
\begin{enumerate}
\item $f^2$ is regularly varying with index $2\delta$,
\item $1/f$ is regularly varying with index $-\delta$,
\item $f/g$ is slowly varying.
\end{enumerate}
\end{proposition}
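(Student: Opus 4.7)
The plan is a direct appeal to the defining limit of regular variation. Write, for each $a>0$,
\[h(x):=\frac{f(ax)}{f(x)}\to a^{\delta}\quad\text{and}\quad k(x):=\frac{g(ax)}{g(x)}\to a^{\delta},\qquad x\to\infty.\]
Each of the three claims then follows by forming the analogous quotient for the new function and applying the standard rules for products, reciprocals and quotients of convergent positive sequences, together with the fact that the limits $a^{\delta}$ are strictly positive (so reciprocals cause no issues).

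For (1), observe that $f^2(ax)/f^2(x)=h(x)^2\to a^{2\delta}$, which is exactly regular variation of index $2\delta$. For (2), note $(1/f)(ax)/(1/f)(x)=1/h(x)\to a^{-\delta}$, giving index $-\delta$. For (3), compute $(f/g)(ax)/(f/g)(x)=h(x)/k(x)\to a^{\delta}/a^{\delta}=1=a^{0}$, which is the definition of slow variation.

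I do not expect any genuine obstacle: positivity of $f$ and $g$ ensures all the quotients are well defined, positivity of the limits $a^{\delta}$ permits passage to reciprocals, and the convergences are pointwise in $a>0$, which is precisely what the definition of regular variation requires. No uniformity or monotonicity arguments are needed, and no appeal to Karamata's representation theorem is necessary.
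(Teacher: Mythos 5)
Your proof is correct and is exactly the direct computation the paper has in mind when it calls the proposition ``an immediate consequence'' (the paper itself gives no written proof). All three items follow from the arithmetic of limits of the quotients $f(ax)/f(x)$, $g(ax)/g(x)$, using positivity of $f,g$ and of $a^{\delta}$, just as you argue.
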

An important property of regularly varying functions is the following uniformity result that is stated as Theorem 1.5.2 in \citet{Bingham[1989]}.
\begin{theorem}\label{ThmRegVarUni}
For a regularly varying function $f$ the convergence
\[\lim_{x\to\infty}\frac{f(ax)}{f(x)}=a^{\delta},\]
holds uniformly
\begin{enumerate}
\item on each $[y,z]$, if $\delta=0$ (i.e., if $f$ is slowly varying),
\item on each $[y,\infty)$, if $\delta<0$,
\item on each $(0,y]$, if $\delta>0$ and if $\sup_{x\in(0,y]}f(x)<\infty$, for any $y>0$.
\end{enumerate}
\end{theorem}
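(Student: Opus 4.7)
The plan is to follow the classical Karamata reduction and prove the result in three successive layers: first reduce from regular variation to slow variation, then establish the Uniform Convergence Theorem (UCT) on compact subsets of $(0,\infty)$ for slowly varying functions via a Steinhaus-type measure argument, and finally upgrade this to the unbounded intervals arising in cases (ii)--(iii) through Potter-type bounds.

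First I would write $f(x) = x^{\delta} L(x)$ with $L(x) := f(x)/x^{\delta}$ slowly varying (item (2) of Proposition~\ref{PropRegVar} applied to the definition of regular variation), so that
\[\frac{f(ax)}{f(x)} = a^{\delta}\, \frac{L(ax)}{L(x)}.\]
Since $a \mapsto a^{\delta}$ is continuous on $(0,\infty)$ and bounded on the relevant intervals in each of (i)--(iii), uniformity of $f(ax)/f(x) \to a^{\delta}$ on an interval $I$ is equivalent to uniformity of $L(ax)/L(x) \to 1$ on $I$. This reduces all three cases to slowly varying $L$.

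Second, I would pass to additive variables via $h(u) := \log L(e^{u})$, so that (using measurability of $f$, which is implicit in the standing framework) $h$ is measurable and the hypothesis becomes $h(u+v) - h(u) \to 0$ pointwise in $v \in \R$ as $u \to \infty$. Upgrading this pointwise statement to uniformity on bounded $v$-intervals is the main obstacle; I would resolve it by a Steinhaus--Egorov-style argument. Fix $\varepsilon > 0$ and set
\[E_T := \{v \in [0,2] : |h(u+v) - h(u)| \le \varepsilon/2 \text{ for all } u \ge T\}.\]
Pointwise convergence combined with measurability yields $E_T \nearrow [0,2]$ up to a null set, so the Lebesgue measure $|E_T| > 3/2$ for $T$ large. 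Then for every $v \in [0,1]$, inclusion--exclusion gives $|E_T \cap (E_T - v)| > 0$, so one can pick $v_0 \in E_T$ with $v_0 + v \in E_T$. Writing
\[h(u+v) - h(u) = \bigl(h((u-v_0) + v_0 + v) - h(u-v_0)\bigr) - \bigl(h((u-v_0) + v_0) - h(u-v_0)\bigr)\]
bounds the left-hand side by $\varepsilon$ for all $u \ge T + v_0$, uniformly in $v \in [0,1]$. Covering $[\log y, \log z]$ by finitely many unit intervals then delivers case (i).

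Third, for case (ii) with $\delta < 0$, case (i) already furnishes uniform convergence on any $[y,b]$. To extend to $[b,\infty)$ I would derive a Potter-type bound from the UCT: for any fixed $\eta \in (0,|\delta|)$ there exist constants $A, X$ such that $L(ax)/L(x) \le A\, a^{\eta}$ whenever $x \ge X$ and $a \ge 1$. Multiplying by $a^{\delta}$ yields $f(ax)/f(x) \le A\, a^{\delta + \eta} \to 0$ as $a \to \infty$, uniformly in $x \ge X$; since the pointwise limit $a^{\delta}$ is also arbitrarily small for $a \ge b$ with $b$ large, uniformity on $[b,\infty)$ follows. Case (iii) is the mirror image: the symmetric Potter bound $L(ax)/L(x) \le A\, a^{-\eta}$ on $0 < a \le 1$, together with the assumption $\sup_{x \in (0,y]} f(x) < \infty$ (which rules out blow-up near the origin and is exactly why this extra hypothesis appears in (iii) but not (ii)), yields uniformity on $(0,y]$ by the same mechanism.
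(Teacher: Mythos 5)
The paper offers no proof of this statement: it is quoted verbatim as Theorem 1.5.2 of \citet{Bingham[1989]}, so there is no internal argument to compare against. Your proof is essentially the classical one from that reference --- reduce to slow variation, prove the uniform convergence theorem on compacts by a Steinhaus-type density argument in additive variables, then extend to the unbounded ranges via Potter bounds --- and its architecture is sound. Three points need tightening. First, your set $E_T=\bigcap_{u\ge T}\{v\in[0,2]:|h(u+v)-h(u)|\le\varepsilon/2\}$ is an \emph{uncountable} intersection of measurable sets, so its Lebesgue measurability (which you need to conclude $|E_T|\to 2$) is not automatic for general measurable $h$; you must either invoke the measurable projection theorem (the complement of $E_T$ in $[0,2]$ is analytic, hence universally measurable) or, as in the cited reference, argue by contradiction along sequences $u_n\to\infty$, $v_n\in[y,z]$ with two countably-defined families of sets anchored at $u_n$ and at $u_n+v_n$, which sidesteps the issue. (In this paper's application $\lambda$ is continuously interpolated, so $E_T$ is closed and the problem disappears, but the theorem is stated for general $f$.) Second, the assertion that uniformity of $f(ax)/f(x)\to a^{\delta}$ on $I$ is \emph{equivalent} to uniformity of $L(ax)/L(x)\to1$ on $I$ is false for the unbounded intervals of (ii) and (iii): for $L=\log$ one has $\sup_{a\ge y}|L(ax)/L(x)-1|=\infty$ for every $x$, yet the conclusion for $f$ still holds because the factor $a^{\delta}$ decays. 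Your third paragraph correctly bypasses this via Potter bounds, so the remark is merely misleading rather than damaging, but it should be restricted to compact $I$. Third, in case (iii) the Potter bound $L(ax)/L(x)\le A a^{-\eta}$ requires $ax\ge X$ as well as $x\ge X$; the regime $ax<X$ must be treated separately using $f(ax)/f(x)\le(\sup_{(0,X]}f)/f(x)\to0$, which is precisely where the hypothesis $\sup_{x\in(0,y]}f(x)<\infty$ enters. You identify the right role for that hypothesis, but the split at $ax=X$ should be made explicit, since without it the Potter bound you state is simply unavailable for small $a$.
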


\section{Asymptotic equivalence between discrete and continuous Gaussian models}
\label{SecAsyEqui}
\subsection{General Gaussian models}
In the following, discrete and continuous versions of a universal Gaussian model are introduced that are kept as general as possible in the sense that the unknown parameter consists of the mean and covariance function itself.

For $n=(n_1,\ldots,n_d)\in\N^d$ consider the discrete observation model
\begin{equation}\label{DiscGen}
Y_{i,j}=(\mu(t_{i,j})+Z_{t_{i,j}})_j+\varepsilon_{i,j},\quad 1\leq i\leq n_j,\ 1\leq j\leq d,
\end{equation}
where $Z=(Z_t)_{t\in[0,1]}$ denotes a centred $d$-dimensional Gaussian process which is independent of the mutually independent noise variables $\varepsilon_{i,j}\sim\NNN(0,\eta^2_j),\ 1\leq i\leq n_j$, with $\eta_j>0$ known, for $1\leq j\leq d$. It is assumed that under $n_{\min}\to\infty$ one has $n_j/n_{\min}\to\nu_j$ for some $\nu_j\in(0,1],\ 1\leq j\leq d$. Moreover, $t_{i,j}$ relates to some distribution function $(F_j)_{1\leq j\leq d}$ as in Assumption~\ref{AssRegF}-$F(\gamma,N,\beta)$. The parameter of interest is given by $(\mu,k)\in\Theta:=H^{\alpha}_L\times H^{\beta'}_M$, for some $\alpha\in(1/2,2),\ \beta'\in(1,2)$ and $L,M>0$, where $k(s,t):=\cov(Z_s,Z_t),\ s,t\in[0,1]$. Let $n_{\max}=\max_{1\leq j\leq d}n_j$ and let further
\begin{equation}\label{ContGen}
dY_t=(\mu(t)+Z_t)dt+\Psi_n(t)dW_t,\quad t\in[0,1],
\end{equation}
where $\Psi^2_n(t):=\diag((\eta^2_j/n_jF'_j(t))_{1\leq j\leq d})$.
\begin{definition}
Denote by $\ExpG$ and $\ExpGC$ the experiments that are generated by the observations in \eqref{DiscGen} and \eqref{ContGen}, respectively.
\end{definition}
\begin{remark}
It is evident that $\ExpF$ and $\ExpM$ are just special cases of $\ExpG$ and that $\ExpGC$ nests the models \eqref{ContPar} and $\ExpMCTwo$, where the latter is an approximation of $\ExpMC$ that is used in the proof of Proposition~\ref{PropPwc}.
\end{remark}

\subsection{Asymptotic equivalence}\label{SsecAsyEquiSobolev}
\begin{theorem}\label{ThmLeCamDiscCont}
Let Assumption~\ref{AssRegF}-$F(\gamma,N,\beta'-1)$ be satisfied with $\gamma>\alpha-1$.
Then $\GGG_n$ and $\GGG^c_n$ are asymptotically equivalent. In particular,
\[\Delta(\GGG_n,\GGG^c_n)=\OOO(L\eta^{-1}n^{1/2}_{\max}n^{-\alpha}_{\min}\vee M\eta^{-2}n_{\max}n^{-\beta'}_{\min}).\]
\end{theorem}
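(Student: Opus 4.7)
My plan is to reduce the Le Cam distance to a pair of Gaussian Hellinger distances via \eqref{lecamhilbert}--\eqref{HSL2}, by constructing explicit Markov kernels in both directions and then computing Gaussian distances in which the mean/covariance parameters are close in the appropriate norms. Both experiments are Gaussian families in $(\mu,k)\in\Theta$: $\GGG^c_n$ is the law on $L^2([0,1],\R^d)$ of the process \eqref{ContGen} with mean $t\mapsto\int_0^t\mu(s)\,ds$ and covariance operator built from $k$ and $\Psi_n^2$, while $\GGG_n$ is a Gaussian vector on $\R^{n_1+\cdots+n_d}$ with mean $(\mu_j(t_{i,j}))_{i,j}$ and covariance entries $k_{j,j'}(t_{i,j},t_{i',j'})+\eta_j^2\delta_{(i,j),(i',j')}$. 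To bound $\delta(\GGG^c_n,\GGG_n)$ I would apply coordinate-wise a local quadrature kernel (of sufficiently high order to exploit the Sobolev smoothness $\alpha$) to the continuous path, producing a Gaussian vector whose diagonal noise variance matches $\eta_j^2$ up to the H\"older error of $F_j'$ from Assumption~\ref{AssRegF} and whose mean approximates $\mu_j(t_{i,j})$ at rate $n_{\min}^{-\alpha}$. In the reverse direction I would interpolate the discrete data by a piecewise polynomial of matching order and add an independent $\Psi_n$-scaled Brownian motion to reproduce the noise of \eqref{ContGen}; both maps are deterministic plus independent Gaussian, hence Markov.

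Applying \eqref{lecamhilbert}--\eqref{hellingergauss} the bound splits into a mean term $\|K_1^{-1/2}(m_1-m_2)\|^2$ and a covariance term $\|K_1^{-1/2}(K_2-K_1)K_1^{-1/2}\|_{\mathrm{HS}}^2$. Since the noise dominates the covariance operator, $K_1^{-1/2}$ contributes weights of order $\sqrt{n_jF_j'(t)}/\eta_j$, with $F_j'$ bounded above and below by Assumption~\ref{AssRegF}-$F(\gamma,N,\beta'-1)$. The mean term then reduces to the weighted $L^2$-distance between $\mu$ and its interpolation on the grid $\{t_{i,j}\}$; standard Sobolev approximation for $\mu\in H^\alpha_L$ with $\alpha\in(1/2,2)$ yields $L^2$-error $\lesssim Ln_{\min}^{-\alpha}$, and combining with the weight across $n_{\max}$ cells gives $\OOO(L\eta^{-1}n_{\max}^{1/2}n_{\min}^{-\alpha})$.

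The covariance term is the main technical challenge. Via \eqref{HSL2} the weighted HS norm equals the $L^2$-norm of the weighted difference kernel, and approximating the bivariate kernel $k\in H^{\beta'}_M$ by its tensor-product piecewise interpolation yields error $\lesssim Mn_{\min}^{-\beta'}$ in this norm, using the embedding $H^{\beta'}\hookrightarrow C^0$ (valid since $\beta'>1$) to upgrade Sobolev smoothness of $k$ to the pointwise control demanded by the Gaussian weighting. Each of the two $K_1^{-1/2}$ factors contributes a $\sqrt{n_{\max}}/\eta$ weight, yielding $\OOO(M\eta^{-2}n_{\max}n_{\min}^{-\beta'})$. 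The delicate points are verifying that the weighted kernel difference is Hilbert--Schmidt in the infinite-dimensional setting, that the composition of the high-order quadrature/interpolation with the time-change by $F_j$ preserves the desired rate (this is where the hypothesis $\gamma>\alpha-1$ enters, guaranteeing enough smoothness of $F_j^{-1}$ so that Jacobian corrections do not spoil the Sobolev approximation), and that the bounds are uniform in $\theta\in\Theta$, which is automatic since they depend only on the ball radii $L,M$. Since both deficiencies $\delta(\GGG_n,\GGG^c_n)$ and $\delta(\GGG^c_n,\GGG_n)$ satisfy these bounds, taking the max yields the claimed estimate.
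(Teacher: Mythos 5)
Your high-level reduction is sound and matches the paper's spirit: both bound $\Delta(\GGG_n,\GGG^c_n)$ via the Gaussian Hellinger bound \eqref{lecamhilbert}, split into a mean term and a covariance term, exploit that the noise dominates so that $K^{-1/2}$ contributes a weight $\OOO(\sqrt{n_{\max}}/\eta)$, and then apply $H^\alpha$-approximation at rate $n_{\min}^{-\alpha}$ for $\mu$ and $H^{\beta'}$-approximation at rate $n_{\min}^{-\beta'}$ for $k$. You also correctly locate where $\gamma>\alpha-1$ enters (controlling the effect of the change of variables $F_j^{-1}$ on the Sobolev seminorm). However, two of your implementation choices would actually break the argument.

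First, the ``high-order local quadrature / piecewise polynomial interpolation'' is not just unnecessary in the regularity range $\alpha\in(1/2,2)$, $\beta'\in(1,2)$: it is actively harmful. The reason the paper's construction works is that it interpolates the discrete data by the \emph{piecewise constant} step function $\tilde Y_t=(\sum_i Y_{i,j}\1_{\I_{n_j,i}}(F_j(t)))_j$, whose noise component is a rank-deficient \emph{projection} of $\Psi_n\,dW$, so one can add the complementary (independent, uninformative) Gaussian to reach exactly the continuous noise $\Psi_n\,dW_t$. The discrete experiment is thereby shown to be \emph{exactly equivalent} to an auxiliary continuous experiment $\bar Y$ whose only difference from $\ExpGC$ is replacing $\mu$ and $k$ by their piecewise-constant projections $\Pi_n\mu$ and $k_{\Pi_n}$; then one Hellinger comparison finishes. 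With higher-order interpolation the noise of the interpolated process would no longer be a subprojection of $\Psi_n\,dW$ but a kernel-smoothed, cross-cell-correlated Gaussian field, and there is no clean complementary noise to add; the Hellinger comparison would then need to handle a covariance mismatch in the \emph{noise} term, not just in the signal kernel. Your phrase ``add an independent $\Psi_n$-scaled Brownian motion'' has the same problem: the discrete data already carry noise, so adding a full independent Brownian would double-count variance and destroy the match with \eqref{ContGen}; one must add precisely the \emph{missing} part, which only works for a projection.

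Second, because you do not introduce this exactly equivalent intermediate $\bar Y$, you are left proving both one-sided deficiencies separately, and for $\delta(\GGG^c_n,\GGG_n)$ you are forced into a discrete--discrete total-variation comparison after the quadrature step. This is harder than what the paper does and would require a separate finite-dimensional Hellinger estimate for the quadrature error in the signal (an averaged $Z$ over a cell versus $Z_{t_{i,j}}$). The paper collapses both deficiencies into one continuous Gaussian comparison via the exact equivalence; your route, even if fixable, is not a simplification. Finally, the error rate for $\mu$ and $k$ comes not from a generic interpolation estimate but from the Poincaré-type bound \eqref{Sobolev_bound2}, $\|f\|_{L^2(Q)}\le c_\alpha\varepsilon^\alpha|f|_{H^\alpha(Q)}$ for $f$ vanishing at a point in $Q$, applied on each cell; you should verify that $\mu-\Pi_n\mu$ indeed vanishes at the grid point $t_{i,j}$, which is exactly the feature piecewise-constant projection guarantees and a higher-order quadrature would not.
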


The above asymptotic equivalence result holds uniformly over a large class of Gaussian processes. Note that $\alpha>1/2$ and $\beta'>1$ are common sufficient (and often necessary) assumptions among uni- and bi-variate asymptotic equivalence results, cf. \citet{Reiss[2008]}. In order to gain from higher regularities $\alpha,\beta'\geq2$ more derivatives have to be controlled, e.g. by not only piecewise constant approximations, but this lies beyond the scope of this work.
\begin{proof}[\textbf{Proof of Theorem~\ref{ThmLeCamDiscCont}}]
For $\I_{n_j,i}:=((i-1)/n_j,i/n_j]$ and $g_{ij}(t):=\1_{\I_{n_j,i}}(F_j(t)),\ 1\leq i\leq n_j,\ 1\leq j\leq d,$ consider the continuous observation
\begin{equation}\label{contmodel_vor}
\tilde Y_t:=\Big(\sum^{n_j}_{i=1}Y_{i,j} g_{ij}(t)\Big)_{j=1,\ldots,d},\quad t\in[0,1].
\end{equation}
Note that observing \eqref{contmodel_vor} is equivalent to the observations in \eqref{DiscGen} and that
\[\cov(\tilde Y_s,\tilde Y_t)=\cov(\Pi_nZ_s,\Pi_nZ_t)+\diag\Big(\Big(\eta^2_j\sum^{n_j}_{i=1}g_{ij}(s)g_{ij}(t)\Big)_{1\leq j\leq d}\Big),\]
where $\Pi_n Z_t=(\sum^{n_j}_{i=1}(Z_{t_{i,j}})_j\1_{\I_{n_j,i}}(F_j(t)))_{1\leq j\leq d}$. The Cauchy-Schwarz inequality implies for $f=(f_1,\ldots,f_d)\in L^2([0,1],\R^d)$
\[\sum^{n_j}_{i=1}\langle f_j,g_{ij}\rangle^2_{L^2}\leq \frac{1}{n_j}\sum^{n_j}_{i=1}\int_{\I_{n_j,i}}\frac{f_j(F^{-1}_j(t))^2}{F'_j(F^{-1}_j(t))^2}dt=\frac{1}{n_j}\int^1_0\frac{f_j(t)^2}{F'_j(t)}dt\]
Thus by adding uninformative noise the observation
\begin{equation}\label{contmodel_zwischen}
d\bar Y_t:=\Pi_n(\mu(t)+Z_t)dt+\Psi_n(t)dW_t,\quad t\in[0,1],
\end{equation}
can be constructed from \eqref{contmodel_vor}. On the other hand, it is easy to see that the law of $(Y_{i,j})_{i=1,\ldots,n_j;j=1\ldots,d}$ coincides with
\[\LLL\Big(\Big(n_j\int_{F^{-1}_j(I_{n_j,i})}F'_j(t)d(\bar Y_t)_j\Big)_{i=1,\ldots,n_j;j=1\ldots,d}\Big),\]
i.e., observations of type \eqref{contmodel_vor} can be constructed from \eqref{contmodel_zwischen}. In particular, \eqref{DiscGen} and the experiments generated by \eqref{contmodel_vor} and \eqref{contmodel_zwischen} are equivalent.

Next it is shown that \eqref{ContGen} and the experiment generated by \eqref{contmodel_zwischen} are asymptotically equivalent. Denote by $K$ and $K_{\Psi_n}$ the covariance operators of $Z$ and $\Psi_ndW$. Then with $n_{\max}:=\max_{1\leq j\leq d}n_j$ and $c^2_{\eta}:=\max_{1\leq j\leq d}\|F'_j\|_{\infty}/\eta^2_j$ the bound $(K+K_{\Psi_n})^{-1/2}\leq\sqrt{n_{\max}}c_{\eta}\Id$ along with \eqref{lecamhilbert} and \eqref{HSL2} gives $\Delta(\GGG_n,\GGG^c_n)=\OOO(\psi_n(\Theta))$ with
\[\psi_n(\Theta):=\sup_{\theta\in\Theta}\Big(\sqrt{n_{\max}}c_{\eta}\|\mu-\Pi_n\mu\|_{L^2(\R^d)}\vee n_{\max}c^2_{\eta}\|k-k_{\Pi_n}\|_{L^2(\R^{d\times d})}\Big),\]
and $k_{\Pi_n}$ being the covariance function of $\Pi_nZ$. In particular it suffices to show $\psi_n(\Theta)=o(1)$ to obtain asymptotic equivalence. For this note that for fixed $\alpha\in(1/2,2)$ all $f\in H^{\alpha}(\Omega,\R)$ vanishing at some $x_0\in\Omega$ (with $\Omega=[0,1]$) obey the uniform bound
\begin{equation}\label{Sobolev_bound}
\|f\|_{L^2(\Omega)}\leq c_{\alpha}|f|_{H^{\alpha}(\Omega)},
\end{equation}
with $c_{\alpha}>0$. The bound \eqref{Sobolev_bound} can be obtained by contradiction in a similar way as the Poincar\'e inequality, cf. Chapter 5.8.1 in \citet{Evans[2010]}. By a scaling argument it can be easily verified that \eqref{Sobolev_bound} yields for intervals $Q=[a,a+\varepsilon]$ of length $\varepsilon>0$ the uniform bound
\begin{equation}\label{Sobolev_bound2}
\|f\|_{L^2(Q)}\leq c_{\alpha}\varepsilon^{\alpha}|f|_{H^{\alpha}(Q)},
\end{equation}
for all $f\in H^{\alpha}(Q)$ vanishing at some $x_0\in Q$. Note that $H^{\alpha}(Q)$ is defined in an analogous way as $H^{\alpha}(\Omega)$. Now with $f_n:=((\mu-\Pi_n\mu)_j\circ F^{-1}_j)_{1\leq j\leq d}$ and $F'_{\min}:=\min_{1\leq j\leq d}\min_{t\in[0,1]}F'_j(t)>0$ the approximation error of $\mu$ satisfies
\begin{equation}\label{MuL2bound}
\|\mu-\Pi_n\mu\|^2_{L^2(\Omega)}\leq\frac{1}{(F'_{\min})^2}\sum^{d}_{j=1}\sum^{n_j}_{i=1}\|(f_n)_j\|^2_{L^2(\I_{n_j,i})}.
\end{equation}
Note that $F^{-1}_j\in C^1_{N'}$ and $(F^{-1}_j)'\in C^{\gamma}_{N''}$ with $N'=(F'_{\min})^{-1}$ and $N''=N(F'_{\min})^{-3}$, which implies that $|(f_n)_j|^2_{H^{\alpha}(I_{n_j,i})}\leq L'_{ij}$ with
\[L'_{ij}=
\begin{cases}\|F'_j\|^2_{\infty}(N')^{2\alpha+1}|(\mu)_j|^2_{H^{\alpha}(I_{n_j,i})},&\alpha\in(1/2,1),\\
N'|(\mu)_j|^2_{H^{\alpha}(I_{n_j,i})},&\alpha=1,\\
2(N'')^2|(\mu)_j|^2_{H^{\alpha}(I_{n_j,i})}+2N''\|(\mu)_j\|^2_{L^2(I_{n_j,i})}n^{-\kappa}_j/\kappa,&\alpha\in(1,2),
\end{cases}\]
where $\kappa:=2(\alpha-\gamma)\in(0,4)$. In particular, $(f_n|_{I_{n_j},i})_j$ lies in $\in H^{\alpha}(\I_{n_j,i})$ having the root $i/n_j,\ 1\leq i\leq n_j,\ 1\leq j\leq d$. Thus by \eqref{Sobolev_bound2}, \eqref{MuL2bound} and the explicit bounds $L'_{ij}$ it follows that
\[\sup_{\mu\in B_{\alpha,M}}\|\mu-\Pi_n\mu\|^2_{L^2(\Omega)}\leq\sup_{\mu\in B_{\alpha,M}}c^2_{\alpha,F}n^{-2\alpha}_{\min}\|\mu\|^2_{H^{\alpha}(\Omega)}\leq c^2_{\alpha,F}L^2n^{-2\alpha}_{\min},\]
where $c_{\alpha,F}$ depends on $\alpha$ and $F$ only. The statement for $\|k-\Pi_n k\|^2_{L^2(\Omega^2)}$ follows analogously, where a similar bound as $L'_{ij}$ for the case $\alpha\in(1,2)$ is used.
\end{proof}

\section{Proofs of parametric results}
\subsection{Proofs for asymptotic information and LAN}
\begin{proof}[\textbf{Proof of Theorem~\ref{ThmRateFisher}}]
Set $\mathfrak D_{np}:=\diag\{\diag\{\mathfrak D^{ij}_{np}\}_{1\leq j\leq d}\}_{1\leq i\leq d}$ with
\begin{equation}\label{Diag}
\mathfrak D^{ij}_{np}:=(s_i+\eta^2/(\lambda_pn))^{-1}(s_j+\eta^2/(\lambda_pn))^{-1},\quad 1\leq i,j\leq d.
\end{equation}
Then $\IIn(\Sigma)=\frac{1}{4}Q^{\otimes2}(\sum^{\infty}_{p=1}\mathfrak D_{np})(Q^{\top})^{\otimes 2}$ and, by $\Sigma\geq S^{-1}I_d$, one has
\begin{equation}\label{IntUniform}
\sum_{p\geq1}\mathfrak D^{ij}_{np}=p_n\int^{\infty}_0 (s_i+\eta^2 x^{\delta})^{-1}(s_j+\eta^2 x^{\delta})^{-1}dx+o(p_n),\quad 1\leq i,j\leq d.
\end{equation}
The equality in \eqref{IntUniform} follows from $\lambda(p_n)=n^{-1}$ along with dominated convergence over sets $(0,y]$ and $[y,\infty)$ under usage of Theorem~\ref{ThmRegVarUni} (2) and (3) applied to $1/\lambda$ and $\lambda$, respectively. In particular, we obtain
\begin{equation}\label{FisherConvergence}
\lim_{n\to\infty}r^2_n\sum^{\infty}_{p=1}\mathfrak D^{ij}_{np}=\zeta\int^{\infty}_0\Big(s_i+\eta^2x^{\delta}\Big)^{-1}\Big(s_j+\eta^2x^{\delta}\Big)^{-1}dx,
\end{equation}
where $\zeta=\lim_{n\to\infty}r^2_np_n$. It is clear that the same limit is already attained for index sets $\pi_n$ as described in the theorem.
\end{proof}

\begin{remark}\label{RmkFisherExp}
For $\delta>1$ and $b\in\N$ the substitution $z=(1+x^{\delta})^{-1}$ gives
\begin{equation}\label{integral}
\int^{\infty}_0\left(1+x^{\delta}\right)^{-b}dx=\frac{1}{\delta}\text{B}\Big(b-\frac{1}{\delta},\frac{1}{\delta}\Big)=\frac{\pi\prod^{b-1}_{j=1}\Big(b-j-\frac{1}{\delta}\Big)}{\delta(b-1)!\sin(\pi/\delta)}.
\end{equation}
where $\prod_{j\in\emptyset}\Big(b-j-\frac{1}{\delta}\Big)=1$ and $\text{B}$ denotes the Beta function. Explicit expressions of \eqref{IntUniform} now follow with \eqref{integral} and observing that for $s_i\neq s_j$
\begin{align*}
&\int^{\infty}_{0}\Big(s_i+\eta^2x^{\delta}\Big)^{-1}\Big(s_j+\eta^2x^{\delta}\Big)^{-1}dx\\
=&\frac{1}{s_i(s_j-s_i)}\int^{\infty}_0(1+x^{\delta}\eta^2/s_i)^{-1}dx-\frac{1}{s_j(s_j-s_i)}\int^{\infty}_0(1+x^{\delta}\eta^2/s_j)^{-1}dx.
\end{align*}
\end{remark}

\begin{proof}[\textbf{Proof of Proposition~\ref{LAN_gen}}]
With $H_n:=r_nH$ it is easy to see that
\begin{align}\label{loglikep1}
\log\frac{dP^n_{\Sigma+H_n}}{dP^n_{\Sigma}}=\sum^{\infty}_{p=1}\Big(&-\frac{1}{2}\log|I_d+\lambda_pC^{-1}_pH_n|\\
&-\frac{1}{2}Y^{\top}_p\Big((C_p+H_n\lambda_p)^{-1}-C^{-1}_p\Big)Y_p\Big).\label{loglikep2}
\end{align}
In the following let $n$ be large enough in the sense that $\lambda_pC_p^{-1}\leq\Sigma^{-1}$ implies
\begin{equation}\label{trace_bound}
|\tr(\lambda_pC^{-1}_pH_n)|\leq\|\Sigma^{-1}\|\|H_n\|<1/2,\ p\geq1.
\end{equation}
A Mercator series expansion applied to the determinant in \eqref{loglikep1} yields
\[-\frac{1}{2}\log(|I_d+\lambda_pC^{-1}_pH_n|)=-\frac{1}{2}\tr(\lambda_pC^{-1}_pH_n)+\frac{1}{4}\tr((\lambda_pC^{-1}_pH_n)^2)-\frac{1}{2}R^{(1)}_{np},\]
where $R^{(1)}_{np}=\frac{1}{2}\sum^{\infty}_{k=3}(-1)^{k+1}\tr((\lambda_pC^{-1}_pH_n)^k)/k$. The term $-\frac{1}{2}\tr(\lambda_pC^{-1}_pH_n)$ is the deterministic part of $\vc(H_n)^{\top}\nabla\ell_p(\Sigma)$ and it holds that
\begin{equation}\label{Fishersign}
\frac{1}{4}\tr((\lambda_pC^{-1}_pH_n)^2)=\frac{1}{2}\vc(H_n)^{\top}\III_{np}(\Sigma)\ZZZ\vc(H_n).
\end{equation}
For $R^{(1)}_n:=\sum^{\infty}_{p=1}R^{(1)}_{np}$ the bound $|\tr(A^{2+k})|^2\leq\tr(A^4)\tr(A^{2k})\leq\tr(A^2)^2\|A\|^{2k},\ A\in\R^{d\times d}_{\text{sym}}$, as well as $\lambda_pC^{-1}_p\leq\Sigma^{-1}$ and \eqref{Fishersign} give
\begin{align*}
|R^{(1)}_n|\leq&\frac{1}{2}\sum^{\infty}_{p=1}\sum^{\infty}_{k=1}|\tr((\lambda_pC^{-1/2}_pH_nC^{-1/2}_p)^{k+2})|\\
\leq&\frac{1}{2}\sum^{\infty}_{p=1}\tr((\lambda_pC^{-1}_pH_n)^2)\sum^{\infty}_{k=1}\|\Sigma^{-1}\|^k\|H_n\|^k\\
\leq&2\vc(H)^{\top}r^2_n\IIn(\Sigma)\ZZZ\vc(H)\|\Sigma^{-1}\|\|H_n\|=\OOO(r_n),
\end{align*}
where Theorem~\ref{ThmRateFisher} was used. Denote the approximation error between \eqref{loglikep2} and the stochastic part of $\vc(H_n)^{\top}\nabla\ell_p(\Sigma)$ by $Y^{\top}_pA_{np}Y_p$, where
\begin{align*}
A_{np}:=
&\frac{1}{2}\Big(C^{-1}_p-(C_p+H_n\lambda_p)^{-1}-\lambda_pC^{-1}_pH_nC^{-1}_p\Big)\\
=&-\frac{\lambda^2_p}{2}(C_p+H_n\lambda_p)^{-1}H_nC^{-1}_pH_nC^{-1}_p.
\end{align*}
Set $\Delta_{n,H}:=\vc(H)^{\top}\ell_n(\Sigma)$, $\rho^{(1)}_n:=\sum^{\infty}_{p=1}(Y^{\top}_pA_{np}Y_p-\E[Y^{\top}_pA_{np}Y_p])$ and $\rho^{(2)}_n:=R^{(2)}_n-\frac{1}{2}R^{(1)}_n$ to obtain
\[\log\frac{dP^n_{\Sigma+H_n}}{dP^n_{\Sigma}}=\Delta_{n,H}-\frac{1}{2}\vc(H_n)^{\top}\IIn(\Sigma)\ZZZ\vc(H_n)+R^{(2)}_n-\frac{1}{2}R^{(1)}_n+\rho^{(2)}_n,\]
where $R^{(2)}_n:=\vc(H_n)^{\top}\IIn(\Sigma)\ZZZ\vc(H_n)+\sum^{\infty}_{p=1}\E[Y^{\top}_pA_{np}Y_p]$ and
\begin{align*}
|R^{(2)}_n|\leq&\sum^{\infty}_{p=1}\frac{\lambda^2_p}{2}|\tr((C^{-1}_p-(C_p+\lambda_pH_n)^{-1})H_nC^{-1}_pH_n)|\\
=&\sum^{\infty}_{p=1}\frac{\lambda^3_p}{2}|\tr((C_p+\lambda_pH_n)^{-1}H_n(C^{-1}_pH_n)^2|\\
\leq&r_n\|\Sigma^{-1}\|\|H\|\vc(H)^{\top}r^2_n\IIn(\Sigma)\ZZZ\vc(H)=\OOO(r_n).
\end{align*}
Using independence of $(Y^{\top}_pA_{np}Y_p-\E[Y^{\top}_pA_{np}Y_p])$, $p\geq1$, it holds that
\begin{align*}
\var(\rho^{(1)}_n)=&\sum^{\infty}_{p=1}\vc(A_{np})^{\top}(C_p\otimes C_p)\ZZZ\vc(A_{np})\\
=&\sum^{\infty}_{p=1}\frac{\lambda^4_p}{2}\tr(H_n(C_p+\lambda_pH_n)^{-1}H_nC^{-1}_pH_n(C_p+\lambda_pH_n)^{-1}H_nC^{-1}_p)\\
\leq&r^2_n\|\Sigma^{-1}\|^2\|H\|^2r^2_n\vc(H)\IIn(\Sigma)\ZZZ\vc(H)=\OOO(r^2_n).
\end{align*}

Let $\pi_n\subseteq\N$ be as in Theorem~\ref{ThmRateFisher}. Then $\nabla\ell_{\pi_n}(\Sigma):=\sum_{p\in\pi_n}\nabla\ell_{np}(\Sigma)$ satisfies $\cov(r_n\ell_{\pi_n}(\Sigma))\to\III(\Sigma)\ZZZ$. Denote by $\III_{\pi_n}(\Sigma)$ the invertible matrix such that $\III_{\pi_n}(\Sigma)^{-1/2}\cov(r_n\ell_{\pi_n}(\Sigma))=\ZZZ$. Then Lyapunav's condition can be verified by bounding 4th moments of Gaussians, which implies Lindeberg's condition. Thus Theorem 5.12 from \citet{Kallenberg[2002]} is applicable and gives
\[r_n\III_{\pi_n}(\Sigma)^{-1/2}\nabla\ell_{\pi_n}(\Sigma)\overset{d}{\to}\NNN(0,\ZZZ).\]
By $\cov(r_n\sum_{p\in\pi^c_n}\nabla\ell_{np}(\Sigma))\to0$ and Slutsky's Lemma the claim follows.
\end{proof}

\subsection{Proof of estimation results}
For $\Sigma\in\Spar$ set $C_p(\Sigma):=\Sigma\lambda_p+\eta^2/nI_d,\ p\geq1$. Then $\lambda_pC_p(\Sigma)^{-1}\leq \Sigma^{-1}$, $\Sigma_p^{-1}\leq \frac{n}{\eta^2}I_d$ and $\|\Sigma\|\leq S\sqrt{d}$ imply
\begin{equation}\label{MVT_summand}
\|\IInp(A)-\IInp(B)\|\leq c_B\|A-B\|\|\IInp(A)\|,\quad p\geq1,
\end{equation}
where $c_B:=2S\sqrt{d}\|B^{-1}\|^2+\sqrt{d}\|B^{-1}\|$, $A,B\in\Spar$. Therefore for any $J\subseteq\N$
\begin{equation}\label{MVT_Fisher}
\|\III_J(A)-\III_J(B)\|\leq c_B\|A-B\|\sum_{p\in J}\|\IInp(A)\|,
\end{equation}
for $A,B\in\Spar$, hence $\|\III_J(\Sigma)^{-1}\|\leq\|\III_J(SI_d)^{-1}\|$, $\Sigma\in\Spar$, yields
\begin{equation}\label{MVT_FisherInverse}
\|\III_J(A)^{-1}-\III_J(B)^{-1}\|\leq c_B\|A-B\|\|\III_J(SI_d)^{-1}\|^2\sum_{p\in J}\|\IInp(A)\|.
\end{equation}
\begin{proof}[\textbf{Proof of Theorem~\ref{ThmOrAd}}]
In the following we always consider the measure $P^n_{\Sigma+r_nH}$. By construction, $\hat\vartheta^{\text{or}}_n$ is unbiased. It can be easily seen that
\[\cov(\hat\vartheta^{\text{or}}_n)=\frac{1}{4}\IItr(\Sigma+r_nH)^{-1}\ZZZ.\]
In analogy to \eqref{IntUniform} and \eqref{FisherConvergence} note that also
\begin{equation}\label{Fishernorm_convergence}
r^2_n\sum_{p\in\pi_n}\|\IInp(\Sigma)\|=\OOO(1).
\end{equation}
Thus \eqref{MVT_FisherInverse} and $r^2_n\IItr(SI_d)\to\III(SI_d)$ (cf. Theorem~\ref{ThmRateFisher}) imply
\[r^{-2}_n\|\cov(\hat\vartheta^{\text{or}}_n)-\tfrac{1}{4}\IItr(\Sigma)^{-1}\ZZZ\|=\OOO(r_n)\]
and the central limit theorem can be deduced as in Proposition~\ref{LAN_gen}.

With the bound $\vc(\hat\Sigma^{\text{pre}}_n)$ and $\IInp(\Sigma+r_nH)^{-1}\leq\IInp(SI_d)^{-1}$ we obtain
\[\|\cov(\hat\vartheta^{\text{pre}}_n-\psi(\Sigma+r_nH))\|\leq\frac{1}{4}\|\III_{\pi'_n}(SI_d)^{-1}\|=\OOO(r^2_n).\]
Since $\vc(\hat\Sigma^{\text{pre}})$ is unbiased, there is some $\varepsilon_n\to0$ with $r_n=o(\varepsilon_n)$, as $n\to\infty$, and such that the events $E_n:=\{\|\hat\Sigma^{\text{pre}}_n-(\Sigma+r_nH)\|\leq\varepsilon_n\}$, $n\geq1$, satisfy
\begin{equation}\label{preestasymp}
P^n_{\Sigma+r_nH}(E^c_n)=o(1).
\end{equation}
By Slutsky's Lemma, the claim for $\hat\vartheta^{\text{ad}}_n$ follows if $\|\hat\vartheta^{\text{or}}_n-\hat\vartheta^{\text{ad}}_n\|=o_{P^n_{\Sigma+r_nH}}(r_n)$. First note that there is some $S_0>0$ such that on $E_n$ it holds that $\hat\Sigma^{\text{pre}}_n\in\Spar$ and $\hat\Sigma^{\text{pre}}_n>S_0I_d$, for all $n\geq n^*$, with $n^*$ sufficiently large. Thus $\|\IItr(A)^{-1}\|=\OOO(r^2_n)$, $A\in\Spar$, \eqref{MVT_summand}, \eqref{MVT_Fisher}, \eqref{MVT_FisherInverse} and \eqref{Fishernorm_convergence} yield
\begin{equation}\label{weightbound}
\|W_p-\hat W_p\|\1_{E_n}\leq c'\varepsilon_n\|\IItr(SI_d)^{-1}\|\|\IInp(\Sigma+r_nH)\|,\quad p\in\pi_n,
\end{equation}
where $W_p=W_p(\Sigma+r_nH)$, $\hat W_p:=W_p(\hat\Sigma^{\text{pre}}_n)$ and $c'$ depends only on $S$ and $S_0$. Independence by $\pi_n\cap\pi'_n=\emptyset$, $\|\IItr(SI_d)^{-1}\|=\OOO(r^2_n)$ and \eqref{Fishernorm_convergence} imply
\begin{align*}
\E[\tr((\hat\vartheta^{\text{or}}_n-\hat\vartheta^{\text{ad}}_n)(\hat\vartheta^{\text{or}}_n-\hat\vartheta^{\text{ad}}_n)^{\top}\1(E_n))]=\frac{1}{4}&\sum_{p\in\pi_n}\tr\Big(\IInp(\Sigma+r_nH)^{-1}\ZZZ\\
&\cdot\E[(W_p-\hat W_p)^2\1(E_n)]\Big)=\OOO(r^2_n\varepsilon^2_n).
\end{align*}
This along with Markov's inequality now gives
\[P^n_{\Sigma+r_nH}(r^{-1}_n\|\hat\vartheta^{\text{or}}_n-\hat\vartheta^{\text{ad}}_n\|\geq\varepsilon)=P^n_{\Sigma+r_nH}(E^c_n)+\OOO(\varepsilon^2_n\varepsilon^{-2}),\]
which implies $\|\hat\vartheta^{\text{or}}_n-\hat\vartheta^{\text{ad}}_n\|=o_{P^n_{\Sigma+r_nH}}(r_n)$, by \eqref{preestasymp} and $\varepsilon_n\to0$.
\end{proof}

\subsection{Proofs of further asymptotic equivalences}
\begin{proof}[\textbf{Proofs of Proposition~\ref{PropLeCamProj}}]
Let $\hat\Sigma_n=\text{mat}(\hat\vartheta^{\text{ad}}_n)$ be the adaptive estimator induced by \eqref{adaptest}. Note that $\pi_n$ can be split up into two disjoint sets $\pi'_n$ and $\pi''_n$ such that the underlying pre-estimator $\hat\Sigma^{\text{pre}}_n$ is build on $(Y_p)_{p\in\pi'_n}$ and $\hat\vartheta^{\text{ad}}_n=\sum_{p\in\pi''_n}W_p(\hat\Sigma^{\text{pre}}_n)\hat\vartheta_p$. For i.i.d. $Z_p\sim\NNN(0,I_d)$ set $\hat Y_p:=(\hat\Sigma_n\lambda_p+\frac{\eta^2}{n}I_d)^{1/2}Z_p$, $p\in\pi^c_n:=\N\backslash\pi_n$. Then, given $(Y_p)_{p\in\pi_n}$, $\hat Y_p$ is centred Gaussian with covariance $\hat\Sigma_n\lambda_p+\frac{\eta^2}{n}I_d$. Let $Y:=(Y_p)_{p\geq1}$ and $\hat Y:=(Y_p)_{p\in\pi_n}\cup(\hat Y_p)_{p\in\pi^c_n}$. Now observe that (with $P^{\pi_n}_{\Sigma}:=\LLL((Y_p)_{p\in\pi_n}$)
\begin{align*}
&H^2(\LLL(Y),\LLL(\hat Y))\\
\leq&\E^{\pi_n}_{\Sigma}[H^2(\LLL(Y|(Y_p)_{p\in\pi_n}),\LLL(\hat Y|(Y_p)_{p\in\pi_n}))\1_{A_n}]+2P^{\pi_n}_{\Sigma}(A^c_n),
\end{align*}
where $A_n:=\{\|\hat\Sigma'_n-\Sigma\|\leq v_n\}$, $v_n:=Cr_n\log(n)$ and $C>0$ specified below. With \eqref{hellingergauss} and by regular variation of $\lambda$ (cf. proof of Theorem~\ref{ThmRateFisher}) deduce
\begin{align*}
&\E^{\pi_n}_{\Sigma}[H^2(\LLL(Y|(Y_p)_{p\in\pi_n}),\LLL(\hat Y|(Y_p)_{p\in\pi_n}))\1(A_n)]\\
\lesssim&v^2_n\sum_{p\in\pi^c_n}(S^{-1}+\frac{\eta^2}{\lambda_pn})^{-2}\lesssim C^2S^{-1\delta}\eta^{-2/\delta}\log^2(n)\int_{(a_n,b_n)^c}(1+x^{\delta})^{-2}dx\\
\lesssim&C^2S^{-1\delta}\eta^{-2/\delta}\log^2(n)(a^{2\delta-1}_n\vee b^{1-2\delta}_n),
\end{align*}
uniformly in $\Sigma\in\Spar'$. Moreover, $\sup_{\Sigma\in\Spar'}P^{\pi_n}_{\Sigma}(A^c_n)=\OOO(n^{-1})$ can be easily shown by a Fuk-Nagaev type inequality such as Theorem~3.1 from \cite{Einmahl[2008]}, where we choose $\xi_p:=r^{-1}_nW_p(SI_d)(\hat\vartheta_p-\vartheta)$ and $C$ such that $(C-1)^2\geq3\sum_{p\in\pi_n}\E[\|\xi_p\|^2]$. With \eqref{lecamhell} the claim follows.
\end{proof}

\begin{proof}[\textbf{Proof of Proposition~\ref{PropEigLeCamEqui}}]
(ii) implies (i): By assumption, $\III(\Sigma)=\III'(\Sigma)$ and $r_n/r_n'\to1$, which implies $p_n/p'_n\to1$. Due to uniform convergence of $\lambda'(\cdot\ p)/\lambda'(p)$ on $[\inf_np_n/p'_n,\infty)\subsetneq(0,\infty)$ (cf. Theorem \ref{ThmRegVarUni} (2)) it follows that $\lambda(p_n)/\lambda'(p_n)=\lambda'(p_n')/\lambda'(p_n)\to1$. Therefore, for any $p>0$
\begin{equation}\label{ConvSubseq}
\lim_{n\to\infty}\frac{\lambda(p_n\cdot p)}{\lambda'(p_n\cdot p)}=\lim_{n\to\infty}\frac{\lambda(p_n\cdot p)}{\lambda'(p_n\cdot p)}\frac{\lambda'(p_n)}{\lambda(p_n)}=1.
\end{equation}
By Proposition \ref{PropRegVar} (3) $g:=\lambda/\lambda'$ is slowly varying, hence the convergences in \eqref{ConvSubseq} hold uniformly over any $[a,b]\subset\R_+$, cf. Theorem \ref{ThmRegVarUni} (1). Thus for any $\varepsilon>0$ and reals $0<x<y<\infty$ there is some $n^*\in\N$ with $|g(p)-1|\leq\varepsilon$, for all $p\in[p_nx,p_ny],\ n\geq n^*$. Since there is some $N\geq n^*$ such that
\[[p_nx,\infty)=\bigcup_{n\geq N}[p_nx,p_ny],\]
we have for any $p\geq p_nx$ that $|g(p)-1|\leq\varepsilon$, i.e. $\lambda/\lambda'\to1$.

(i) implies (iii): Assume $\lambda_p/\lambda'_p\to1$ and denote by $\FFF^{s,\text{even}}_n$ and $\FFF^{s,\text{odd}}_n$ the experiments that are generated by $(Y_p)_{p\in2\N}$ and $(Y_p)_{p\in2\N+1}$, respectively, such that $\ExpFS=\FFF^{s,\text{even}}_n\otimes\FFF^{s,\text{odd}}_n$. Similarly, obtain the decomposition $\FFF^{s'}_n=\FFF^{s',\text{even}}_n\otimes\FFF^{s',\text{odd}}_n$. Since $\Delta$ satisfies the triangle inequality it suffices to show that both, $\ExpFS$ and $\FFF^{s'}_n$ are asymptotically equivalent to the experiment
\[\EEE_n:=\FFF^{s',\text{even}}_n\otimes\FFF^{s,\text{odd}}_n,\]
which will be shown under the localisation approach of \citet{Grama[2002]}. Let $v_n:=Cr_n\log(n)$ with $C$ specified below and denote for fixed $\Sigma\in\Spar'$ by $\FFF^s_{n,\loc}$ and $\FFF^{s'}_{n,\loc}$ the localisations that are generated by
\begin{align*}
Y_p&\sim\NNN(0,(\Sigma+v_nH)\lambda_p+\tfrac{\eta^2}{n}),\quad p\geq1,\\
Y'_p&\sim\NNN(0,(\Sigma+v_nH)\lambda'_p+\tfrac{\eta^2}{n}),\quad p\geq1,
\end{align*}
respectively, where $H\in B_r(\Sigma):=\{A\in\Rddsym:\|A-\Sigma\|\leq r\}$ is unknown, $r>0$. Similarly introduce the local experiments $\FFF^{s,\text{even}}_{n,\loc},\FFF^{s',\text{even}}_{n,\loc}$ and $\FFF^{s,\text{odd}}_{n,\loc},\FFF^{s',\text{odd}}_{n,\loc}$ that are generated by the even and odd indices, respectively.

Comparing $\FFF^{s,\text{even}}_{n,\loc}$ with $\FFF^{s',\text{even}}_{n,\loc}$ it is evident that whenever $\lambda_p\geq\lambda'_p$ then $Y_p$ is at least as informative as $Y'_p$. To see this consider the equivalent normalisation $\lambda^{-1/2}_pY_p$ and add uninformative noise to match $(\lambda'_p)^{-1/2}Y'_p$ in law. Therefore, without loss of generality we assume that $\lambda_p<\lambda'_p,\ \forall p\in2\N$. By adding uninformative and independent $\NNN(0,\Sigma(\lambda'_p-\lambda_p))$-noise to $Y_p$ we obtain the independent sequence
\[Y''_p\sim\NNN(0,\Sigma\lambda'_p+v_nH\lambda_p+\tfrac{\eta^2}{n}I_d),\ p\in2\N.\]
For $a_n\to0$ such that $a_np_n\to\infty$ let $\pi_n:=[a_np_n,\infty)\cap2\N$. Then Proposition~\ref{PropLeCamProj}, the Hellinger bound \eqref{hellingergauss} and $S^{-1}I_d<\Sigma$ give
\begin{equation}\label{hellbound2}
H^2(\LLL((Y''_p)_{p\in2\N}),\LLL((Y'_p)_{p\in2\N}))\lesssim(\log^2(n)a^{2\delta-1}_n)\vee\Big(\sum_{p\in\pi_n}\frac{v^2_n(\lambda_p-\lambda'_p)^2}{(S^{-1}\lambda_p+\tfrac{\eta^2}{n})^2}\Big).
\end{equation}
By an integral approximation (cf. the proof of Theorem~\ref{ThmRateFisher}) it follows with $\tau(p):=|\lambda(p)-\lambda'(p)|/\lambda(p)=o(1)$ (for $p\to\infty$) that
\begin{align*}
v^2_n\sum_{p\in\pi_n}\frac{(\lambda_p-\lambda'_p)^2}{(S^{-1}\lambda_p+\tfrac{\eta^2}{n})^2}\lesssim&\tau(a_np_n)^2v^2_n\sum_{p\in\pi_n}(S^{-1}+\eta^2/(\lambda_pn))^{-2}\\
=&\OOO(C^2S^{2-1/\delta}\eta^{-2/\delta}r^2\tau(a_np_n)^2\log(n)^2),
\end{align*}
which along with \eqref{lecamhell} and \eqref{hellbound2} implies
\begin{equation}\label{localbound}
\sup_{\Sigma\in\Spar'}\Delta(\FFF^{s,\text{even}}_{n,\loc},\FFF^{s',\text{even}}_{n,\loc})=o(r\tau(a_np_n)\log(n)C^2S^{1-1/2\delta}\eta^{-1/\delta}).
\end{equation}
Analogously to the proof of Proposition~\ref{PropLeCamProj} it is possible to construct a consistent estimator $\hat\Sigma'_n$ of $\Sigma$ in $\FFF^{s,\text{odd}}_n$ to obtain for $A_n=\{\|\hat\Sigma'_n-\Sigma\|\leq v_n\}$
\[\sup_{\Sigma\in\Spar'}P^n_{\Sigma}(A^c_n)=o(1).\]
Since $P^n_{\Sigma}(A^c_n)$ can be controlled uniformly in $\Sigma\in\Spar'$, and since under the event $A_n$ the bound \eqref{localbound} applies, one has $\Delta(\FFF^s_n,\EEE_n)=o(1)$. In the same way $\Delta(\FFF^{s'}_n,\EEE_n)=o(1)$ can be obtained and (i) follows.

(iii) implies (ii): Assume $\Delta(\FFF^s_n,\FFF^{s'}_n)\to0$. Then for $\Sigma\in\Spar'$ fixed also any pair of local sub experiments satisfies $\Delta(\FFF^{s,\text{even}}_{n,\loc}\otimes\FFF^{s,\text{even}}_{n,\loc},\FFF^{s',\text{even}}_{n,\loc}\otimes\FFF^{s',\text{even}}_{n,\loc})\to0$ and therefore, by Proposition~\ref{LAN_gen}, both experiments satisfy the same LAN-expansion with $r_n/r'_n\to1$ and $\III(\Sigma)=\III'(\Sigma)$.
\end{proof}

\section{Proofs of semi-parametric results}
\subsection{Piecewise constant approximation}
\begin{proof}[\textbf{Proof of Proposition~\ref{PropPwc}}]
Let $\ExpMCTwo$ be the statistical experiment that is generated by observing
\[dY_t=X_tdt+\Psi_n(t)dW_t,\quad t\in[0,1],\]
where $\Psi^2_n:=\diag(\eta^2_j/(n_jF'_j))_{1\leq j\leq d}$. Note that the covariance function of $X$ lies in $H^{3/2-\kappa}$, for any $\kappa\in(0,3/2)$ as it has weak derivatives that are continuous on $[0,1]$ except for a single jump. Thus, given that Assumption~\ref{AssRegF}-$F(\gamma,N,\beta)$ is met for $\gamma>\beta$, Theorem~\ref{ThmLeCamDiscCont} gives (with $\beta'=1+\beta$)
\[\Delta(\ExpM,\ExpMCTwo)=\OOO(SMn_{\max}n^{-3/2+\kappa}_{\min}).\]

Next $\Delta(\ExpMC,\ExpMCTwo)=o(1)$ is shown. Introduce the $L^2([0,1],\R^d)$-operators $T_{\Sigma}:f\mapsto \Sigma f,\ R:f\mapsto-\int^1_{\cdot}f(s)ds$ and $R^*:f\mapsto-\int^{\cdot}_0f(s)ds$. Then
\[K'_{\Sigma,n}:=K_{\Sigma}+T_{\Psi^2_n}=R^*T_{\Sigma}R+T_{\Psi^2_n}\]
is the covariance operator in $\ExpMCTwo$. Note that $K^d_{\text{BM}}:=R^*R$ is just the covariance operator of the $d$-dimensional standard Brownian motion. Let further
\[K_{\Sigma,n}:=K_{\Sigma_m}+T_{\Xi^2_{m}}\]
be the covariance operator in $\ExpMC$. Since $T_{\Psi^2_n}-T_{\Xi^2_{m}}$ is not Hilbert-Schmidt in most cases, it is meaningful to consider the one-to-one transformations
\[\tilde K'_{\Sigma,n}:=T_{\Psi^{-1}_n}K'_{\Sigma,n}T_{\Psi^{-1}_n}\quad\text{and}\quad\tilde K_{\Sigma,n}:=T_{\Xi^{-1}_{m}}K_{\Sigma,n}T_{\Xi^{-1}_{m}}.\]
Then \eqref{lecamhilbert} yields the following bound for the Le Cam distance
\begin{equation}\label{LeCamBoundMCMCm}
\Delta(\ExpMC,\ExpMCTwo)\lesssim\sup_{\Sigma\in\Snon}\|(\tilde K'_{\Sigma,n})^{-1/2}(\tilde K'_{\Sigma,n}-\tilde K_{\Sigma,n})(\tilde K'_{\Sigma,n})^{-1/2}\|_{\text{HS}}.
\end{equation}
Let $\tau_n:=n_{\min}\min_{1\leq j\leq d}\min_{t\in[0,1]}F'_j(t)/\eta^2_j$. With $S^{-1}I_d<\Sigma$ and $\tau^{1/2}_nI_d\leq T_{\Psi^{-1}_n}$ we obtain
\begin{equation}\label{covbound}
\tau_n S^{-1}K^d_{\text{BM}}+\Id<\tilde K'_{\Sigma,n}.
\end{equation}
Let further $\varphi_p(t):=\sqrt{2}\sin((p-1/2)\pi t),\ p\geq 1$, be the eigenbasis of $K^1_{\text{BM}}$ and $e_{ip}:=(\1_{\{i=j\}}\varphi_i)_{1\leq j\leq d},\ i=1,\ldots,d,p\geq1,$ be a basis of $L^2([0,1],\R^d)$. For $A=\diag(a_1,\ldots,a_d):[0,1]\to\R^{d\times d}_+$ integration by parts yields
\begin{align*}
|\langle T_AK_{\Sigma}T_Ae_{ip},e_{ip}\rangle|=&\Big|\sum^d_{l=1}\int_{[0,1]^2}\Big(\int^{s\wedge t}_0(\Sigma(u))_{li}du\Big)a_i(s)\varphi_p(s)a_i(t)\varphi_p(t)dsdt\Big|\\
=&\Big|\sum^d_{l=1}\int_{[0,1]}(\Sigma(u))_{li}\Big(-\int^1_ua_i(s)\varphi_p(s)\Big)^2du\Big|\\
=&\langle \Sigma E_{A,ip},E_{A,ip}\rangle.
\end{align*}
Here $E_{A,ip}(t):=-\int^1_tA(s)e_{ip}(s)ds$ satisfies with $\lambda_p:=((p-1/2)\pi)^{-2}$
\begin{equation}\label{supnormbound}
\|E_{A,ip}\|_{\infty}\leq\sqrt{2\lambda_p}\|A\|_{\infty},
\end{equation}
because $s\mapsto-\sqrt{2}\cos(\lambda^{-1/2}_ps)\text{sgn}(\varphi_p(s))\lambda^{1/2}_p$ is the anti-derivative of $|\varphi_p(s)|$.
For $B:=\Psi^{-1}_n$ and $C:=\Xi^{-1}_{m}$ the bound $\|F'_j-F'_{j,m}\|_{\infty}\leq Nm^{-\gamma}$ implies
\begin{equation}\label{noiselevelgap}
\|(B-C)_{jj}\|_{\infty}=n^{1/2}_j\eta^{-1}_j\Big\|\sqrt{F'_j}-\sqrt{F'_{j,m}}\Big\|_{\infty}\leq n^{1/2}_{\max}m^{-\gamma}N\nu^{-1}_j\|(F'_j)^{-1/2}\|_{\infty},
\end{equation}
for any $j=1,\ldots,d$. Moreover
\begin{align}
\nonumber&|\langle (T_BK_{\Sigma}T_B-T_CK_{\Sigma_m}T_C)e_{ip},e_{ip}\rangle|\\
\leq&|\langle \Sigma E_{B,ip},E_{B-C,ip}\rangle|+|\langle (\Sigma-\Sigma_m) E_{B,ip},E_{C,ip}\rangle|+|\langle \Sigma_m E_{C,ip},E_{B-C,ip}\rangle|\label{covopbound}
\end{align}
Now by $\|C\|_{\infty}\leq\|B\|_{\infty}\leq n^{1/2}_{\max}\max_{1\leq j\leq d}\|(F'_j)^{1/2}\|_{\infty}/\eta^2_j$, \eqref{supnormbound} and \eqref{noiselevelgap}
\begin{align}
|\langle \Sigma E_{B,ip},E_{B-C,ip}\rangle|+|\langle \Sigma_m E_{C,ip},E_{B-C,ip}\rangle|&\lesssim Sn_{\max}m^{-\gamma}N\lambda_p,\label{partialone}\\
|\langle (\Sigma-\Sigma_m) E_{B,ip},E_{C,ip}\rangle|&\lesssim n_{\max}m^{-\beta}M\lambda_p,\label{partialtwo}
\end{align}
uniformly in $\Sigma\in\Snon$ and $p\geq1$, where the Sobolev-bound $\sup_{\Sigma\in\Snon}\|\Sigma-\Sigma_m\|_{L^2(\R^{d\times d})}=\OOO(Mm^{-\beta})$ has been used, cf. the proof of Theorem~\ref{ThmLeCamDiscCont}. Thus \eqref{covopbound}, \eqref{partialone}, \eqref{partialtwo} and $\lambda_p=\langle K^d_{\text{BM}}e_{ip},e_{ip}\rangle$ imply
\begin{equation}\label{boundabove}
|\langle(\tilde K'_{\Sigma,n}-\tilde K_{\Sigma,n})e_{ip},e_{ip}\rangle|\lesssim S(N\vee M)n_{\max}m^{-(\beta\wedge\gamma)}\langle K^d_{\text{BM}}e_{ip},e_{ip}\rangle
\end{equation}
By \eqref{covbound} and \eqref{boundabove} the right-hand side of \eqref{LeCamBoundMCMCm} can be bounded by
\begin{equation}\label{partialthree}
S(N\vee M)n_{\max}m^{-(\beta\wedge\gamma)}\tau^{-1}_n\|(S^{-1}K^d_{\text{BM}}+\tau^{-1}_n\Id)^{-1}K^d_{\text{BM}}\|_{HS}.
\end{equation}
Applying $(\tau_nS^{-1}K^d_{\text{BM}}+\Id)^{-1}K^d_{\text{BM}}$ to the basis $(e_{ip})_{i=1,\ldots,d;p\geq1}$ yields
\begin{equation}\label{partialfour}
\|(S^{-1}K^d_{\text{BM}}+\tau^{-1}_n\Id)^{-1}K^d_{\text{BM}}\|^2_{\text{HS}}=d\sum^{\infty}_{p=1}(S^{-1}+\lambda^{-1}_p\tau^{-1}_n)^{-2}=\OOO(\tau^{1/2}_n),
\end{equation}
cf. the proof of Theorem~\ref{ThmRateFisher} for $\delta=2$. Then $\Delta(\ExpMC,\ExpMCTwo)=\OOO(Sn^{1/4}_{\max}(Mm^{-\beta}\vee Nm^{-\gamma}))$ follows by \eqref{LeCamBoundMCMCm}, \eqref{partialthree} and \eqref{partialfour}.
\end{proof}

\subsection{Proof of Proposition~\ref{PropLANequi}}
Denote by $\ExpMS=\Big\{Q^n_{\Sigma}:\Sigma\in\Snon\Big\}$, $\ExpMSS:=\Big\{R^n_{\Sigma}:\Sigma\in\Snon\Big\}$ and $\ExpMSSTwo:=\Big\{R^{n'}_{\Sigma}:\Sigma\in\Snon\Big\}$ the statistical experiments that are generated by \eqref{SeqSemi}, $(S_{pk})_{p\geq 0,k}$ and $(S_{pk})_{p\geq 1,k}$, respectively, where $S_{pk}$ is as in \eqref{SeqSemiTwo}. For $\Sigma\in\Snon$ fixed and $r>0$ denote the corresponding localisations by
\[\ExpMSSloc=\Big\{R^n_{\Sigma+n^{-1/4}_{\min}H}:H\in B^{\beta}_{r,n}(\Sigma)\Big\},\quad
\ExpMSSTwoloc=\Big\{R^{n'}_{\Sigma+n^{-1/4}_{\min}H}:H\in B^{\beta}_{r,n}(\Sigma)\Big\},\]
\[\ExpMSloc=\Big\{Q^n_{\Sigma+n^{-1/4}_{\min}H}:H\in B^{\beta}_{r,n}(\Sigma)\Big\},\]
respectively, where $B^{\beta}_{r,n}(f):=\{f+n^{-1/4}_{\min}g\in\HBsym:\|g\|_{\infty}\leq r\}$. Then, by $\Delta(\ExpM,\ExpMC)=o(1)$ and $\Delta(\ExpMC,\ExpMSS)=0$, Proposition~\ref{PropLANequi} follows if
\begin{equation}\label{LeCamSeqs}
\Delta\Big(\ExpMSSloc,\ExpMSloc\Big)=o(1).
\end{equation}
In fact, \eqref{LeCamSeqs} is implied by the following two Lemmas, given $m=o(\sqrt{n_{\min}})$.

\begin{lemma}
For any $r>0$ it holds that
\[\Delta(\ExpMSSloc,\ExpMSSTwoloc)=\OOO\Big(rmn^{-1/2}_{\min}\Big).\]
\end{lemma}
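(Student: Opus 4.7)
The plan is as follows. Since $R^{n'}_\Sigma$ is the marginal of $R^n_\Sigma$ on $(S_{pk})_{p\geq 1,k}$, the coordinate-projection kernel yields $\delta(\ExpMSSloc,\ExpMSSTwoloc)=0$ immediately, so the task reduces to bounding the reverse deficiency $\delta(\ExpMSSTwoloc,\ExpMSSloc)$. For this I construct a Markov kernel $K$ that, given an observation $(S_{pk})_{p\geq 1,k}$, appends a simulated block $(\tilde S_{0k})_k$ drawn from the Gaussian conditional distribution $\LLL_\Sigma((S_{0k})_k\mid (S_{pk})_{p\geq 1,k})$ evaluated at the \emph{centre} $\Sigma$ of the localisation. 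Crucially this kernel does not depend on the local perturbation, and joint Gaussianity of all spectral coefficients (with $\Xi_m$ piecewise constant on the blocks) yields an explicit formula for the conditional law.

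By the coupling inequality together with \eqref{lecamhell}, the total-variation distance produced by $K$ is dominated by
\[
\sup_{H}\E_{R^{n'}_{\Sigma+n^{-1/4}_{\min}H}}\bigl[\mathrm{Hell}\bigl(\LLL_\Sigma(S_{0\cdot}\mid S_{\cdot\cdot}),\LLL_{\Sigma+n^{-1/4}_{\min}H}(S_{0\cdot}\mid S_{\cdot\cdot})\bigr)\bigr],
\]
the supremum taken over perturbations $H$ of size $r$, with $\mathrm{Hell}$ denoting the Hellinger distance. Both conditional laws are centred Gaussians (up to their linear conditional means) on an $md$-dimensional space, so applying the Gaussian Hellinger bound \eqref{hellingergauss} reduces the problem to estimating the relative perturbations of the conditional mean map and the conditional covariance induced by $\Sigma\to\Sigma+n^{-1/4}_{\min}H$.

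For the structural analysis I exploit the decomposition $(B_t-B_{k/m})_{t\in\I_{mk}}=(t-k/m)m(B_{(k+1)/m}-B_{k/m})+b_k(t)$ of the local Brownian motion into its endpoint-increment part (independent across blocks) and an independent Brownian bridge $b_k$. The observations $(S_{pk})_{p\geq 1,k}$ couple endpoint and bridge components through the explicit Fourier coefficients of $\varphi_{pk}$, while $(S_{0k})_k$ depends---modulo noise of order $n^{-1/2}_{\min}$---affinely on the scaled endpoint increments $\Sigma^{1/2}(j/m)(B_{(j+1)/m}-B_{j/m})$. Consequently the conditional covariance of $(S_{0k})_k$ given $(S_{pk})_{p\geq 1,k}$ is tridiagonal in $k$ with entries of order $\Sigma/m^2$ in the signal-dominated regime $m=o(\sqrt{n_{\min}})$, and the localisation enters multiplicatively at order $n^{-1/4}_{\min}r$.

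The main obstacle is sharp bookkeeping. A naive per-block application of \eqref{hellingergauss} sums $m$ squared-Hellinger contributions of order $(n^{-1/4}_{\min}r)^2$ and only gives the weaker rate $\OOO(r\sqrt{m}/n_{\min}^{1/4})$. The heuristic reason the sharp rate $\OOO(rm/\sqrt{n_{\min}})$ must hold is that the additional Fisher information contributed by $(S_{0k})_k$ is only $\OOO(m)$ against the background $\OOO(\sqrt{n_{\min}})$ produced by $(S_{pk})_{p\geq 1,k}$, so the natural Le Cam ratio is $m/\sqrt{n_{\min}}$. Converting this heuristic into a rigorous estimate requires exploiting the cancellations implicit in the tridiagonal conditional-covariance perturbation and the chain-like dependence of $X^m_{k/m}$ on past block increments, which is the technical heart of the argument.
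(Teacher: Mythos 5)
Your high-level strategy matches the paper's: drop the $p=0$ coordinate to get $\delta(\ExpMSSloc,\ExpMSSTwoloc)=0$, then bound the reverse deficiency by a kernel that simulates $(S_{0k})_k$ from $(S_{pk})_{p\geq1,k}$ via the Gaussian conditional law evaluated at the centre $\Sigma$, and control the resulting total variation by the Gaussian Hellinger bound \eqref{hellingergauss} applied to the conditional laws. The paper realises this through the anti-derivative identities $\Phi_{0,0}=2\sum_p\Phi_{p,0}$ and $\Phi_{0,k}=\sum_p(-1)^{p+1}\Phi_{p,k-1}+\Phi_{p,k}$, which express $S_{0k}$ through the $p\geq1$ coefficients plus independent noise; your endpoint-increment/Brownian-bridge decomposition is the same idea in different clothing.

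However, you do not actually prove the bound: the argument is deferred at precisely the step where the lemma lives, replaced by a Fisher-information heuristic and a claim that "cancellations" in the tridiagonal conditional covariance must be exploited. The paper instead computes everything explicitly: the conditional-mean map $M^H_{pk}=\Sigma^H_{pk}(\Sigma^H_{pk}+\Xi^2_{m,k})^{-1}$, the conditional covariance $V^H_{pk}$, the tridiagonal block matrix $K^H$, the lower bound $K^0\gtrsim n^{-1/2}_{\min}m^{-1}I_{dm}$, and then the per-block bounds $\E[\|m^H_k-m^0_k\|^2]\lesssim r^2/(n_{\min}m)$ and $\|K^H-K^0\|^2_{\text{HS}}\lesssim r^2n^{-2}_{\min}$, which are fed directly into \eqref{hellingergauss}. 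Your diagnosis of the rate gap is also off: the paper's own computation gives $\OOO(r\sqrt{m}n^{-1/4}_{\min})$ for the conditional-mean contribution, i.e.\ exactly the "naive" per-block rate you dismiss, with no cancellation argument attempted or needed; only the conditional-covariance contribution matches the $\OOO(rmn^{-1/2}_{\min})$ rate in the statement. Both bounds are $o(1)$ whenever $m=o(\sqrt{n_{\min}})$, so Proposition~\ref{PropLANequi} is unaffected, but the step you declare to be the technical heart is a direct per-block estimate, not a cancellation argument, and in a blind submission it must actually be carried out.
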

\begin{proof}
The anti-derivatives $\Phi_{p,k}(t)=-\int^1_t\varphi_{p,k}(s)ds$ satisfy
\[\Phi_{0,0}=2\sum^{\infty}_{p=1}\Phi_{p,0},\quad \Phi_{0,k}=\sum^{\infty}_{p=1}(-1)^{p+1}\Phi_{p,k-1}+\Phi_{p,k}\]
and the signals $X_{0,k}$ of $S_{0,k},\ k=0,\ldots,m-1,$ can be represented by
\[X^H_{0,0}:=2\sum^{\infty}_{p=1}X^H_{p,0},\quad X^H_{0,k}:=\sum^{\infty}_{p=1}(-1)^{p+1}X^H_{p,k-1}+X^H_{p,k}\]
with $X^H_{p,k}=(-\int^1_0E^{\top}_{pkj}(t)(\Sigma^H)^{1/2}_m(t)dB_t)_{1\leq j\leq d}$. Note that $\LLL(X^H_{pk}|S_{pk})=\NNN\Big(M^H_{pk}S_{pk},V^H_{pk}\Big)$ with
\[M^H_{pk}:=\Sigma^H_{pk}(\Sigma^H_{pk}+\Xi^2_{m,k})^{-1},\quad V^H_{pk}:=((\Sigma^H_{pk})^{-1}+\Xi^{-2}_{m,k})^{-1},\]
\[\Sigma^H_{pk}:=(\Sigma+n^{-1/4}_{\min}H)(k/m)\lambda_{pm},\]
where $\Xi^2_{m,k}:=\Xi^2_m(k/m)$. For an i.i.d. sequence $Z_{pk}\sim\NNN(0,I_d),\ k=0,\ldots,m-1,p\geq1$, independent of $(S_{pk})_{p\geq1,k}$, construct the signals $X^0_{p,k}:=M^0_{pk}S_{pk}+(V^0_{pk})^{1/2}Z_{pk}$ and create the corresponding $X^0_{0,k}$ by plugging in via
\[X^0_{0,0}:=2\sum^{\infty}_{p=1}X^0_{p,0},\quad X^0_{0,k}:=\sum^{\infty}_{p=1}(-1)^{p+1}X^0_{p,k-1}+X^0_{pk}.\]
With an independent $d$-dimensional Brownian motion $W'$ set
\[S'_{0,k}:=X^0_{0,k}+\Big(\int^1_0\varphi^{\top}_{0,k}(t)\Xi_m(t)dW'_t\Big)_{1\leq j\leq d},\quad k=1,\ldots,m-1.\]
Conditioned on $(S_{pk})_{p\geq1,k}$ the expectation of $S_{0,k}$ is given by
\[m^H_0:=2\sum^{\infty}_{p=1}M^H_{p,0}S_{p,0},\quad m^H_k:=\sum^{\infty}_{p=1}(-1)^{p+1}M^H_{p,k-1}S_{p,k-1}+M^H_{pk}S_{pk},\]
for $k=1,\ldots,m-1$, and the conditional covariance $K^H$ of $(S_{0,k})_{k=0,\ldots,m-1}$ is a $\R^{dm\times dm}$-triangular block matrix with block diagonal
\begin{align*}
K^H_{0,0}&:=2\sum^{\infty}_{p=1}V^H_{p,0}+\Xi^2_{m,0},\\
K^H_{k,k}&:=\sum^{\infty}_{p=1}V^H_{p,k-1}+V^H_{pk}+\frac{1}{2}(\Xi^2_{m,k-1}+\Xi^2_{m,k}),\quad k=1,\ldots,m-1,
\end{align*}
and lower and upper block diagonal
\begin{align*}
K^H_{0,1}&:=2\sum^{\infty}_{p=1}(-1)^{p+1}V^H_{p,0}+\frac{1}{\sqrt{2}}\Xi^2_{m,0},\\
K^H_{k,k+1}&:=\sum^{\infty}_{p=1}(-1)^{p+1}V^H_{pk}-\frac{1}{2}\Xi^2_{m,k},\quad k=1,\ldots,m-2.
\end{align*}
For $S'_{0,k}$ the conditional mean and covariance are given by $m^0_k$ and $K^0$, respectively. Regular variation of $\lambda_{pm}$ as in the proof of Theorem~\ref{ThmRateFisher} yield
\[\sum^{\infty}_{p=1}V_{pk}=\sum^{\infty}_{p=1}((\Sigma^H_{pk})^{-1}+\Xi^{-2}_{m,k})^{-1}\sim\frac{1}{\sqrt{n_{\min}}m}I_d,\quad \sum^{\infty}_{p=1}(-1)^{p+1}V_{pk}\sim I_d,\]
hence $n^{-1/2}_{\min}m^{-1}I_{dm}\lesssim K^0$. Let $S:=(S_{pk})_{k=0,\ldots,m-1,p\geq0}$ and $S':=(S'_{0,k})_{k=0,\ldots,m-1}\cup(S_{pk})_{k=0,\ldots,m-1,p\geq1}$. Then conditioning on $(S_{pk})_{p\geq1,k}$ along with \eqref{hellingergauss} yields
\begin{align}
\nonumber H^2(\LLL(S'),\LLL(S'))&=\E_{p\geq1,k}[H^2(\LLL(S|(S_{pk})_{p\geq1,k}),\LLL(S'|(S_{pk})_{p\geq1,k}))]\\
\label{hellbound3}&\lesssim\sqrt{n_{\min}}m\E_{p\geq1,k}[\|m^H-m^0\|^2]+n_{\min}m^2\|K^H-K^0\|^2_{\text{HS}}
\end{align}
If the bounds $S^{-1}I_d<\Sigma<SI_d$ and the same calculations as for Theorem~\ref{ThmRateFisher} are used it is not hard to see that for any $k=0,\ldots,m-1$
\begin{align*}
\E_{p\geq1,k}[\|m^H_k-m^0_k\|^2]&\lesssim\sum^{\infty}_{p=1}\|(M^H_{p,k}-M^0_{p,k})^{\top}(\Sigma^H_{pk}+\Xi^2_{m,k})^{1/2}\|^2_{\text{HS}}\\
&\lesssim r^2n^{-3/2}_{\min}\sum^{\infty}_{p=1}\|\lambda^{1/2}_{pm}(\Sigma_{p,k}+\Xi^2_{m,k})^{-1/2}\|^2_{\text{HS}}\lesssim \frac{r^2}{n_{\min}m},
\end{align*}
uniformly in $H\in B^{\beta}_{r,n}$, as well as
\begin{align*}
\|K^H-K^0\|^2_{HS}&\lesssim r^2\sum^{m-1}_{k=0}\sum^{\infty}_{p=1}\|V^H_{pk}-V^0_{pk}\|^2_{\text{HS}}\\
&\lesssim n^{-5/2}_{\min}r^2\sum^{m-1}_{k=0}\sum^{\infty}_{p=1}\|\lambda_{pm}(\Sigma_{m,k}\nu_p+\Xi^2_{m,k})\|^2_{\text{HS}}\lesssim r^2n^{-2}_{\min},
\end{align*}
uniformly in $H\in B^{\beta}_{r,n}$. Thus \eqref{lecamhell} and \eqref{hellbound3} yield the claim.
\end{proof}

\begin{lemma}
For any $r>0$ it holds that
\[\Delta(\ExpMSSTwoloc,\ExpMSloc)=\OOO(r\xi_n),\]
where $\xi_n:=\max_{1\leq j,j'\leq d}|n_{\min}/\sqrt{\nu_j\nu_{j'}}-\sqrt{n_jn_{j'}}|=o(1)$.
\end{lemma}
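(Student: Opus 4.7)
Both experiments consist of independent zero-mean Gaussian observations that differ only through the noise covariance. Write $\Sigma^H:=\Sigma+n_{\min}^{-1/4}H$; then $\ExpMSSTwoloc$ gives $S_{pk}\sim\NNN(0,A_{pk})$ with
\[A_{pk}=\Sigma^H(k/m)\lambda_{mp}+\Xi_{m,k}^2,\quad\Xi_{m,k}^2=\diag(\eta_j^2/(n_jF'_j(k/m)))_{1\leq j\leq d},\]
while $\ExpMSloc$ gives $Y_{pk}\sim\NNN(0,B_{pk})$ with $B_{pk}=\Sigma^H(k/m)\lambda_{mp}+n_{\min}^{-1}\Xi^2(k/m)$, for $p\geq 1$, $k=0,\ldots,m-1$. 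A direct term-by-term Hellinger comparison of $A_{pk}$ and $B_{pk}$ is doomed: for $p\gg\sqrt{n_{\min}}/m$ both covariances are essentially pure noise with a fixed-of-order mismatch in their noise parts, so the sum over $p\geq 1$ of the resulting Hellinger contributions diverges.

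The remedy is to first align the (deterministic, known) noise levels via a Markov kernel. Let $T_{m,k}:=\diag(\sqrt{\nu_jn_j/n_{\min}})_{1\leq j\leq d}$ and let $K$ be the deterministic Markov kernel acting by $(S_{pk})_{p,k}\mapsto(T_{m,k}S_{pk})_{p,k}$. By direct computation $T_{m,k}\Xi_{m,k}^2T_{m,k}^\top=n_{\min}^{-1}\Xi^2(k/m)$, so $T_{m,k}S_{pk}\sim\NNN(0,B_{pk}+D_{pk})$ with
\[D_{pk}:=\big(T_{m,k}\Sigma^H(k/m)T_{m,k}^\top-\Sigma^H(k/m)\big)\lambda_{mp}.\]
The $(i,j)$-entry of $T_{m,k}\Sigma^H(k/m)T_{m,k}^\top-\Sigma^H(k/m)$ equals $(\sqrt{\nu_i\nu_jn_in_j}/n_{\min}-1)\Sigma^H(k/m)_{ij}$, and
\[|\sqrt{\nu_i\nu_jn_in_j}-n_{\min}|=\sqrt{\nu_i\nu_j}\,|\sqrt{n_in_j}-n_{\min}/\sqrt{\nu_i\nu_j}|\leq\xi_n,\]
so together with $\|\Sigma^H\|_{\infty}\leq S+r$ one gets $\|D_{pk}\|_{\text{HS}}\lesssim(S+r)\xi_n\lambda_{mp}/n_{\min}$. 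Crucially, the matching of the noise covariances has left a residual carrying the (summable) spectral weight $\lambda_{mp}$ rather than a constant.

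Combining tensorisation of the squared Hellinger distance with \eqref{lecamhell}--\eqref{hellingergauss} and the lower bound $B_{pk}\geq(S^{-1}\lambda_{mp}+c/n_{\min})I_d$ (for some $c>0$ depending on $\eta_j,F'_j$), we obtain
\[\Delta(\ExpMSSTwoloc,\ExpMSloc)^2\lesssim\sum_{k=0}^{m-1}\sum_{p\geq 1}\|B_{pk}^{-1/2}D_{pk}B_{pk}^{-1/2}\|_{\text{HS}}^2\lesssim\frac{(S+r)^2\xi_n^2\,m}{n_{\min}^2}\sum_{p\geq 1}\frac{\lambda_{mp}^2}{(S^{-1}\lambda_{mp}+c/n_{\min})^2}.\]
The inner $p$-sum is handled by the same integral / regular-variation argument as in the proof of Theorem~\ref{ThmRateFisher} (with $\delta=2$): both ranges $p\lesssim\sqrt{n_{\min}}/m$ and $p\gtrsim\sqrt{n_{\min}}/m$ contribute $O(\sqrt{n_{\min}}/m)$, yielding an inner sum of order $\sqrt{n_{\min}}/m$. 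Hence $\Delta\lesssim(S+r)\xi_n/n_{\min}^{3/4}=O(r\xi_n)$, as required. The main obstacle is the identification of the correct Markov kernel that exactly cancels the known noise-level discrepancy; once this is done, the remaining signal-side residual $D_{pk}$ scales with $\lambda_{mp}$, rendering the final summation a routine variant of the spectral calculations already performed in the parametric analysis.
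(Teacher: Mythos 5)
Your proof is correct, and it takes a genuinely simpler route than the paper's. The paper achieves the matching of the two experiments by a two--stage Markov kernel: it rescales $Y_{pk}$ by $K_n^{-1}$, adds independent noise $(G(k/m)W_n)^{1/2}Z_{pk}$ to $S_{pk}$ (and rescales) so that the noise parts of the covariances coincide, and then adds a further signal--level noise $(\lambda_{mp}K_n'''\Sigma(k/m)K_n''')^{1/2}Z_{pk}'$ to cancel the \emph{known} $\Sigma$-part of the residual as well. The net covariance difference (their display \eqref{covdifference}) thus carries only the local perturbation $H$ and hence a factor $n_{\min}^{-1/4}$, which after tensorisation yields exactly $\OOO(r\xi_n)$. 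You instead observe that the single deterministic rescaling $T_n=\diag(\sqrt{\nu_jn_j/n_{\min}})$ already matches the noise covariances \emph{exactly} ($T_n\Xi_{m,k}^2T_n^\top=n_{\min}^{-1}\Xi^2(k/m)$), and you do not bother cancelling the $\Sigma$-part. Your residual $D_{pk}$ then carries the full $\Sigma^H$ rather than only $n_{\min}^{-1/4}H$, but it still comes with the summable weight $\lambda_{mp}$ and the factor $\xi_n/n_{\min}$, so the integral approximation (the same $\delta=2$ calculation from Theorem~\ref{ThmRateFisher}) gives $\Delta\lesssim(S+r)\xi_n/n_{\min}^{3/4}$. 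For fixed $r$ this is in fact a strictly stronger bound than the paper's $\OOO(r\xi_n)$ (one gains a factor $n_{\min}^{-3/4}$), though it does not display the linear-in-$r$ scaling; in the only place the lemma is used (the proof of Proposition~\ref{PropLANequi}) one needs $\Delta\to0$ for fixed $r$, so this is harmless. In short: you trade the paper's elaborate noise-addition kernel (needed to isolate the $H$-residual and get the clean $r\xi_n$ form) for a one-line rescaling, at the cost of a constant $S$ in place of the $n_{\min}^{-1/4}r$ factor, and the Hellinger summation is a routine variant of what is already done in the parametric part.
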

\begin{proof}
Let $J:=\{j:(K_n)_{jj}\geq(K'_n)_{jj}\}$. Then, for $k=1,\ldots,m,\ p\geq1$, the observations in $\ExpMSloc$ and $\ExpMSSTwoloc$ are given by
\begin{align*}
Y_{pk}\sim&\NNN\Big(0,\lambda_{mp}(\Sigma+n^{-1/4}_{\min}H)(k/m)+G(k/m)K^2_n\Big),\\
S_{pk}\sim&\NNN\Big(0,\lambda_{mp}(\Sigma+n^{-1/4}_{\min}H)(k/m)+G(k/m)(K'_n)^2\Big),
\end{align*}
respectively, where $K^2_n:=\diag(\nu_j/n_{\min})_{1\leq j\leq d},\ (K'_n)^2:=\diag(n^{-1}_j)_{1\leq j\leq d}$ and $G:=\diag(\eta^2_j/F'_{j,m})_{1\leq j\leq d}$. As in the proof of Proposition~\ref{PropPwc} consider equivalent one-to-one (covariance) transformations. More precisely, set
\[Y'_{pk}:=K^{-1}_nY_{pk},\quad k=1,\ldots,m,\ p\geq1,\]
which generates an experiment $\ExpMSTwoloc$ equivalent to $\ExpMSloc$. For i.i.d. vectors $Z_{pk}\sim\NNN(0,I_d),\ k=1,\ldots,m,\ p\geq1$, independent of $(S_{pk})_{k=1,\ldots,m,\ p\geq1}$,
set
\[S'_{pk}:=(K''_n)^{-1}(S_{pk}+(G(k/m)W_n)^{1/2}Z_{pk}),\quad k=1,\ldots,m,\ p\geq1\]
where
\[W_n:=\diag((K^2_n-(K'_n)^2)_{jj}\1(j\in J))_{1\leq j\leq d}.\]
and where
\[K''_n:=(K')^2_n+W_n=\diag((K^2_n)_{jj}\1(j\in J)+((K'_n)^2)_{jj}\1(j\in J^c))_{1\leq j\leq d}.\]
Take a further i.i.d. sequence $Z'_{pk}\sim\NNN(0,I_d),\ k=1,\ldots,m,\ p\geq1$, independent of $((S_{pk},Z_{pk}))_{p\geq1,k}$. With $K'''_n:=K^{-1}_n-(K''_n)^{-1}\geq0$ set
\[S''_{pk}:=S'_{pk}+(\lambda_{mp}K'''_n\Sigma(k/m)K'''_n)^{1/2}Z'_{pk},\quad k=1,\ldots,m,\ p\geq1.\]
Note that $\mu_{n,H}:=\LLL((Y'_{pk})_{p\geq 1,k})$ and $\mu'_{n,H}:=\LLL((S''_{pk})_{p\geq 1,k})$ are Gaussian product measures with
\[Y'_{pk}\sim\NNN(0,\lambda_{mp}K^{-1}_n\Sigma^H(k/m)K^{-1}_n+G(k/m)),\]
and
\begin{align}
\nonumber\cov(Y'_{pk})-\cov(S''_{pk})=n^{-1/4}_{\min}\lambda_{mp}\Big(&K^{-1}_nH(k/m)K^{-1}_n\\
&-(K''_n)^{-1}H(k/m)(K''_n)^{-1}\Big).\label{covdifference}
\end{align}
Set $\nu_{\max}:=\max_{1\leq j\leq d}\nu_j,\ \eta_{\min}:=\min_{1\leq j\leq d}\eta_j$ and $F'_{\max}:=\max_{1\leq j\leq d}\max_{t\in[0,1]}F'_j(t)$. Then by \eqref{lecamhell}, \eqref{hellingergauss} and \eqref{covdifference} one easily gets
\begin{equation}\label{tvbound}
\sup_H\|\mu_{n,H}-\mu''_{n,H}\|^2_{\text{TV}}\lesssim n^{-1/2}_{\min}m\xi^2_nr^2d\sum_{p\geq1}\Big(\frac{S^{-1}}{\nu_{\max}}+\frac{\eta^2_{\min}}{\lambda_{mp}n_{\min}F'_{\max}}\Big)^{-2}.
\end{equation}
The sum on the right-hand side of \eqref{tvbound} can be approximated by an integral, which is of order $\OOO(\sqrt{n_{\min}}/m)$ (by the substitution $x=pm/\sqrt{n_{\min}}$ and Remark~\ref{RmkFisherExp}). This gives $\sup_H\|\mu_{n,H}-\mu''_{n,H}\|_{\text{TV}}=\OOO(\xi_nr)$, hence
\[\delta(\ExpMSSTwoloc,\ExpMSloc)\leq\delta(\ExpMSSTwoloc,\ExpMSTwoloc)=\OOO(\xi_nr).\]
Finally, proceed analogously to obtain $\delta(\ExpMSloc,\ExpMSSTwoloc)=\OOO(\xi_nr)$.
\end{proof}
\bibliographystyle{plainnat}
\bibliography{mybib}
\end{document}